\documentclass[11pt]{amsart}
\usepackage{amssymb,amsmath,amsfonts,amscd,euscript}
\usepackage{hyperref}
\newcommand{\nc}{\newcommand}

\numberwithin{equation}{section}
\newtheorem{thm}{Theorem}[section]
\newtheorem{prop}[thm]{Proposition}
\newtheorem{lem}[thm]{Lemma}
\newtheorem{cor}[thm]{Corollary}
\theoremstyle{remark}
\newtheorem{rem}[thm]{Remark}
\newtheorem{definition}[thm]{Definition}

\nc{\gl}{\mathfrak{gl}}
\nc{\mosp}{\mathfrak{osp}}
\nc{\msl}{\mathfrak{sl}}
\nc{\mslth}{\widehat{\msl}_2}
\nc{\GL}{\mathfrak{GL}}
\nc{\g}{\mathfrak{g}}
\nc{\gh}{\widehat\g}
\nc{\h}{\mathfrak{h}}
\nc{\la}{\lambda}
\nc{\al}{\alpha }
\nc{\be}{\beta }
\nc{\ve}{\varepsilon }
\nc{\om}{\omega }

\nc{\ta}{\theta}
\nc{\ch}{{\mathop {\rm ch}}}
\nc{\Tr}{{\mathop {\rm Tr}\,}}
\nc{\Id}{{\mathop {\rm Id}}}
\nc{\ad}{{\mathop {\rm ad}}}
\nc{\bra}{\langle}
\nc{\ket}{\rangle}
\nc{\x}{{\bf x}}
\nc{\bm}{{\bf m}}
\nc{\bs}{{\bf s}}
\nc{\ba}{{\bf a}}
\nc{\bb}{{\bf b}}
\nc{\bk}{{\bf k}}
\nc{\bp}{{\bf p}}
\nc{\pa}{\partial}
\nc{\ld}{\ldots}
\nc{\cd}{\cdots}
\nc{\hk}{\hookrightarrow}
\nc{\T}{\otimes}
\nc{\gr}{\mathrm{gr}}
\nc{\ov}{\overline}
\nc{\cO}{\mathcal O}
\nc{\mgl}{\mathfrak{gl}}
\nc{\U}{\mathrm U}
\nc{\V}{\EuScript V}
\nc{\cL}{\mathcal{L}}
\nc{\Res}{\mathrm{Res\ }}

\newcommand{\bC}{{\mathbb C}}

\newcommand{\bZ}{{\mathbb Z}}

\newcommand{\fh}{{\mathfrak h}}

\newcommand{\fg}{{\mathfrak g}}

\newcommand{\fn}{{\mathfrak n}}

\newcommand{\qb}[2]{\binom{#1}{#2}_{\! q}}
\newcommand{\qsb}[2]{\binom{#1}{#2}_{\! q^2}}

\begin{document}
\title[]
{Weyl modules for $\mosp(1,2)$ and nonsymmetric Macdonald polynomials}

\author{Evgeny Feigin}
\address{Evgeny Feigin:\newline
Department of Mathematics,\newline
National Research University Higher School of Economics,\newline
Vavilova str. 7, 117312, Moscow, Russia,\newline
{\it and }\newline
Tamm Theory Division, Lebedev Physics Institute
}
\email{evgfeig@gmail.com}
\author{Ievgen Makedonskyi}
\address{Ievgen Makedonskyi:\newline
Department of Mathematics, \newline
National Research University Higher School of Economics, AG Laboratory, \newline
Vavilova str. 7, 117312, Moscow, Russia
{\it and }\newline
Departments of Mechanics and Mathematics
Kiev Shevchenko National University
Vladimirskaya, 64, Kiev, Ukraine.
}
\email{}

\begin{abstract}
The main goal of our paper is to establish a connection between the Weyl modules of the current Lie 
superalgebras (twisted and untwisted) attached to $\mosp(1,2)$ and the nonsymmetric Macdonald polynomials of types $A_2^{(2)}$
and $A_2^{(2)\dagger}$. We compute the dimensions and construct bases of the Weyl modules. 
We also derive explicit formulas for the $t=0$ and $t=\infty$ specializations
of the nonsymmetric Macdonald polynomials. We show that the specializations can be described in terms
of the Lie superalgebras action on the Weyl modules.
\end{abstract}

\maketitle

\section*{Introduction}
The Weyl modules play important role in the representation theory of infinite-dimensional
Lie algebras (see \cite{CP,CL,FoLi1,FeLo1}). These representations pop up in various problems of representation theory 
and its applications \cite{CFK,FeLo2,FoLi1,FoLi2,SVV}.
The most important for us is the established in many cases isomorphism between the Weyl modules and Demazure modules
(at least, in types $A,D,E$)
for integrable representations of the affine Kac-Moody algebras. The Demazure modules are known to provide
finite-dimensional approximation of the infinite-dimensional modules of the affine Kac-Moody Lie algebras and hence so do the 
Weyl modules.
Yet another consequence from the link between the Demazure and Weyl modules is that the characters of both (in the
level one case) can be expressed in terms of the nonsymmetric Macdonald polynomials (see \cite{I,S}).
The nonsymmetric Macdonald polynomials (see \cite{M1,M2}) are rational functions in parameters $q$ and $t$ and 
the characters of the Demazure mdoules are equal to the $t=0$ specialization.  
It was conjectured recently (see \cite{CO1,CO2,FM}) that the $t=\infty$ specialization also has representation theoretic
realization in terms of the PBW filtration (see \cite{FFL1,FFL2} for the case of simple Lie algebras).   
 
Now let us turn to the case of superalgebras. It is not clear what should be an appropriate definition of a Demazure module
for superalgebras. However, the Weyl modules are perfectly well defined (see e.g. \cite{CLS}). So there are two natural questions
here. The first one is to compute the characters of the Weyl modules for affine superalgebras and to find a connection
with some super analogues of the nonsymmetric Macdonald polynomials. The second question is to figure out if a limit
of the Weyl modules (when the highest weight grows) does exist. In this paper we consider the smallest Lie superalgebra
$\mosp(1,2)$, which plays in the super theory a role similar to that of the Lie algebra $\msl_2$ in the theory
of classical simple Lie algebras. The Weyl modules in this case are parametrized by a non-negative integer $n$;
we denote the corresponding $\mosp(1,2)\T\bC[t]$ module by $W_{-n}$. We prove the following theorem.
\begin{thm}
$W_{-n}$ as $\mosp(1,2)$-module is isomorphic to the tensor product of $n$ copies of $3$-dimensional irreducible $\mosp(1,2)$-module.
Moreover, the  $\mosp(1,2)\T\bC[t]$-module structure is given by the graded tensor product (fusion product) construction.
\end{thm}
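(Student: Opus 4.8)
The strategy is to sandwich $\dim W_{-n}$ between $3^n$ and $3^n$ and to read both assertions off the resulting isomorphisms.

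For the lower bound I would fix pairwise distinct scalars $a_1,\dots,a_n\in\bC$ and form the tensor product $V=V(1)(a_1)\T\cdots\T V(1)(a_n)$ of evaluation modules, where $V(1)$ denotes the $3$-dimensional irreducible $\mosp(1,2)$-module. Distinctness of the $a_i$ makes $V$ a cyclic $\mosp(1,2)\T\bC[t]$-module generated by the product $v=v_1\T\cdots\T v_n$ of highest weight vectors. A direct check shows $v$ satisfies the relations defining $W_{-n}$: it has the correct weight, it is annihilated by the raising currents and by $\fh\T t\bC[t]$, and it satisfies the integrability relation (the corresponding relation already holds in each tensor factor $V(1)$). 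Hence there is a surjection $W_{-n}\twoheadrightarrow V$ of $\mosp(1,2)\T\bC[t]$-modules, so $\dim W_{-n}\ge\dim V=3^n$.

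The upper bound, which I expect to be the heart of the matter, amounts to producing a spanning set of $W_{-n}$ of size $3^n$. Since the cyclic vector $v$ of $W_{-n}$ is a highest weight vector killed by the raising currents and by $\fh\T t\bC[t]$, the super-PBW theorem reduces the problem to ordered monomials in the negative odd current $F\T t^k$ and the negative even current $f\T t^k$ applied to $v$; and since $\{F\T t^k,F\T t^l\}$ is proportional to $f\T t^{k+l}$, it suffices to span by ordered monomials in the odd currents $F\T t^k$ alone. The key technical input will be the Garland-type identities coming from the affine superalgebra $\widehat{\mosp}(1,2)$ — the analogue of Garland's identities for $\mslth$ used by Chari--Pressley \cite{CP} in the $\msl_2$ case — which, together with the integrability relation, bound the exponents $k$ and cut the spanning set down to a finite one that a direct enumeration organizes into exactly $3^n$ monomials indexed by lattice points in an $n$-fold product. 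The delicate point, and the main obstacle, is to establish the correct super-analogues of Garland's identities and the right normal form for products of odd lowering currents, keeping in mind that an odd current squares not to zero but to an even current.

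Combining the two bounds, $\dim W_{-n}=3^n$, so the surjection $W_{-n}\twoheadrightarrow V$ is an isomorphism of $\mosp(1,2)$-modules; since $V\cong V(1)^{\T n}$ as an $\mosp(1,2)$-module, this is the first assertion. For the second, filter $V$ by the total $t$-degree of the monomials applied to $v$; this is a filtration by $\mosp(1,2)$-submodules, so $\gr V$ is by definition the fusion product $V(1)(a_1)*\cdots*V(1)(a_n)$, and the surjection $W_{-n}\to V$ is compatible with filtrations (it maps the degree-$s$ homogeneous component of the graded module $W_{-n}$ onto the $s$-th filtered piece of $V$). It therefore induces a surjection of graded $\mosp(1,2)\T\bC[t]$-modules $W_{-n}=\gr W_{-n}\twoheadrightarrow\gr V$, which is an isomorphism because $\dim\gr V=3^n$. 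Thus $W_{-n}$ is realized as the claimed fusion product; and since $\mosp(1,2)$ has a semisimple category of finite-dimensional modules, $\gr V\cong V$ as $\mosp(1,2)$-modules, consistent with the first part.
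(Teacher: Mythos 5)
Your lower bound (cyclicity of the tensor product of evaluation modules, verification of the defining relations, hence a surjection $W_{-n}\twoheadrightarrow V$ and $\dim W_{-n}\ge 3^n$) and your concluding passage to the fusion product match the paper. The gap is exactly where you say it is: the upper bound $\dim W_{-n}\le 3^n$. You reduce it to ``Garland-type identities for $\widehat{\mosp}(1,2)$'' and a normal form for products of odd currents, and then explicitly flag establishing those identities as ``the main obstacle'' without doing it. That is not a proof of the spanning set; it is a statement of what would need to be proved. (A smaller slip in the same passage: the anticommutator $[g^-_i,g^-_j]_+\propto f_{i+j}$ does not let you discard the even currents in favour of odd ones with distinct indices --- the square of an odd current is a nonzero even current, so the natural spanning monomials must retain both the $f$'s, or rather the $e$'s in the paper's lowest-weight convention, and the $g$'s.)

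The paper's Lemma 1.1 closes this gap without any super-Garland machinery, and you should adopt its route. Using $[g^+_i,g^+_j]_+=2e_{i+j}$ one pushes all odd currents to act first on $w_{-n}$ with strictly increasing indices $0\le b_1<\dots<b_k\le n-1$ (indices $\ge n$ die because $e_i w_{-n}=0$ for $i\ge n$ forces $g^+_i w_{-n}=g^-_0 e_i w_{-n}=0$), so that
\[
W_{-n}=\sum_{0\le b_1<\dots<b_k\le n-1} \U(\msl_2\T\bC[t])\, g^+_{b_1}\dots g^+_{b_k} w_{-n}.
\]
Ordering these monomials suitably and passing to the associated graded of the resulting filtration, each subquotient is a cyclic $\msl_2[t]$-module whose generator satisfies all the relations of the classical Weyl module $D_{-(n-k)}$, hence is spanned by the $2^{n-k}$ monomials $e_{a_1}\dots e_{a_s}$ with $0\le a_1\le\dots\le a_s\le n-k-s$. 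Summing $\sum_{k}\binom{n}{k}2^{n-k}=3^n$ gives the upper bound, and the only nonclassical input is the single anticommutator relation above. Until you either carry out this reduction or actually prove the super-Garland identities you invoke, the dimension count --- and with it both assertions of the theorem --- remains unestablished.
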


We show that $W_{-n}$ can be filtered by the Weyl modules for $\msl_2\T \bC[t]$. This allows us to construct bases and compute the characters of
$W_{-n}$. Our next goal is to relate the characters of the Weyl modules to the nonsymmetric Macdonald polynomials
(\cite{Ch1,Ch2}).
Using the Ram-Yip formula for the nonsymmetric Macdonald polynomials (see \cite{RY,OS}),  we prove the following theorem.
\begin{thm}
Let $E_{-n}^{A_2^{(2)\dagger}}(x,q,t)$ be the nonsymmetric Macdonald polynomials of type $A_2^{(2)\dagger}$. Then the character of 
$W_{-n}$ is given by $E_{-n}^{{A_2^{(2)\dagger}}}(x,q,0)$ and the specialization $E_{-n}^{{A_2^{(2)\dagger}}}(x,q,\infty)$ coincides
with the PBW twisted character of $W_{-n}$. 
\end{thm}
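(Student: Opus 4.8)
The plan is to proceed in two independent halves, corresponding to the $t=0$ and $t=\infty$ specializations, both built on the Ram–Yip formula expressing $E_{-n}^{A_2^{(2)\dagger}}(x,q,t)$ as a sum over alcove walks (equivalently, over subsets of the set of hyperplanes crossed by a fixed reduced walk from the fundamental alcove to the alcove indexed by $-n\omega$). Each term of the Ram–Yip sum is a monomial in $x$ times a product of factors of the shape $\frac{1-t}{1-q^a t^b}$ or $\frac{(1-t)t^{?}}{1-q^a t^b}$, one per crossed hyperplane, and the combinatorics of which hyperplanes are ``folded'' is governed by the quantum Bruhat graph of type $A_2^{(2)\dagger}$. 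The first step, common to both specializations, is to set up this data explicitly for the rank-one twisted datum: list the affine roots crossed by the straight-line walk to $-n\omega$, record the associated $(a,b)$ exponents, and identify the quantum Bruhat graph edges. Since the finite part is just $\msl_2$ (or its $\mosp(1,2)$ shadow), this bookkeeping is completely explicit and the alcove path has a transparent ``zig-zag'' description.

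For the $t=0$ specialization I would substitute $t=0$ directly into each Ram–Yip term: the factors $\frac{1-t}{1-q^a t^b}$ become $1$, factors carrying an explicit positive power of $t$ in the numerator vanish, and the only surviving walks are those in which every fold is ``allowed'' in the strong sense (no quantum/downward edge is used). What remains is a manifestly positive sum of monomials $q^{\deg} x^{\mathrm{wt}}$, and the claim is that this equals the graded character of $W_{-n}$ computed earlier in the paper via the $\msl_2\T\bC[t]$-Weyl-module filtration. I would match the two by showing both satisfy the same recursion in $n$: on the Macdonald side this is a Pieri/creation-operator recursion $E_{-n}\mapsto E_{-(n+1)}$ coming from appending one more elementary segment to the alcove walk; on the module side it is the short exact sequence relating $W_{-(n+1)}$ to $W_{-n}$ furnished by the fusion-product (Theorem 1.1) and the $\msl_2$-filtration. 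Checking the base cases $n=0,1$ is immediate, so it suffices to verify the two recursions coincide, which reduces to a finite hypergeometric identity in $q$.

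For the $t=\infty$ specialization the arithmetic is the mirror image: divide each Ram–Yip term by the appropriate power of $t$ so that the limit $t\to\infty$ exists, whereupon $\frac{1-t}{1-q^a t^b}\to -q^{-a} t^{1-b}$-type factors survive only along the \emph{complementary} set of walks — those maximally using the quantum edges — and one obtains again a positive sum of monomials, now weighted by a ``co-degree'' statistic. This is to be identified with the PBW-twisted character of $W_{-n}$, i.e. the character of the associated graded space with respect to the PBW filtration on the odd (or lowering) generators, using the basis constructed in the earlier sections. The natural mechanism is a $q\leftrightarrow q^{-1}$ (or $t\leftrightarrow t^{-1}$) duality on the Ram–Yip expansion swapping the $t=0$ and $t=\infty$ answers, combined with the representation-theoretic statement that passing to the PBW-graded module implements exactly this reversal of the grading statistic on the basis vectors; so once the $t=0$ half is done, the $t=\infty$ half follows by transporting the recursion through the duality, again checking it against the PBW analogue of the short exact sequence for $W_{-(n+1)}$.

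The main obstacle I anticipate is not either specialization in isolation but the precise matching of statistics: the Ram–Yip formula produces its monomials weighted by an alcove-walk statistic (a sum of $q$-powers read off from folds and a sign/shift bookkeeping that is genuinely subtle in the twisted type $A_2^{(2)\dagger}$, where root lengths and the $\dagger$ rescaling interact), whereas the Weyl-module side produces monomials weighted by the $\bC[t]$-grading, respectively the PBW degree, on an explicit monomial basis. Converting one statistic into the other — essentially proving a $\maj$-type equals $\coinv$-type equidistribution on the relevant combinatorial set — is where the real work lies; I would handle it by the recursion-matching strategy above rather than by a direct bijection, since the recursions are short and the twisted-type subtleties enter only through finitely many structure constants that can be pinned down once and for all in the rank-one case.
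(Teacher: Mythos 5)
Your overall architecture (Ram--Yip formula, specialize at $t=0$ and $t=\infty$, compare with the Weyl-module side) is the same as the paper's, but the two places where you depart from it are exactly where your plan has gaps. First, the comparison mechanism: you propose to match a Pieri-type recursion in $n$ on the Macdonald side against ``the short exact sequence relating $W_{-(n+1)}$ to $W_{-n}$ furnished by the fusion product.'' No such exact sequence with an identified quotient is available (the paper only constructs an embedding $W_{-n}\hookrightarrow W_{-n-1}$, $w_{-n}\mapsto g^+_n w_{-n-1}$, and the $2\cdot 3^n$-dimensional quotient is not identified as a Weyl module), and the intertwiner recursion on the Macdonald side alternates through the positive weights $E_{n+1}$, so a recursion purely from $E_{-n}$ to $E_{-(n+1)}$ is a composite whose module-theoretic counterpart you would still have to build. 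The paper avoids this entirely: it proves the spanning set $e_{a_1}\cdots e_{a_s}g^+_{b_1}\cdots g^+_{b_k}w_{-n}$ is a basis (via the filtration by $\msl_2[t]$-Weyl modules plus the fusion-product dimension count $3^n$), reads off the character as an explicit sum of $q^2$-multinomials, solves the Ram--Yip recurrences for the coefficients $c^\dagger_r(k_{22},k_{21},k_{11})$ in closed form, and matches the two closed formulas term by term. If you want to keep your recursion-matching strategy you must actually construct the module-side recursion, which is the hard part you have deferred.

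Second, the $t=\infty$ half. The duality you invoke --- that $q\leftrightarrow q^{-1}$ swaps the $t=0$ and $t=\infty$ answers at a fixed weight, so that the PBW statement ``follows by transporting the recursion'' --- is false as stated. At fixed weight $-n$ the two specializations are genuinely different polynomials: in the $\dagger$ case the $t=0$ limit retains the walks with no negative $0$-foldings while the $t=\infty$ limit retains those with no positive $0$-foldings, and the coefficients differ by the nontrivial factor $q^{2k_{22}+2k_{21}}$ (compare \eqref{cdualpositivezero} with \eqref{cdualnegativezero}). The correct duality proved in the paper is $E^{A_2^{(2)\dagger}}_{n+1}(x,q,0)=x\,E^{A_2^{(2)\dagger}}_{-n}(x,q^{-1},\infty)$, relating \emph{different} weights, and it is a consequence of the two explicit formulas rather than an input. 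Correspondingly, the representation-theoretic claim that passing to the PBW-graded module ``implements this reversal of the grading statistic'' is not something you can assume; the paper instead observes that the monomial basis descends to ${\rm gr}\,W_{-n}$, computes the PBW-degree statistic $k+s$ directly on it, and compares with the explicit $t=\infty$ formula. You would need to do this direct computation (or prove the cross-weight duality independently) rather than appeal to a fixed-weight symmetry.
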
      

We close the introduction with several remarks. 

First, the $\mosp(1,2)$ current algebra  has the 
twisted analogue $\mosp(1,2)[t]^\sigma$. We study all the questions described above in the twisted case as well. 
In particular, we establish a connection
with the specializations of the nonsymmetric Macdonald polynomials of type $A_2^{(2)}$.
We note that both algebras  $\mosp(1,2)[t]$ and $\mosp(1,2)[t]^\sigma$ are Borel's subalgebras in the affine 
superalgebra $\widehat{\mosp(1,2)}$.

Second, in both twisted and untwisted cases we define the positive $n$ versions of the Weyl modules $W_n$.
We also make a link to the Macdonald polynomials. 

Third, we show that there exist embeddings of $\mosp(1,2)\T\bC[t]$-modules $W_{-n}\subset W_{-n-1}$ and we compute the
character of the (infinite-dimensional) limit. We note that we do not know if there is a structure of the representation of 
a larger algebra on this limit. 

Finally, let us mention that in the Macdonald polynomials part of our paper we follow the ideas and methods of the paper \cite{OS}.
In Appendix \ref{OrrSh} we describe the most important for us ingredients of the approach of Orr and Shimozono. 

The paper is organized as follows:\\
In Section \ref{Weyl} we study the Weyl modules for $\mosp(1,2)\T\bC[t]$ and their twisted version.\\
In Section \ref{Macdonald} we derive explicit formulas for the types $A_2^{(2)}$ and ${A_2^{(2)\dagger}}$ Macdonald
polynomials.\\
In Section \ref{WM} we establish a connection between the two parts of the story.

\section{Weyl modules}\label{Weyl}
\subsection{The classical case}
Let $D_{-n}$, $n\ge 0$ be the Weyl modules for the current algebra $\msl_2[t]=\msl_2\T\bC[t]$. They are defined as 
finite-dimensional cyclic 
modules with cyclic vector $d_{-n}$, subject to the conditions
\[
hd_{-n}=-nd_{-n},\ f\T\bC[t].d_{-n}=0,\ h\T t\bC[t].d_{-n}=0,
\]
where $e,h,f$ for the standard basis of $\msl_2$. 
These modules are known to be $2^n$-dimensional with a monomial basis
\[
e_{a_1}\dots e_{a_k}d_{-n},\ 0\le a_1\le \dots\le a_k\le n-k.
\] 
For a graded vector space $M=\bigoplus_{s\ge 0} M_s$ with an action of the operator $h$
We define the character ${\rm ch} M(x,q)$ as $\sum_{s\ge 0} q^s{\rm tr}(x^h|_{M_s})$. 
The character of $D_{-n}$ is equal to $\sum_{k=0}^n x^{-n+2k}\qb{n}{k}$, where
the $q$-binomial coefficients are given by the formula 
\[
\qb{n}{m}=\frac{(1-q)(1-q^2)\dots(1-q^n)}{(1-q)\dots(1-q^m)(1-q)\dots(1-q^{n-m})}.
\]

The modules $D_{-n}$ are known to be isomorphic to the graded tensor product (the fusion product \cite{FeLo1}) of $n$
copies of standard $2$-dimensional $\msl_2$-modules. Moreover, $D_{-n}$ is isomorphic to a Demazure module in the
basic level one representation of the affine Lie algebra $\mslth$. In particular, there exist embeddings of $\msl_2\T\bC[t]$-modules 
\[D_{0}\subset D_{-2}\subset D_{-4}\subset \dots,\qquad D_{-1}\subset D_{-3}\subset D_{-5}\subset \dots\]
and the inductive limits are isomorphic to the integrable $\mslth$ modules of level $1$. 
We have the explicit formula for the characters of the limits:
\[
\ch \lim_{n\to\infty} D_{-2n}=\sum_{k\in\bZ} x^{2k}\frac{q^{k^2}}{(q)_\infty},\
\ch \lim_{n\to\infty} D_{-2n-1}=\sum_{k\in\bZ} x^{2k+1}\frac{q^{k(k+1)}}{(q)_\infty}.
\]

\subsection{Weyl modules for superalgebras}
Our references here are \cite{P,Mus1,Mus2}.
The Lie superalgebra $\mosp(1,2)$ is isomorphic to the direct sum $\msl_2\oplus\pi_1$, where
$\msl_2$ is the even part and the two-dimensional $\msl_2$ module $\pi_1$ if the odd part.
Let $e,h,f$ be the standard basis of $\msl_2$ and let $g^+,g^-$ be the basis of $\pi_1$. One has
the nontrivial commutation relations
\begin{gather*}
[e,f]=h,\ [h,e]=2e,\ [h,f]=-2f,\\
[h,g^+]=g^+,\ [h,g^-]=-g^-,\ [g^+,g^-]_+=h, \\ [g^+,g^+]_+=2e,\ [g^-,g^-]_+=-2f,\ [f,g^+]=g^-,\ [e,g^-]=-g^+.
\end{gather*}
We have the Cartan decomposition
\[
\mosp(1,2)=\fn^-\oplus\fh\oplus\fn^+,\ \fn^-={\rm span}(f,g^-),\ \fn^+={\rm span}(e,g^+),\ \fh={\mathrm span}(h).
\]

We consider the current algebra $\mosp(1,2][t]=\mosp(1,2)\T \bC[t]$ and its Weyl module $W_{-n}$ defined as the cyclic 
module with a generator $w_{-n}$ subject to the relations
\begin{gather*}
(\fn^-\oplus\fh)\T t\bC[t]. w_{-n}=0,\ \ (\fn^-\T 1).w_{-n}=0,\\ h_0.w_{-n}=-nw_{-n},\ \ (e\T 1)^{n+1}.w_{-n}=0.
\end{gather*}
For $x\in\mosp(1,2)$ we denote by $x_i\in\mosp(1,2)[t]$ the element $x\T t^i$. 

\begin{lem}\label{untwbas}
One has $\mosp(1,2)\T t^n\bC[t].w_{-n}=0$ and $W_{-n}$ is spanned by the monomials of the form
\begin{gather}\label{basis}
e_{a_1}\dots e_{a_s}g^+_{b_1}\dots g^+_{b_k}w_{-n},\\ \nonumber 
0\le b_1<\dots<b_k\le n-1,\ 0\le a_1\le\dots\le a_s\le n-k-s. 
\end{gather}
\end{lem}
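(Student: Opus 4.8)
The plan is to argue in three stages: pass to the nilpotent radical, show that $\mosp(1,2)\T t^n\bC[t]$ acts by zero, and then filter by the $\msl_2[t]$-Weyl modules to get the sharp bound on the $e$-exponents.

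First I would use the PBW theorem for the Lie superalgebra $\mosp(1,2)[t]=\fn^-[t]\oplus\fh[t]\oplus\fn^+[t]$. Since $(\fn^-\oplus\fh)\T t\bC[t]$ and $\fn^-\T 1$ annihilate $w_{-n}$ and $h_0$ acts by the scalar $-n$, this gives $W_{-n}=U(\fn^+[t])w_{-n}$. In $\fn^+[t]$ the elements $e_i$ are central and $[g^+_i,g^+_j]_+=2e_{i+j}$, so $(g^+_i)^2=e_{2i}$; hence $U(\fn^+[t])$ has the PBW monomial basis $e_{a_1}\cdots e_{a_s}g^+_{b_1}\cdots g^+_{b_k}$ with $0\le a_1\le\cdots\le a_s$ and $0\le b_1<\cdots<b_k$ (odd generators occurring at most once each), and applying these to $w_{-n}$ already yields a spanning set of the shape of \eqref{basis}. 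It remains to establish the two bounds on the exponents.

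Second, to prove $\mosp(1,2)\T t^n\bC[t].w_{-n}=0$: restricting to $\msl_2[t]$, the vector $w_{-n}$ satisfies precisely the defining relations of the cyclic generator $d_{-n}$ of $D_{-n}$, so there is a surjection $D_{-n}\twoheadrightarrow U(\msl_2[t])w_{-n}$. From the known monomial basis of $D_{-n}$, its weight $(-n+2)$ space lies in $t$-degrees $0,\dots,n-1$, whence $e\T t^j.d_{-n}=0$ for $j\ge n$ (the modes $f\T t^j$, $h\T t^j$ killing $d_{-n}$ already by its defining relations); since $\msl_2\T t^n\bC[t]$ is an ideal of $\msl_2[t]$ it therefore annihilates $D_{-n}$, hence $w_{-n}$, and in particular $e_j w_{-n}=0$ for $j\ge n$. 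For the odd modes, $[g^-_0,e_j]=g^+_j$ and $g^-_0 w_{-n}=0$ give $g^+_j w_{-n}=g^-_0 e_j w_{-n}=0$ for $j\ge n$, while $g^-_j w_{-n}=0$ for $j\ge1$ is a defining relation. Thus $\mosp(1,2)\T t^n\bC[t]$ kills $w_{-n}$, and being an ideal of $\mosp(1,2)[t]$ it kills all of $W_{-n}$. Consequently we may take $b_k\le n-1$, and since $e_i$ commutes with $g^+_j$ we get $W_{-n}=\sum_{\mathbf b}g^+_{\mathbf b}\,\bC[e_0,\dots,e_{n-1}].w_{-n}$, a sum over the finitely many strictly increasing tuples $\mathbf b\subseteq\{0,\dots,n-1\}$; as $\bC[e_0,\dots,e_{n-1}].w_{-n}\subseteq U(\msl_2[t])w_{-n}$ is a subspace of (the image of) the finite-dimensional module $D_{-n}$, the module $W_{-n}$ is finite-dimensional, and in particular $a_s\le n-1$.

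Third, for the sharp bound $a_s\le n-k-s$ I would filter $W_{-n}$ as an $\msl_2[t]$-module. The filtration by the number $k$ of odd factors is not usable because $h\T t\bC[t]$ raises the indices of the $g^+$'s rather than lowering $k$; instead, for a large constant $A$ put $M_j=\mathrm{span}\{\,e_{\mathbf a}g^+_{\mathbf b}w_{-n}:A(n-|\mathbf b|)+\textstyle\sum_i b_i\ge j\,\}$. A bracket computation (using $[f,g^+]=g^-$, $[g^-,g^+]_+=h$, $[g^-,e]=g^+$, $[h,g^+]=g^+$, and the annihilations $g^-\T\bC[t].w_{-n}=0$, $h\T t\bC[t].w_{-n}=0$) shows the $M_j$ are $\msl_2[t]$-submodules, that both $f\T\bC[t]$ and $h\T t\bC[t]$ map $g^+_{\mathbf b}w_{-n}$ into $M_{j+1}$, and that the $h_0$-weight of $g^+_{\mathbf b}w_{-n}$ is $-(n-k)$. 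Passing to the (finite-dimensional) associated graded $\gr W_{-n}=\bigoplus_j M_j/M_{j+1}$, the image $\bar v_{\mathbf b}$ of $g^+_{\mathbf b}w_{-n}$ with $|\mathbf b|=k$ is killed by $f\T\bC[t]$ and by $h\T t\bC[t]$, has $h_0$-weight $-(n-k)$, and — since $\gr W_{-n}$ is a finite-dimensional $\msl_2$-module and $f_0\bar v_{\mathbf b}=0$ — also satisfies $(e\T 1)^{n-k+1}\bar v_{\mathbf b}=0$. Hence $\bar v_{\mathbf b}$ generates an $\msl_2[t]$-quotient of $D_{-(n-k)}$, which is spanned by $e_{a_1}\cdots e_{a_s}\bar v_{\mathbf b}$ with $0\le a_1\le\cdots\le a_s\le n-k-s$; so $\gr W_{-n}$, and therefore $W_{-n}$, is spanned by the monomials \eqref{basis}.

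I expect the third stage to be the main obstacle: recognizing $g^+_{b_1}\cdots g^+_{b_k}w_{-n}$ as a copy of the cyclic vector of $D_{-(n-k)}$ is literally true only after passing to the associated graded of a carefully chosen filtration, and the delicate point is to choose the filtering weight $A(n-k)+\sum_i b_i$ so that $\fn^-\T\bC[t]$ and $\fh\T t\bC[t]$ simultaneously act trivially on $\gr W_{-n}$, together with checking $\msl_2[t]$-stability of the $M_j$ via the bracket bookkeeping. The remaining ingredients — the PBW manipulations and the already-recorded structure of the $\msl_2[t]$-Weyl modules $D_{-m}$ — are routine.
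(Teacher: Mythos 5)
Your proposal is correct and follows essentially the same route as the paper: reduce the vanishing of $\mosp(1,2)\T t^n\bC[t].w_{-n}$ to the known $\msl_2[t]$ Weyl module theory together with $g^+_j=g^-_0e_jw_{-n}$, and then filter $W_{-n}$ by the $\msl_2[t]$-modules generated by the $g^+$-monomials so that each associated graded layer becomes a quotient of $D_{-(n-k)}$. The only difference is cosmetic: your integer-valued filtration weight $A(n-k)+\sum_i b_i$ is a linearization of the paper's partial order on the monomials $g^+_{b_1}\cdots g^+_{b_k}$ (more factors first, then larger indices first), and both serve the identical purpose of killing $\fn^-[t]$ and $\fh\T t\bC[t]$ on the images of $g^+_{\mathbf b}w_{-n}$ in the associated graded.
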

\begin{proof}
The condition $\msl_2\T t^n\bC[t] w_{-n}=0$ follows from the results on the Weyl modules for $\msl_2$
(see e.g. \cite{CL}).
Now if $e_iw_{-n}=0$, then $g^-_0e_iw_{-n}=g^+_iw_{-n}=0$ and similarly for $g^-_iw_{-n}$.

We note that since $[g^+_i,g^+_j]_+=2e_{i+j}$, we have
\[
W_{-n}=\sum_{0\le b_1<\dots<b_k\le n-1} U(\msl_2\T\bC[t])g^+_{b_1}\dots g^+_{b_k}w_{-n}.
\]
We introduce a partial order on the monomials $g^+_{b_1}\dots g^+_{b_k}$, $0\le b_1<\dots<b_k\le n-1$, saying that
for two different monomials
$g^+_{b_1}\dots g^+_{b_k}<g^+_{c_1}\dots g^+_{c_l}$ if $k>l$ or ($k=l$ and $b_i\ge c_i$, $i=1,\dots,k$).
Let us totally order the monomials  $g^+_{b_1}\dots g^+_{b_k}$, $0\le b_1<\dots<b_k\le n-1$ as $m_1,m_2,\dots,m_N$ in such a way
that if $i<j$ then $m_i<m_j$ (in the sense of the partial order above). 

Now let us introduce an increasing filtration $F_i$ on $W_{-n}$ by
\[
F_i=\sum_{j=1}^i U(\msl_2\T\bC[t])m_iw_{-n}.
\]
We claim that the monomials \eqref{basis} span the associated graded space ${\rm gr} F_\bullet$.
Indeed, let $\overline{m_iw_{-n}}$ by the image of $m_iw_{-n}$ in the associated graded space. Then
for $m_i=g^+_{b_1}\dots g^+_{b_k}$ we have 
\[
(\fn^-\oplus\fh)\T t\bC[t].\overline{m_iw_{-n}}=0,\ (\fn^-\T 1). \overline{m_iw_{-n}}=0,\ (h\T 1).\overline{m_iw_{-n}}=-(n+k)w_{-n}.
\]
Hence all the relations for the $\msl_2\T\bC[t]$ Weyl module with lowest weight $-n+k$ are satisfied, which implies the claim of the lemma.
\end{proof}

\begin{lem}\label{-0}
The number of elements in the set \eqref{basis} is equal to $3^n$ and its character is given by
\[
\sum_{k=0}^n q^{k(k-1)/2}\qb{n}{k}\sum_{s=0}^{n-k} x^{-n+k+2s} \qb{n-k}{s}.
\] 
\end{lem}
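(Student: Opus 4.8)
The plan is to treat the monomials in \eqref{basis} as a bigraded set — bookkeeping the $t$-degree (the $q$-grading) and the $h$-weight (the $x$-grading) of each one — and to show that the resulting generating function factors into a ``$g^+$-part'' and an ``$e$-part'', each a standard $q$-binomial sum. First I would record the bookkeeping: the cyclic vector $w_{-n}$ lies in degree $0$ with $h$-weight $-n$; each operator $g^+_{b}$ raises the degree by $b$ and the weight by $1$; each $e_{a}$ raises the degree by $a$ and the weight by $2$. Hence the monomial $e_{a_1}\dots e_{a_s}g^+_{b_1}\dots g^+_{b_k}w_{-n}$ contributes $x^{-n+k+2s}q^{a_1+\cdots+a_s+b_1+\cdots+b_k}$. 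Since for fixed $k$ and $s$ the constraint $0\le a_1\le\cdots\le a_s\le n-k-s$ involves only $k$ and $s$ (not the chosen $b$'s), and since this block of $a$'s is non-empty only for $s\le n-k$, the generating function of \eqref{basis} factors as
\[
\sum_{k=0}^n x^{-n+k}\Bigl(\sum_{0\le b_1<\cdots<b_k\le n-1}q^{b_1+\cdots+b_k}\Bigr)
\sum_{s=0}^{n-k} x^{2s}\Bigl(\sum_{0\le a_1\le\cdots\le a_s\le n-k-s}q^{a_1+\cdots+a_s}\Bigr).
\]

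Next I would evaluate the two inner sums. For the $b$-sum, the substitution $b_i\mapsto b_i-(i-1)$ carries a strictly increasing tuple in $\{0,\dots,n-1\}$ to a weakly increasing tuple in $\{0,\dots,n-k\}$ and shifts the total degree by $0+1+\cdots+(k-1)=k(k-1)/2$; since weakly increasing length-$k$ tuples bounded by $n-k$ are (reading them backwards) exactly the partitions inside a $k\times(n-k)$ box, whose $q$-count is $\qb{n}{k}$, we get $\sum_{0\le b_1<\cdots<b_k\le n-1}q^{b_1+\cdots+b_k}=q^{k(k-1)/2}\qb{n}{k}$. For the $a$-sum, reversing a weakly increasing tuple $0\le a_1\le\cdots\le a_s\le n-k-s$ gives a partition with at most $s$ parts each at most $n-k-s$, so the same box identity yields $\sum q^{a_1+\cdots+a_s}=\qb{n-k}{s}$. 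Substituting these back reproduces precisely the character stated in the lemma.

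Finally, the cardinality follows by specializing $x=q=1$: the $q$-binomials degenerate to ordinary binomials and the formula collapses to $\sum_{k=0}^n\binom{n}{k}\sum_{s=0}^{n-k}\binom{n-k}{s}=\sum_{k=0}^n\binom{n}{k}2^{n-k}=3^n$ by the binomial theorem. The only subtle point in the whole argument is the $a$-sum, where the upper bound $n-k-s$ depends on the summation variable; I expect this to be the sole (and quite mild) obstacle, handled cleanly by the partition-in-a-box reinterpretation rather than by a direct term-by-term manipulation.
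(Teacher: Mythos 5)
Your argument is correct and is exactly the direct enumeration the paper has in mind (its own proof is simply ``Straightforward''): the bigraded bookkeeping, the shift $b_i\mapsto b_i-(i-1)$ producing the factor $q^{k(k-1)/2}\qb{n}{k}$, the partitions-in-a-box identification giving $\qb{n-k}{s}$, and the specialization $x=q=1$ yielding $\sum_k\binom{n}{k}2^{n-k}=3^n$ are all accurate. Nothing to add.
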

\begin{proof}
Straightforward.
\end{proof}

Now let us give the twisted version of the lemma above. We replace the current algebra $\mosp(1,2)[t]$ with
its twisted analogue 
\[
\mosp(1,2)[t]^{\sigma}=\bigoplus_{i=0}^\infty \msl_2\T t^{2i}\oplus \bigoplus_{i=0}^\infty \pi_1\T t^{2i+1}. 
\]
This is again a Lie superalgebra. We define its Weyl module $W_{-n}^\sigma$ 
as the cyclic 
integrable with respect to $\msl_2$ module with a generator $w^\sigma_{-n}$ subject to the relations
\[
f_{2i}. w^\sigma_{-n}=0,\ g^-_{2i+1}. w^\sigma_{-n}=0,\ h_{2i+2}. w^\sigma_{-n}=0, \text{ for } i\ge 0
\]
and $h_0.w^\sigma_{-n}=-nw^\sigma_{-n}$, $e_0^{n+1}.w_{-n}=0$.

\begin{lem}
One has $\msl_2\T t^{2n}\bC[t^2].w_{-n}=0$ and $\pi_1\T t^{2n+1}\bC[t^2].w_{-n}=0$. 
$W^\sigma_{-n}$ is spanned by the monomials of the form
\begin{gather}\label{twbasis}
e_{a_1}\dots e_{a_s}g^+_{b_1}\dots g^+_{b_k}w_{-n},\\ \nonumber 1\le b_1<\dots<b_k\le 2n-1,\ 0\le a_1\le\dots\le a_s\le 2(n-k-s), 
\end{gather}
where $a_i$ are even and $b_j$ are odd.
\end{lem}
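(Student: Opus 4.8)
The plan is to repeat the argument of Lemma~\ref{untwbas} almost verbatim, the one new point being that the even part of $\mosp(1,2)[t]^{\sigma}$ is a current algebra in the variable $s=t^2$.

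First I would establish the two annihilation statements. The subalgebra $\bigoplus_{i\ge 0}\msl_2\T t^{2i}$ is isomorphic to $\msl_2\T\bC[s]$ via $s=t^2$, and the relations $f_{2i}w^\sigma_{-n}=0$, $h_{2i+2}w^\sigma_{-n}=0$ ($i\ge 0$), $h_0w^\sigma_{-n}=-nw^\sigma_{-n}$, $e_0^{n+1}w^\sigma_{-n}=0$ are exactly those defining the $\msl_2\T\bC[s]$-Weyl module $D_{-n}$. Hence the $\msl_2\T\bC[t^2]$-submodule generated by $w^\sigma_{-n}$ is a quotient of $D_{-n}$, and the classical fact $\msl_2\T s^n\bC[s].D_{-n}=0$ (see \cite{CL}) yields $\msl_2\T t^{2n}\bC[t^2].w^\sigma_{-n}=0$, in particular $e_{2i}w^\sigma_{-n}=0$ for $i\ge n$. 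For the odd part, $[e,g^-]=-g^+$ gives $g^+_{2i+1}=g^-_1 e_{2i}-e_{2i}g^-_1$, so from $g^-_1 w^\sigma_{-n}=0$ and $e_{2i}w^\sigma_{-n}=0$ ($i\ge n$) we get $g^+_{2i+1}w^\sigma_{-n}=0$ for $i\ge n$; combined with $g^-_{2i+1}w^\sigma_{-n}=0$ for all $i\ge 0$ this proves $\pi_1\T t^{2n+1}\bC[t^2].w^\sigma_{-n}=0$.

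Next, the spanning set. Since $[e,g^+]=0$, $(g^+_{2i+1})^2=e_{4i+2}$ and $[g^+_{2i+1},g^+_{2j+1}]_+=2e_{2i+2j+2}$ lies in the even part, PBW (ordering $f_{2i},g^-_{2i+1},h_{2i+2}$ for $i\ge 0$, and $h_0$, to the right, where they act by $0$ resp.\ by $-n$) shows $W^\sigma_{-n}=\sum U(\msl_2\T\bC[t^2])\,g^+_{b_1}\dots g^+_{b_k}w^\sigma_{-n}$, the sum over strictly increasing tuples of odd integers, which by the previous paragraph ($g^+_b w^\sigma_{-n}=0$ for $b\ge 2n+1$) may be taken inside $[1,2n-1]$. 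Now totally order the finitely many monomials $g^+_{b_1}\dots g^+_{b_k}$ ($1\le b_1<\dots<b_k\le 2n-1$, odd) exactly as in Lemma~\ref{untwbas} and put $F_i=\sum_{j\le i}U(\msl_2\T\bC[t^2])\,m_j w^\sigma_{-n}$. The commutator computation of Lemma~\ref{untwbas}, with every power of $t$ doubled and using $e_cw^\sigma_{-n}=0$ for $c\ge 2n$ together with $g^+_b w^\sigma_{-n}=0$ for $b\ge 2n+1$, shows that $f_{2j}$, $g^-_{2j+1}$ and $h_{2j+2}$ ($j\ge 0$) send $m_iw^\sigma_{-n}$ into $F_{i-1}$; hence in $\gr F_\bullet$ the vector $\ov{m_iw^\sigma_{-n}}$ is annihilated by $f\T\bC[t^2]$ and $h\T t^2\bC[t^2]$ and has $h_0$-weight $-(n-k)$, so $F_i/F_{i-1}$ is a quotient of the $\msl_2\T\bC[s]$-Weyl module $D_{-(n-k)}$. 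The standard basis $e_{a_1}\dots e_{a_s}$, $0\le a_1\le\dots\le a_s\le(n-k)-s$, of $D_{-(n-k)}$ in the variable $s$ becomes in the variable $t$ the set of even indices $0\le a_1\le\dots\le a_s\le 2(n-k-s)$; summing over $i$ we conclude that the monomials \eqref{twbasis} span $\gr F_\bullet$, hence span $W^\sigma_{-n}$.

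The only real work is this last step: one has to check that after re-normal-ordering, each anticommutator $[g^-_\bullet,g^+_\bullet]_+=h_\bullet$ and each subsequent bracket $[h_\bullet,g^+_\bullet]=g^+_\bullet$ that arises either kills $w^\sigma_{-n}$ (by the degree bounds of the first step) or strictly decreases the number of $g^+$-factors, so that the filtration is respected and each $\ov{m_iw^\sigma_{-n}}$ satisfies the defining relations of the $\msl_2\T\bC[t^2]$-Weyl module. Since this is precisely the bookkeeping of Lemma~\ref{untwbas} read off with $t$ replaced by $t^2$, I do not expect any genuine difficulty beyond keeping careful track of parities and degrees.
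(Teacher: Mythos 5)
Your proposal is correct and follows exactly the route the paper intends: the lemma is stated there without a separate proof, as the twisted analogue of Lemma \ref{untwbas}, and your adaptation (the even part is $\msl_2\T\bC[s]$ with $s=t^2$, so the relations on $w^\sigma_{-n}$ are those of the classical Weyl module $D_{-n}$ in the variable $s$; the identity $g^+_{2i+1}=[g^-_1,e_{2i}]$ yields the odd annihilation; and the filtration by $g^+$-monomials has subquotients that are quotients of $\msl_2\T\bC[t^2]$-Weyl modules) is precisely that argument. One caveat, inherited from the paper rather than introduced by you: for ``strictly decreasing the number of $g^+$-factors'' to mean landing in $F_{i-1}$, the monomials with fewer $g^+$-factors must occupy the earlier filtration steps, so the total order must be taken opposite to the one literally written in the proof of Lemma \ref{untwbas} (there $k>l$ makes the longer monomial smaller, hence earlier, which is the wrong direction for the terms produced by $f\T\bC[t^2]$ and by collisions $(g^+_a)^2=e_{2a}$); with that reversal, and using integrability of $W^\sigma_{-n}$ to identify each layer as a quotient of $D_{-(n-k)}$, your bookkeeping goes through.
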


\begin{lem}\label{tw-0}
The number of elements in the set \eqref{twbasis} is equal to $3^n$ and its character is given by
\[
\sum_{k=0}^n q^{k^2}\qsb{n}{k}\sum_{s=0}^{n-k} x^{-n+k+2s} \qsb{n-k}{s}.
\] 
\end{lem}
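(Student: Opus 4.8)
The plan is to evaluate the generating function of the set \eqref{twbasis} by hand, running the ``straightforward'' counting of Lemma~\ref{-0} on the sublattices of even and odd indices. First I would record the weight and degree of a basis monomial: since $e$ has $h$-weight $2$, $g^+$ has $h$-weight $1$, and $w_{-n}$ has $h$-weight $-n$, the monomial $e_{a_1}\dots e_{a_s}g^+_{b_1}\dots g^+_{b_k}w_{-n}$ contributes $x^{-n+k+2s}$ to the $x$-grading, while its $t$-degree is $a_1+\dots+a_s+b_1+\dots+b_k$. For fixed $k$ the admissible tuples $(b_1,\dots,b_k)$ (distinct odd integers in $[1,2n-1]$) and, for fixed $k$ and $s$, the admissible tuples $(a_1,\dots,a_s)$ (even, weakly increasing, bounded by $2(n-k-s)$) vary independently, and the second set is empty for $s>n-k$; hence the character factors as
\[
\sum_{k=0}^n\sum_{s=0}^{n-k} x^{-n+k+2s}\Bigl(\sum_{\substack{1\le b_1<\dots<b_k\le 2n-1\\ b_j\ \text{odd}}}q^{b_1+\dots+b_k}\Bigr)\Bigl(\sum_{\substack{0\le a_1\le\dots\le a_s\le 2(n-k-s)\\ a_i\ \text{even}}}q^{a_1+\dots+a_s}\Bigr).
\]

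Next I would compute the two inner sums using the standard ``partition in a box'' identity $\sum_{0\le d_1\le\dots\le d_k\le m}Q^{d_1+\dots+d_k}=\binom{m+k}{k}_{\! Q}$, applied with $Q=q^2$. For the $g^+$ factor, substitute $b_j=2c_j-1$ with $1\le c_1<\dots<c_k\le n$ and then $c_j=d_j+j$ with $0\le d_1\le\dots\le d_k\le n-k$; this gives $b_1+\dots+b_k=2(d_1+\dots+d_k)+k^2$, so the factor equals $q^{k^2}\qsb{n}{k}$. For the $e$ factor, substitute $a_i=2a'_i$ with $0\le a'_1\le\dots\le a'_s\le n-k-s$, so the factor equals $\qsb{n-k}{s}$. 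Substituting these back yields exactly the claimed character.

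Finally, for the cardinality I would specialize $x=q=1$, which collapses the character to $\sum_{k=0}^n\binom{n}{k}\sum_{s=0}^{n-k}\binom{n-k}{s}=\sum_{k=0}^n\binom{n}{k}2^{\,n-k}=3^n$.

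I do not expect any genuine obstacle: the lemma is the twisted mirror of Lemma~\ref{-0}, and the only point requiring care is the bookkeeping of the two dilations $b_j=2c_j-1$ and $a_i=2a'_i$ together with the induced shifts of the bounding boxes. This is precisely what upgrades the ordinary $q$-binomials of the untwisted case to the $q^2$-binomials here, and what produces the exponent $q^{k^2}$ in place of the $q^{k(k-1)/2}$ of Lemma~\ref{-0}.
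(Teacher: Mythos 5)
Your computation is correct and is exactly the ``straightforward'' argument the paper has in mind (Lemma~\ref{tw-0} is stated without proof, mirroring the one-word proof of Lemma~\ref{-0}): the character factors over the independent choices of the $b$'s and $a$'s, the substitutions $b_j=2c_j-1$, $c_j=d_j+j$ and $a_i=2a_i'$ reduce both factors to the box-partition identity with $Q=q^2$ and produce the offset $q^{k^2}$ from the minimal odd sum $1+3+\dots+(2k-1)$, and the specialization $x=q=1$ gives $\sum_k\binom{n}{k}2^{n-k}=3^n$. No gaps.
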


\subsection{The graded tensor products for superalgebras}
We start with the untwisetd case.

Let $\fg=\fg_{\bar 0}\oplus\fg_{\bar 1}$ be a Lie superalgebra with $\fg_{\bar 0}$ being the even part and
$\fg_{\bar 1}$ being the odd part. For a $\fg$-module $X$ we denote by $X_{\bar 0}$ its even part and by $X_{\bar 1}$
its odd part. Let $V$ and $W$ be  cyclic $\fg$ modules 
with cyclic vectors $v$ and $w$; in what follows we always assume that the cyclic vectors are even.
The tensor product of $V$ and $W$ is defined by the formula
\[
g (x\T y) = gx\T y + (-1)^{ab} x\T gy,\ g\in\fg_{\bar a}, x\in V_{\bar b}.   
\]  
 
Let $z_1,\dots,z_n$ be a collection of pairwise distinct complex numbers and let $V^1,\dots, V^n$
be cyclic representations of $\fg$ with cyclic vectors $v^1,\dots,v^n$. Let $V^i(z_i)$ be the 
evaluation representations of $\fg\T\bC[t]$, where $x\T t^k$ acts as $z_i^kx$. 
\begin{lem}
The tensor product $\bigotimes_{i=1}^n V^i(z_i)$ is cyclic $\fg[t]$-module with cyclic vector $\otimes_{i=1}^n v^i$.
\end{lem}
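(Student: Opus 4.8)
The plan is to adapt the standard Vandermonde argument for fusion products (cf.\ \cite{FeLo1}) to the super setting, the only extra ingredient being the bookkeeping of Koszul signs. Write $M\subseteq V^1(z_1)\T\cdots\T V^n(z_n)$ for the $\fg\T\bC[t]$-submodule generated by $v^1\T\cdots\T v^n$; the goal is $M=V^1(z_1)\T\cdots\T V^n(z_n)$. Since each $V^i$ is cyclic over $\fg$ with generator $v^i$, the PBW theorem shows that $V^i$ is spanned by the vectors $Xv^i$ with $X$ an ordered monomial in a fixed homogeneous basis of $\fg$; consequently the whole tensor product is spanned by the pure tensors $(X_1v^1)\T\cdots\T(X_nv^n)$. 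By induction on $r\ge 0$ I would show that $(X_1v^1)\T\cdots\T(X_nv^n)\in M$ whenever $\sum_i\deg X_i=r$, the case $r=0$ being the cyclic vector itself.

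For the inductive step, fix monomials with $\sum_i\deg X_i=r\ge 1$, choose $i_0$ with $\deg X_{i_0}\ge 1$, and split off the leftmost generator: $X_{i_0}=x\,X'_{i_0}$ with $x\in\fg$ homogeneous. Put $Y_j=X_j$ for $j\ne i_0$ and $Y_{i_0}=X'_{i_0}$, so that $v:=(Y_1v^1)\T\cdots\T(Y_nv^n)$ has total length $r-1$ and hence lies in $M$ by the inductive hypothesis. Acting by $x\T t^k$ for $k=0,1,\dots,n-1$ and expanding via the tensor product action defined above gives
\[
(x\T t^k)\cdot v=\sum_{i=1}^n \epsilon_i\,z_i^k\,(Y_1v^1)\T\cdots\T(xY_iv^i)\T\cdots\T(Y_nv^n),
\]
where $\epsilon_i=(-1)^{|x|(|Y_1v^1|+\cdots+|Y_{i-1}v^{i-1}|)}\in\{\pm1\}$ depends only on $i$ and not on $k$.

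Because the $z_i$ are pairwise distinct, the $n\times n$ matrix $(z_i^k)_{0\le k\le n-1,\,1\le i\le n}$ is an invertible Vandermonde matrix; inverting it expresses each summand $(Y_1v^1)\T\cdots\T(xY_iv^i)\T\cdots\T(Y_nv^n)$ as a $\bC$-linear combination of the vectors $(x\T t^k)\cdot v\in M$, so all of them lie in $M$. The $i=i_0$ summand is precisely $(X_1v^1)\T\cdots\T(X_nv^n)$, which completes the induction, and therefore $M$ is the full tensor product. The one point absent from the classical statement is the sign $\epsilon_i$, but since it is nonzero and constant in $k$ the Vandermonde inversion is unaffected; I therefore expect no genuine obstacle, the whole content residing in the distinctness of the $z_i$ together with PBW.
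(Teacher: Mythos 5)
Your proof is correct and follows essentially the same route as the paper: recover the single-factor operators from the currents $x\T t^k$ by inverting the Vandermonde matrix, observing that the Koszul signs depend only on the tensor position (not on $k$) and hence only rescale the columns. Your version makes explicit, via induction on PBW monomials, the final step that the paper leaves implicit (that these recovered operators generate the full tensor product from $v^1\T\cdots\T v^n$), but the substance is identical.
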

\begin{proof}
Let $x\in\fg_{\bar 0}$. Then the operator $x\T t^k$ acts on the tensor product $\bigotimes V^i(z_i)$ by
the usual tensor product formula for representations of Lie algebras. Therefore all the operators 
$$x(i)=\underbrace{{\rm Id}\otimes \dots {\rm Id}}_{i-1}\otimes x\otimes {\rm Id}\otimes\dots {\rm Id}$$
on $\bigotimes_{i=1}^n V^i(z_i)$
can be written as linear combinations of the operators $x\T t^k$ (via the Vandermond determinant).

Now assume that $x\in\fg_{\bar 1}$. Then one has
\begin{multline*}
(x\T t^k)(u_1\otimes\dots\otimes u_n)=\\
(z_1^kx(1)+z_2^k (-1)^{\deg u_1}x(2)+ \dots +z_n^k(-1)^{\deg u_1+\dots+\deg u_{n-1}}z_n^kx(n))(u_1\otimes\dots\otimes u_n).
\end{multline*}
Therefore for any $i=1,\dots,n$ the operator $(-1)^{\deg u_1+\dots+\deg u_{i-1}}x(i)$ can be expressed as linear combination
of the operators $x\T t^j$, $0\le j\le n-1$ (the case $i=1$ corresponds to just $x(1)$). Therefore 
the operators $x\T t^j$, $0\le j\le n-1$ do generate the whole tensor product $\bigotimes_{i=1}^n V^i(z_i)$
acting on the tensor product of cyclic vectors.
\end{proof}

The universal enveloping algebra $\U(\fg[t])$ has natural grading coming for the degree of $t$,
$\U(\fg[t])=\bigoplus_{s\ge 0} \U(\fg[t])_s$ (for example, $\U(\fg[t])_0=\U(\fg))$.
Let us introduce the increasing filtration $F_s$ on the tensor product $\bigotimes_{i=1}^n V^i(z_i)$ as follows:
\[
F_s=\U(\fg[t])_s(v^1\otimes\dots\otimes v^n).
\] 
The associated graded space is cyclic $\U(\fg[t])$ module. An important feature is that it is now equipped with
the additional grading. We denote the graded module by $V^1(z_1)*\dots *V^n(z_n)$.

Let $V$ be the irreducible 3-dimensional representation of $\mosp(1,2)$. 
\begin{thm}
For any pairwise distinct $z_1,\dots,z_n$ the graded tensor product $V(z_1)*\dots *V(z_n)$ is isomorphic
to the Weyl module $W_{-n}$ as  $\mosp(1,2)[t]$ modules.
\end{thm}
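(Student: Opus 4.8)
The strategy is the familiar one for identifying Weyl modules with fusion products: construct a surjection from $W_{-n}$ onto the graded tensor product and then compare dimensions. Fix the $3$-dimensional irreducible $\mosp(1,2)$-module $V$ and normalise its (even) cyclic vector $v$ so that $\fn^-.v=0$ and $h.v=-v$; then $V$ has $h$-weights $-1,0,1$ with basis $v,\ g^+.v,\ e.v=(g^+)^2.v$, and since $e$ raises the weight by $2$ one has $e^2=0$ on $V$. Write $v^1,\dots,v^n$ for the cyclic vectors of the $n$ copies and let $\bar w$ be the image of $v^1\T\dots\T v^n$ in $V(z_1)*\dots*V(z_n)$. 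By the previous lemma the tensor product $\bigotimes_i V(z_i)$ is a cyclic $\mosp(1,2)[t]$-module generated by $v^1\T\dots\T v^n$, so its associated graded $V(z_1)*\dots*V(z_n)$ is cyclic on $\bar w$.

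The heart of the argument is to check that $\bar w$ satisfies the defining relations of $W_{-n}$. Since all $v^i$ are even, the superalgebra signs in the diagonal action are trivial, and $f.v^i=g^-.v^i=0$ forces $(f\T t^k)(v^1\T\dots\T v^n)=(g^-\T t^k)(v^1\T\dots\T v^n)=0$ for every $k\ge 0$; hence $(\fn^-\T\bC[t]).\bar w=0$. The weight count gives $h_0.\bar w=-n\bar w$. For $k\ge 1$ one computes in $\bigotimes_i V(z_i)$ that $(h\T t^k)(v^1\T\dots\T v^n)=-\big(\sum_i z_i^k\big)\,v^1\T\dots\T v^n$, which is a scalar multiple of the degree-$0$ generator and so lies in $F_0\subseteq F_{k-1}$; as $h\T t^k$ has $t$-degree $k$, its induced action carries $\bar w$ into $F_k/F_{k-1}$, where this element is zero, so $(\fh\T t\bC[t]).\bar w=0$. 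Finally $e\T 1$ acts on $\bigotimes_i V(z_i)$ as a sum of $n$ pairwise commuting operators of square zero, so $(e\T 1)^{n+1}$ vanishes there by the pigeonhole principle applied to the multinomial expansion, and therefore $(e\T 1)^{n+1}.\bar w=0$.

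By the universal property defining $W_{-n}$ there is a unique morphism of $\mosp(1,2)[t]$-modules $W_{-n}\to V(z_1)*\dots*V(z_n)$ with $w_{-n}\mapsto\bar w$, and it is surjective because $\bar w$ is a cyclic vector. On the other hand, as a vector space the graded tensor product is $V^{\T n}$, so $\dim V(z_1)*\dots*V(z_n)=3^n$, while Lemma \ref{untwbas} together with Lemma \ref{-0} gives $\dim W_{-n}\le 3^n$. A surjection from a space of dimension at most $3^n$ onto one of dimension $3^n$ is an isomorphism; this proves the theorem and, incidentally, shows that the graded tensor product is independent of the choice of the pairwise distinct $z_i$.

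I expect the only genuinely delicate point to be the vanishing of $(\fh\T t\bC[t]).\bar w$: this relation is false in the honest tensor product $\bigotimes_i V(z_i)$ and becomes true only after passing to the associated graded, so one must argue via the filtration degree rather than by a direct computation. The remaining relations amount to a weight bookkeeping and a nilpotency count, and the crucial upper bound $\dim W_{-n}\le 3^n$ is already in hand from the earlier lemmas.
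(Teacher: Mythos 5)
Your proposal is correct and follows the same route as the paper: verify the defining relations of $W_{-n}$ on the image of the tensor product of cyclic vectors to get a surjection $W_{-n}\to V(z_1)*\dots*V(z_n)$, then conclude by comparing the dimension $3^n$ of the right-hand side with the upper bound from Lemmas \ref{untwbas} and \ref{-0}. The paper dismisses the verification of the relations as "easy to see"; your careful treatment of $(\fh\T t\bC[t]).\bar w=0$ via the filtration degree (the one relation that fails before passing to the associated graded) and of $(e\T 1)^{n+1}=0$ via commuting square-zero operators supplies exactly the details the paper omits.
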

\begin{proof}
It is easy to see that all the defining relations of the Weyl module do hold in the graded tensor product.
Therefore, we have a surjection  $W_{-n}\to V(z_1)*\dots *V(z_n)$. Since the dimension of the right hand side is
$3^n$ and this is the upper estimate for the dimension of the left hand side (see Lemma \ref{-0}), the surjection is the isomorphism. 
\end{proof}

\begin{cor}
The graded tensor product $V(z_1)*\dots *V(z_n)$ does not depend (as $\mosp(1,2)[t]$-module) on the (pairwise distinct) parameters $z_i$.  
\end{cor}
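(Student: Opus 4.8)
The plan is to deduce this directly from the preceding theorem. That theorem provides, for every choice of pairwise distinct complex numbers $z_1,\dots,z_n$, an isomorphism of $\mosp(1,2)[t]$-modules $V(z_1)*\dots *V(z_n)\cong W_{-n}$, where the target $W_{-n}$ is defined purely by generators and relations inside $\mosp(1,2)[t]$ and carries no reference to the parameters. First I would record that both the graded tensor product and $W_{-n}$ come equipped with a $\bZ_{\ge 0}$-grading by the degree in $t$ --- on the graded tensor product side this is the grading on $\gr F_\bullet$, and on $W_{-n}$ it is inherited from the grading on $\U(\mosp(1,2)[t])$ --- and the surjection $W_{-n}\to V(z_1)*\dots *V(z_n)$ constructed in the proof of the theorem is a morphism of graded modules, hence so is the resulting isomorphism.

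Then, given two tuples of pairwise distinct parameters $(z_1,\dots,z_n)$ and $(z_1',\dots,z_n')$, I would simply compose the two isomorphisms supplied by the theorem,
\[
V(z_1)*\dots *V(z_n)\;\xrightarrow{\ \sim\ }\;W_{-n}\;\xrightarrow{\ \sim\ }\;V(z_1')*\dots *V(z_n'),
\]
obtaining an isomorphism of (graded) $\mosp(1,2)[t]$-modules between the two graded tensor products. Since the two tuples were arbitrary, this is exactly the independence asserted in the corollary.

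I do not expect any real obstacle here beyond the theorem the statement rests on: the only point requiring a word of care is that the identification with $W_{-n}$ is genuinely parameter-free, which is transparent since the defining relations of $W_{-n}$ --- namely $(\fn^-\oplus\fh)\T t\bC[t].w_{-n}=0$, $(\fn^-\T 1).w_{-n}=0$, $h_0.w_{-n}=-nw_{-n}$ and $(e\T 1)^{n+1}.w_{-n}=0$ --- make no mention of the $z_i$ at all. One could alternatively bypass $W_{-n}$ entirely and argue that the dimension count of Lemma \ref{-0} forces every $V(z_1)*\dots *V(z_n)$ to be the unique cyclic quotient of $W_{-n}$ of dimension $3^n$, but routing the argument through the theorem is the cleanest option.
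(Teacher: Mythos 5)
Your argument is correct and is exactly the one the paper intends: the corollary is stated without proof as an immediate consequence of the preceding theorem, since each graded tensor product is identified with the parameter-free module $W_{-n}$ and composing the two isomorphisms gives the claim. The additional remarks about gradedness and the defining relations being parameter-free are fine but not needed beyond this.
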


\begin{cor}
Vectors \eqref{basis} form a basis of the Weyl module $W_{-n}$.
\end{cor}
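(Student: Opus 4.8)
The plan is to combine the spanning statement already established in Lemma~\ref{untwbas} with the dimension count coming from the fusion product realization. By Lemma~\ref{untwbas}, the monomials
\[
e_{a_1}\dots e_{a_s}g^+_{b_1}\dots g^+_{b_k}w_{-n},\qquad 0\le b_1<\dots<b_k\le n-1,\ 0\le a_1\le\dots\le a_s\le n-k-s,
\]
span $W_{-n}$; and by Lemma~\ref{-0} the cardinality of this set is exactly $3^n$. On the other hand, the preceding Theorem identifies $W_{-n}$ with the graded tensor product $V(z_1)*\dots*V(z_n)$, whose underlying vector space is the $n$-fold tensor power of the $3$-dimensional irreducible $\mosp(1,2)$-module $V$, hence $\dim W_{-n}=3^n$. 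Therefore a spanning set of size $3^n$ in a $3^n$-dimensional space must be linearly independent, so it is a basis.

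First I would invoke Lemma~\ref{untwbas} to fix the spanning property and record that the indexing set is precisely the one appearing in \eqref{basis}. Second I would invoke Lemma~\ref{-0} (the "straightforward" count) to conclude that this set has exactly $3^n$ elements. Third, from the Theorem and its Corollary I would read off $\dim W_{-n}=3^n$, noting that the fusion product construction does not change dimensions, since passing to the associated graded of a filtration on $\bigotimes_{i=1}^n V(z_i)$ preserves total dimension. Finally, a spanning set whose cardinality equals the dimension is automatically a basis, which finishes the argument.

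There is essentially no obstacle here: every ingredient has been proved earlier in the excerpt, and the only content is the elementary linear-algebra fact that a spanning set of size $\dim$ is a basis. If one wanted a self-contained argument avoiding the fusion product, the one potentially delicate point would be establishing the lower bound $\dim W_{-n}\ge 3^n$ directly — e.g.\ by exhibiting $3^n$ linearly independent vectors or by producing a module of dimension $3^n$ satisfying the defining relations — but since the Theorem already supplies exactly this via $V(z_1)*\dots*V(z_n)$, the matching upper bound from Lemma~\ref{-0} closes the gap immediately and the proof is complete.
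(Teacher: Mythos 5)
Your argument is exactly the one the paper intends: spanning from Lemma~\ref{untwbas}, the count $3^n$ from Lemma~\ref{-0}, and the lower bound $\dim W_{-n}=3^n$ supplied by the fusion-product theorem, whence a spanning set of cardinality equal to the dimension is a basis. This matches the paper's (implicit) proof, so there is nothing to add.
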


Now let us consider the twisted algebra $\mosp(1,2)[t]^\sigma$. 
For a complex number $z$ one defines the 3-dimensional evaluation $\mosp(1,2)[t]^\sigma$-module $V^\sigma(z)$ via the same formula as above.
We have the following theorem.
\begin{thm}
Assume that the numbers $z_1,\dots,z_n$ satisfy the conditions $z^2_i\ne z^2_j$ for $i\ne j$. Then
the graded tensor product $V^\sigma(z_1)*\dots *V\sigma(z_n)$ is well defined and is isomorphic
to the Weyl module $W^\sigma_{-n}$ as  $\mosp(1,2)[t]$ modules.
\end{thm}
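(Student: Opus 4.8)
Proof sketch (plan of the argument).
The plan is to transcribe, step by step, the argument used for $V(z_1)*\dots*V(z_n)$ in the untwisted case, replacing the Vandermonde bookkeeping in the $z_i$ by the corresponding bookkeeping in the $z_i^2$ (and in the $z_i$): the even generators $\msl_2\T t^{2k}$ only see even powers of $t$, while the odd generators $\pi_1\T t^{2k+1}$ only see odd powers. First I would check that the ordinary tensor product $\bigotimes_{i=1}^n V^\sigma(z_i)$ is a cyclic $\mosp(1,2)[t]^\sigma$-module with cyclic vector $v\T\dots\T v$, which is exactly what makes the $t$-degree filtration construction apply and hence what ``well defined'' means. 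For $x\in\msl_2$ the operator $x\T t^{2k}$ acts by $\sum_i (z_i^2)^k x(i)$, and since $((z_i^2)^k)_{0\le k\le n-1,\,1\le i\le n}$ is a Vandermonde matrix in the pairwise distinct numbers $z_i^2$, each $x(i)$ is a linear combination of the $x\T t^{2k}$, $0\le k\le n-1$. For $x\in\pi_1$ the operator $x\T t^{2k+1}$ acts on a tensor of weight-homogeneous vectors by $\sum_i (z_i^2)^k\varepsilon_i z_i\,x(i)$ with $\varepsilon_i=(-1)^{\deg u_1+\dots+\deg u_{i-1}}\in\{\pm1\}$; the same Vandermonde in the $z_i^2$ recovers the operators $\varepsilon_i z_i x(i)$, hence also $x(i)$ (here one uses $z_i\ne0$, which the hypothesis ``$z_i^2\ne z_j^2$'' should be understood to include, and $\varepsilon_i=\pm1$). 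Thus the cyclic vector generates, the exhaustive filtration $F_s=\U(\mosp(1,2)[t]^\sigma)_s(v\T\dots\T v)$ makes sense, and $V^\sigma(z_1)*\dots*V^\sigma(z_n)$ is its associated graded, a module of dimension $3^n$ since passing to $\mathrm{gr}$ of an exhaustive filtration of a finite-dimensional space preserves dimension.

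Next I would verify that the image $\bar v$ of $v\T\dots\T v$ in $V^\sigma(z_1)*\dots*V^\sigma(z_n)$ satisfies all the defining relations of $W^\sigma_{-n}$. Because $v$ is annihilated by $\fn^-=\mathrm{span}(f,g^-)$ and has $h$-weight $-1$, the operators $f_{2i}$ and $g^-_{2i+1}$ annihilate $v\T\dots\T v$ outright; the operator $h_{2i+2}$ sends $v\T\dots\T v$ to a scalar multiple of itself, which lies in $F_0\subseteq F_{2i+1}$, so its class in $\mathrm{gr}_{2i+2}F$ vanishes; $h_0\bar v=-n\bar v$; and $e_0^{n+1}$ annihilates $V^{\T n}$ for weight reasons (its top $h$-weight is $n$, while $-n+2(n+1)>n$). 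Integrability with respect to $\msl_2$ is automatic. This produces a surjection $W^\sigma_{-n}\twoheadrightarrow V^\sigma(z_1)*\dots*V^\sigma(z_n)$ of $\mosp(1,2)[t]^\sigma$-modules.

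Finally, by the twisted spanning lemma $W^\sigma_{-n}$ is spanned by the set \eqref{twbasis}, which by Lemma \ref{tw-0} has exactly $3^n$ elements; hence $\dim W^\sigma_{-n}\le 3^n=\dim V^\sigma(z_1)*\dots*V^\sigma(z_n)$, so the surjection of the previous step is an isomorphism, and as a byproduct \eqref{twbasis} is a basis of $W^\sigma_{-n}$.

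I expect the only genuinely delicate point to be the first step: one must confirm that ``$z_i^2\ne z_j^2$'' together with $z_i\ne0$ is precisely the condition under which both Vandermonde systems (the one for $\msl_2\T t^{\mathrm{even}}$ and the one for $\pi_1\T t^{\mathrm{odd}}$) are solvable, and that the sign twists $\varepsilon_i$ — being constant on each weight-homogeneous component of the tensor product — do not obstruct inverting those systems. Everything after that is a verbatim copy of the untwisted argument.
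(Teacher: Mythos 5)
Your proof is correct and is essentially the paper's own argument: the paper simply states that the untwisted proof carries over verbatim, with the condition $z_i^2\ne z_j^2$ replacing $z_i\ne z_j$ to guarantee cyclicity of the tensor product, and you have spelled out exactly that transcription (Vandermonde in the $z_i^2$, relation check on the cyclic vector, surjection from $W^\sigma_{-n}$, and the $3^n$ dimension bound from the twisted spanning lemma). Your remark that one must additionally assume $z_i\ne 0$ — since otherwise the odd operators $x(i)$ cannot be recovered and the evaluation module $V^\sigma(0)$ fails to be cyclic — is a valid point that the paper's statement and proof gloss over.
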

\begin{proof}
The only difference with the untwisted case is the condition $z_i^2\ne z_j^2$, which guaranties 
the cyclicity of the tensor product of evaluation modules in the twisted case.
\end{proof}
\begin{cor}
The graded tensor product $V^\sigma(z_1)*\dots *V^\sigma(z_n)$ does not depend (as $\mosp(1,2)[t]^\sigma$-module) 
on the parameters $z_i$, satisfying $z_i^2\ne z_j^2$ for all $i\ne j$.  
Vectors \eqref{twbasis} form a basis of the Weyl module $W^\sigma_{-n}$.
\end{cor}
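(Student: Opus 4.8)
The plan is to deduce both assertions from the preceding theorem, in complete parallel with the untwisted Corollary. First I would fix an arbitrary tuple $(z_1,\dots,z_n)$ of complex numbers with $z_i^2\ne z_j^2$ for $i\ne j$. By the theorem just proved, the fusion product $V^\sigma(z_1)*\dots *V^\sigma(z_n)$ is isomorphic, as a $\mosp(1,2)[t]^\sigma$-module, to the Weyl module $W^\sigma_{-n}$, and the latter is defined without any reference to the $z_i$. Hence any two admissible choices of the parameters produce isomorphic modules, which is the first claim.

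For the second claim I would run a dimension count. By construction, $V^\sigma(z_1)*\dots *V^\sigma(z_n)$ is the associated graded space of a finite, exhaustive filtration on the ordinary tensor product $\bigotimes_{i=1}^n V^\sigma(z_i)$, so it has the same total dimension, namely $3^n$ (each $V^\sigma(z_i)$ being $3$-dimensional). Combining this with the isomorphism furnished by the theorem gives $\dim W^\sigma_{-n}=3^n$. On the other hand, the twisted spanning lemma (the unnumbered lemma preceding Lemma \ref{tw-0}) shows that the monomials \eqref{twbasis} span $W^\sigma_{-n}$, and Lemma \ref{tw-0} counts exactly $3^n$ of them. A spanning set whose cardinality equals the dimension of the space is automatically a basis, which finishes the proof.

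I do not expect any real obstacle here: once the theorem is available, the corollary is pure bookkeeping. The only genuinely nontrivial ingredient — the cyclicity of $\bigotimes_{i=1}^n V^\sigma(z_i)$ over $\mosp(1,2)[t]^\sigma$, which is what forces the hypothesis $z_i^2\ne z_j^2$ — was already established in the proof of the theorem (the Vandermonde argument from the cyclicity lemma applies to the squared parameters, since $\msl_2$ occupies the degrees $t^{2i}$ and $\pi_1$ the degrees $t^{2i+1}$). If an independent verification of the dimension were desired, one could instead check that the bigraded character of the fusion product coincides with the expression in Lemma \ref{tw-0}; but this is not needed for the statement at hand.
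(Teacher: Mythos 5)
Your argument is correct and is exactly the derivation the paper intends (the paper states this corollary without proof, as an immediate consequence of the preceding theorem, in parallel with the untwisted case): the isomorphism with $W^\sigma_{-n}$ gives independence of the $z_i$, and the dimension count $3^n$ against the spanning set \eqref{twbasis} of cardinality $3^n$ from Lemma \ref{tw-0} upgrades spanning to a basis. No issues.
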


\subsection{The positive $n$ case}
In this subsection we define the modules $W_n$ for $n>0$. 
We first consider the untwisted case.

We define vector $w_n=e_0^nw_{-n}\subset W_{-n}$. We note that there is a symmetry (the $A_1$ Weyl group action) on $W_{-n}$
interchanging $e$ with $f$ and $g^+$ with $g^-$. This symmetry sends $w_{-n}$ to $w_n$ and vice versa.
Therefore, the module $W_{-n}$ enjoys the basis of the form
\begin{equation}
f_{a_1}\dots f_{a_s}g^-_{b_1}\dots g^-_{b_k}w_n,\ 0\le b_1<\dots<b_k\le n-1,\ 0\le a_1\le\dots\le a_s\le n-k-s. 
\end{equation}
 
Let $W_n=\U(\fn^-\T t\bC[t]).w_n\subset W_{-n}$, so $W_n$ is a module for the shifted Borel subalgebra, generated
by $g^-_1$ and $g^+_0$.    
In other words, the module $W_n$ is generated from the vector $w_n$ by the action
of the operators $f_i$ and $g^-_i$, $i>0$. 
\begin{prop}
$\dim W_n=3^{n-1}$. The vectors 
\begin{gather}\label{shift}
f_{a_1}\dots f_{a_s}g^-_{b_1}\dots g^-_{b_k}w_{-n},\\ \nonumber 
1\le b_1<\dots<b_k\le n-1,\ 1\le a_1\le\dots\le a_s\le n-s-k 
\end{gather}
form a basis of $W_n$. 
\end{prop}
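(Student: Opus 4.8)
The plan is to prove the dimension count and basis statement for $W_n$ by relating it to the already-established basis \eqref{basis} (equivalently, the $f,g^-$ version of it attached to the cyclic vector $w_n$), and then restricting attention to the submodule generated by $w_n$ under only the \emph{strictly positive} degree operators $f_i, g^-_i$ with $i>0$.

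First I would record that, by the $A_1$ Weyl group symmetry interchanging $e\leftrightarrow f$ and $g^+\leftrightarrow g^-$ (which sends $w_{-n}$ to $w_n$), the vectors \eqref{shift} with the relaxed bounds $0\le b_1<\dots<b_k\le n-1$, $0\le a_1\le\dots\le a_s\le n-k-s$ form a basis of all of $W_{-n}$; this is just the displayed basis right before the statement of the Proposition. So the set \eqref{shift} (with the constraints $b_i\ge 1$, $a_i\ge 1$) is automatically linearly independent, being a subset of a basis. It therefore suffices to show two things: (i) these vectors span $W_n$, and (ii) their cardinality equals $3^{n-1}$, which together with linear independence forces them to be a basis and pins down the dimension.

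For the spanning statement (i): $W_n=\U(\fn^-\T t\bC[t]).w_n$ is generated by the operators $f_i, g^-_i$ for $i\ge 1$ applied to $w_n$. Using the relation $[g^-_i,g^-_j]_+ = -2f_{i+j}$ (the twisted-free analogue of the relation used in Lemma \ref{untwbas}), every element of $\U(\fn^-\T t\bC[t])$ applied to $w_n$ can be rewritten as a linear combination of monomials $f_{a_1}\dots f_{a_s}g^-_{b_1}\dots g^-_{b_k}$ with the $g^-$ indices strictly increasing and all indices $\ge 1$. To bound the $f$-indices, I would reuse the filtration argument of Lemma \ref{untwbas}: filter $W_n$ by the PBW degree in the $g^-$ variables, and observe that on the associated graded piece corresponding to a fixed monomial $g^-_{b_1}\dots g^-_{b_k}w_n$, the cyclic vector satisfies the defining relations of a \emph{shifted} $\msl_2[t]$-Weyl-type module: it is killed by $(\fn^+\oplus\fh)\T t\bC[t]$ in the appropriate degrees, has a definite $h$-weight, and is generated under $f\T t\bC[t]$. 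One then needs the known basis of such a "shifted" $\msl_2$ module — vectors $f_{a_1}\dots f_{a_s}$ with $1\le a_1\le\dots\le a_s\le n-k-s$ — which follows from the standard results on $\msl_2[t]$ Weyl modules (e.g. \cite{CL}) together with the weight shift $-n+k$ coming from the $k$ applied $g^-$'s. This gives the spanning set \eqref{shift}.

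For the cardinality (ii): the number of vectors \eqref{shift} is $\sum_{k\ge 0}\binom{n-1}{k}\cdot\#\{0\le a_1\le\dots\le a_s\le n-1-k-s\}$, since choosing $1\le b_1<\dots<b_k\le n-1$ amounts to choosing a $k$-subset of an $(n-1)$-set, and after the substitution $a_i\mapsto a_i-1$ the $f$-part indices satisfy $0\le a_1\le\dots\le a_s\le (n-1)-k-s$. The inner count is exactly the dimension of the $\msl_2[t]$ Weyl module $D_{-(n-1-k)}$, which is $2^{n-1-k}$. Hence the total is $\sum_{k=0}^{n-1}\binom{n-1}{k}2^{n-1-k}=(1+2)^{n-1}=3^{n-1}$, as claimed. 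Combining (i), (ii), and the linear independence noted above completes the proof. The main obstacle is the careful bookkeeping in step (i): verifying that the associated graded pieces of the $g^-$-degree filtration really are governed by shifted $\msl_2[t]$ Weyl modules with the correct highest weight and degree-shift, so that the classical basis bounds $a_i\le n-k-s$ (in the shifted normalization $a_i\ge 1$) apply verbatim; everything else is combinatorics that mirrors Lemmas \ref{untwbas} and \ref{-0}.
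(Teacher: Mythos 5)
Your proposal is correct and follows essentially the same route as the paper: linear independence because \eqref{shift} is a subset of the $f,g^-$ basis of $W_{-n}$ obtained from the Weyl group symmetry, spanning via the $g^-$-degree filtration of Lemma \ref{untwbas} combined with the known basis of the part of the $\msl_2[t]$ Weyl module generated by $f_1,f_2,\dots$, and the count $\sum_{k}\binom{n-1}{k}2^{n-1-k}=3^{n-1}$ (which the paper leaves implicit). No issues.
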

\begin{proof}
The vectors \eqref{shift} belong to the basis of the module $W_{-n}$ and hence are linear independent.
Now we know that for the $2^{n-k}$ dimensional Weyl module for $\msl_2$ the part generated by $f_1,f_2,\dots$ from the lowest
weight vector has basis of the form $f_{a_1}\dots f_{a_s}$, where $1\le a_1\le\dots\le a_s\le n-s-k$.
Now using filtration from the proof of Lemma \ref{untwbas} we obtain that \eqref{shift} is indeed a basis.  
\end{proof}

\begin{cor}\label{+0}
The character of $W_n$ is equal to
\[
\sum_{k=0}^{n-1} q^{k(k+1)/2}\qb{n-1}{k}\sum_{s=0}^{n-k-1} x^{n-k-2s}q^s \qb{n-k-1}{s}.
\]
\end{cor}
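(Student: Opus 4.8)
The plan is to read the character straight off the monomial basis of $W_n$ exhibited in the preceding Proposition, namely the vectors
\[
f_{a_1}\cdots f_{a_s}\,g^-_{b_1}\cdots g^-_{b_k}\,w_n,\qquad 1\le b_1<\cdots<b_k\le n-1,\quad 1\le a_1\le\cdots\le a_s\le n-s-k,
\]
where $w_n=e_0^nw_{-n}$. First I would record the bidegree (the $h$-weight and the $t$-degree) of such a vector for the grading that $W_n$ inherits as a submodule of $W_{-n}$. Since $e_0=e\T 1$ raises the $h$-weight by $2$ and has $t$-degree $0$, the cyclic vector $w_n$ has $h$-weight $n$ and $t$-degree $0$; each factor $f_{a_i}=f\T t^{a_i}$ lowers the $h$-weight by $2$ and contributes $a_i$ to the $t$-degree; each factor $g^-_{b_j}=g^-\T t^{b_j}$ lowers the $h$-weight by $1$ and contributes $b_j$ to the $t$-degree. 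Hence this basis vector contributes the monomial $x^{\,n-k-2s}\,q^{\,a_1+\cdots+a_s+b_1+\cdots+b_k}$ to $\ch W_n(x,q)$, and since the listed vectors form a basis the character is the sum of all these monomials.

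Next I would sum over the indices, treating the $g^-$-block and the $f$-block separately, since they vary independently once $k$ and $s$ are fixed. For the $g^-$-block, the substitution $c_i=b_i-i$ turns $1\le b_1<\cdots<b_k\le n-1$ into $0\le c_1\le\cdots\le c_k\le n-1-k$ and shifts the total degree by $1+2+\cdots+k=k(k+1)/2$, so by the standard generating function for partitions inside a rectangle,
\[
\sum_{1\le b_1<\cdots<b_k\le n-1} q^{\,b_1+\cdots+b_k}=q^{k(k+1)/2}\qb{n-1}{k}.
\]
For the $f$-block, with $k$ fixed, the substitution $a'_i=a_i-1$ turns $1\le a_1\le\cdots\le a_s\le n-s-k$ into $0\le a'_1\le\cdots\le a'_s\le n-k-s-1$ and shifts the total degree by $s$, which gives $\sum q^{\,a_1+\cdots+a_s}=q^{s}\qb{n-k-1}{s}$.

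Multiplying the two generating functions, attaching the weight factor $x^{\,n-k-2s}$, and letting $k$ run over $0,\dots,n-1$ and $s$ over $0,\dots,n-k-1$ (the ranges forced by the inequalities, equivalently by the vanishing of the two Gaussian binomials outside them) produces exactly the asserted formula; setting $q=1$ recovers $\dim W_n=3^{n-1}$ as a consistency check. There is no genuine obstacle here: the only points deserving a line of care are that the grading inherited from $W_{-n}$ really does place $w_n$ in degree $0$ and is additive on monomials — immediate since $\fn^-\T t\bC[t]$ is concentrated in positive $t$-degree, and the Weyl-group symmetry $e\leftrightarrow f$, $g^+\leftrightarrow g^-$ used to produce the basis preserves each homogeneous component $\mosp(1,2)\T t^i$ — and that the $s$-dependent upper bound $n-s-k$ on the $a_i$ is correctly converted into the Gaussian binomial $\qb{n-k-1}{s}$ rather than $\qb{n-k}{s}$ or the like.
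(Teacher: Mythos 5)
Your computation is correct and is exactly the (unwritten) argument behind the paper's corollary: the character is read off from the basis \eqref{shift} established in the preceding Proposition, with the $g^-$-block contributing $q^{k(k+1)/2}\qb{n-1}{k}$ and the $f$-block contributing $q^{s}\qb{n-k-1}{s}$ via the standard box-counting interpretation of Gaussian binomials. The weight and degree bookkeeping, the index ranges, and the $q=1$ check against $\dim W_n=3^{n-1}$ are all as intended.
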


Now let us work out the twisted case.

We define vector $w^\sigma_n=e_0^nw^\sigma_{-n}\subset W^\sigma_{-n}$. Let
\[
W^\sigma_n=\U((\mosp(1,2)\T t\bC[t])^\sigma).w^\sigma_n\subset W_{-n},
\] 
i.e. $W^\sigma_n$ is a cyclic module for the shifted Borel subalgebra, generated by $g^-_1$ and $e_0$. 
The module $W^\sigma_n$ is generated from the vector $w_n$ by the action
of the operators $f_i$, $i=2,4,\dots,2n-2$ and $g^-_i$, $i=1,3,\dots,2n-1$. 
\begin{prop}
$\dim W_n=2\cdot 3^{n-1}$. The vectors 
\begin{gather}\label{twshift1}
f_{a_1}\dots f_{ a_s}g^-_{b_1}\dots g^-_{b_k}w^\sigma_n,\\ 
\nonumber
1\le b_1<\dots<b_k\le 2n-3,\ 2\le a_1\le\dots\le a_s\le 2(n-s-k)
\end{gather}
and
\begin{gather} 
\label{twshift2}
f_{a_1}\dots f_{ a_s}g^-_{b_1}\dots g^-_{b_{k-1}}g^-_{2n-1}w^\sigma_n,\\
\nonumber 1\le b_1<\dots<b_{k-1}\le 2n-3,\ 0\le a_1\le\dots\le a_s\le 2(n-s-k). 
\end{gather}
form a basis of $W^\sigma_n$.
\end{prop}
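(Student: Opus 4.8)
The plan is to argue as in the proof of the untwisted Proposition: establish linear independence of the vectors \eqref{twshift1}--\eqref{twshift2} and then show they span $W^\sigma_n$; since the two families together have $2\cdot 3^{n-1}$ members, the two bounds force the assertion $\dim W^\sigma_n=2\cdot 3^{n-1}$. For linear independence I would first note that, exactly as in the untwisted case, the $A_1$-Weyl reflection $e\leftrightarrow f$, $g^+\leftrightarrow g^-$, $h\mapsto -h$ (fixing $t$) preserves $\mosp(1,2)[t]^\sigma$ and induces an involution $\tau$ of $W^\sigma_{-n}$ sending $w^\sigma_{-n}$ to a nonzero multiple of $w^\sigma_n=e_0^n w^\sigma_{-n}$ (the weight $n$ space of $W^\sigma_{-n}$ being one dimensional). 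Transporting the basis \eqref{twbasis} through $\tau$, one sees that $W^\sigma_{-n}$ has the basis of all monomials $f_{a_1}\cdots f_{a_s}g^-_{b_1}\cdots g^-_{b_k}w^\sigma_n$ with $1\le b_1<\cdots<b_k\le 2n-1$ odd and $0\le a_1\le\cdots\le a_s\le 2(n-k-s)$ even. Among these, \eqref{twshift1} is precisely the subfamily with all $a_i\ge 2$ and $b_k\le 2n-3$, while \eqref{twshift2} is precisely the subfamily with $b_k=2n-1$; these subfamilies are disjoint, so \eqref{twshift1}--\eqref{twshift2} are linearly independent, and an elementary count gives $3^{n-1}$ vectors in each. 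Thus $\dim W^\sigma_n\ge 2\cdot 3^{n-1}$.

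For the spanning statement, applying $\tau$ once more it suffices to prove that $\tau(W^\sigma_n)=\U\bigl((\mosp(1,2)\T t\bC[t])^\sigma\bigr).w^\sigma_{-n}$, the submodule of $W^\sigma_{-n}$ generated from the lowest weight vector by the positive part of the twisted current algebra, is spanned by the $\tau$-images of \eqref{twshift1}--\eqref{twshift2}. As in the proof of Lemma \ref{untwbas}, a PBW reordering, the relation $[g^+_i,g^+_j]_+=2e_{i+j}$, the identities $f_{2i}.w^\sigma_{-n}=h_{2i}.w^\sigma_{-n}=g^-_{2j+1}.w^\sigma_{-n}=0$, and the vanishing of $\msl_2\T t^{2n}\bC[t^2]$ and $\pi_1\T t^{2n+1}\bC[t^2]$ on $W^\sigma_{-n}$ reduce $\tau(W^\sigma_n)$ to $\sum \bC[e_2,\dots,e_{2n-2}]\,g^+_{b_1}\cdots g^+_{b_k}w^\sigma_{-n}$, the sum being over all $k\ge 0$ and all odd $1\le b_1<\cdots<b_k\le 2n-1$. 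One then introduces the filtration of Lemma \ref{untwbas} (order the monomials $g^+_{b_1}\cdots g^+_{b_k}$; filter by the $\bC[e_2,\dots,e_{2n-2}]$-submodules they generate) and analyses the associated graded.

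The step I expect to be the main obstacle is the description of these graded pieces, which — unlike in the untwisted case — are not simply the positive parts of $\msl_2$-Weyl modules, because the top odd generator $g^+_{2n-1}$ does \emph{not} annihilate $w^\sigma_{-n}$ (in contrast to the degree $\ge n$ generators in the untwisted story). The governing relation is the identity in $W^\sigma_{-n}$, verified on the fusion product $V^\sigma(z_1)*\cdots *V^\sigma(z_n)$ using $z_i^2\ne z_j^2$,
\[
e_2\,g^+_{b}\,g^+_{b_1}\cdots g^+_{b_{k-1}}w^\sigma_{-n}\ \equiv\ c\cdot e_0\,g^+_{b+2}\,g^+_{b_1}\cdots g^+_{b_{k-1}}w^\sigma_{-n},\qquad c\ne 0,
\]
valid modulo lower filtration terms whenever $b+2\le 2n-1$. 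Using it, together with the basis \eqref{twbasis} to detect when monomials vanish, one rewrites every vector $e_{2i_1}\cdots e_{2i_l}g^+_{b_1}\cdots g^+_{b_k}w^\sigma_{-n}$ as a combination of vectors in \eqref{twshift1}--\eqref{twshift2}: the ``excess'' applications of $e_2,e_4,\dots$ to monomials with $b_k\le 2n-3$ get converted into applications of $e_0$ to monomials containing $g^+_{2n-1}$, which after $\tau$ are exactly the occurrences of $f_0$ in \eqref{twshift2}. Hence the $\tau$-images of \eqref{twshift1}--\eqref{twshift2} span $\tau(W^\sigma_n)$, so $\dim W^\sigma_n\le 2\cdot 3^{n-1}$; combined with the lower bound this proves $\dim W^\sigma_n=2\cdot 3^{n-1}$ and that \eqref{twshift1}--\eqref{twshift2} is a basis of $W^\sigma_n$.
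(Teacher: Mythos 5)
Your linear-independence step is sound and agrees with what the paper does, but there are two genuine gaps. First, linear independence of the vectors \eqref{twshift1}--\eqref{twshift2} inside $W^\sigma_{-n}$ does not by itself give $\dim W^\sigma_n\ge 2\cdot 3^{n-1}$: for that you must know that the vectors actually lie in $W^\sigma_n$. This is clear for \eqref{twshift1}, but the vectors \eqref{twshift2} allow $a_1=0$, and $f_0\notin(\mosp(1,2)\T t\bC[t])^\sigma$, so membership is precisely the nontrivial point. The paper's proof spends its first half on it: $f_0$ only ever occurs in \eqref{twshift2} together with $g^-_{2n-1}$, one reduces (by $e_0$-invariance of $W^\sigma_n$ and a weight count) to showing $f_0^{n-1}g^-_{2n-1}w^\sigma_n\in W^\sigma_n$, and this follows because the relevant weight space is one-dimensional, so this vector is proportional to $f_2^{n-1}g^-_1 w^\sigma_n\in W^\sigma_n$, which is nonzero since applying $e_0^{n-1}$ to it yields a nonzero multiple of $g^-_{2n-1}w^\sigma_n$. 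Your proposal never supplies this; the only place it could come from --- the ``governing identity'' in your spanning step --- is itself unproved.

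Second, your upper bound (spanning) is a plan rather than a proof: the identity $e_2\,g^+_b\cdots\equiv c\,e_0\,g^+_{b+2}\cdots$ with $c\ne 0$ modulo ``lower filtration terms'' is asserted, not established (which filtration is meant, why $c\ne 0$, and why the rewriting terminates and stays inside $\tau(W^\sigma_n)$ are all left open), and you yourself flag this as the main obstacle. The paper avoids the straightening entirely with a short quotient argument: the relations of $W^\sigma_{-n}$ are generated by $e_0^{n+1}w^\sigma_{-n}$, and the generator $e_0^{n}w^\sigma_{-n}$ of $W^\sigma_n$ dies in $W^\sigma_{-n}/W^\sigma_n$, whence $\dim W^\sigma_{-n}/W^\sigma_n\ge\dim W^\sigma_{-n+1}=3^{n-1}$ and so $\dim W^\sigma_n\le 2\cdot 3^{n-1}$. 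I would replace your spanning sketch by this quotient argument and add the membership argument for the vectors containing $f_0$.
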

\begin{proof}
We first prove that the elements \eqref{twshift2} belong to $W^\sigma_n$ (this is obvious for \eqref{twshift1}, but not
for \eqref{twshift2}).
The only problem is the operator $f_0$ popping up in \eqref{twshift2}. However, it always comes multiplied by $g^-_{2n-1}$.
Therefore, we only need to check that $f_0^mg_{2n-1}^-w_n\in W_n^\sigma$ for all $m>0$. First, we note that
by the weight reason $f_0^ng_{2n-1}^-w_n=0$. Second, it suffices to prove the claim for $m=n-1$, since $W^\sigma_n$ is 
$e_0$-invariant. Third, the vectors $f_0^{n-1}g_{2n-1}^-w_n$ is proportional to $f_2^{n-1}g_1^-w_n$. Indeed, 
since the weight space containing both vectors is one-dimensional, we only need to check that $f_2^{n-1}g_1^-w_n\ne 0$.
Assume that this vector vanishes. Then $e_0^{n-1} f_2^{n-1}g_1^-w_n=0$. However, up to a nonzero constant, this vector
is equal to  $g^-_{2n-1}w_n$, which does not vanish. 

So we know that all the vectors \eqref{twshift1} and \eqref{twshift2} belong to $W_n^\sigma$. We also know that
they are linearly independent, since they belong to the basis \eqref{twbasis} of the whole space $W^\sigma_{-n}$.
Since the sets \eqref{twshift1} and \eqref{twshift2} contain $2\cdot 3^{n-1}$ elements, we are left to show that
the dimension of $W^\sigma_{-n}/W^\sigma_n\ge 3^{n-1}$. We know that relations in $W^\sigma_{-n}$ are generated by $e_0^{n+1}w_{-n}$.
Since the vector $e_0^nw_{-n}$ is trivial in the quotient, we conclude that 
\[
\dim W^\sigma_{-n}/W^\sigma_n\ge \dim W^\sigma_{-n+1}=3^{n-1}.
\]   
\end{proof}

\begin{cor}\label{tw+0}
The character of $W^\sigma_n$ is equal to
\begin{multline*}
\sum_{k=0}^{n-1} q^{k^2}\qsb{n-1}{k}\sum_{s=0}^{n-k-1} x^{n-k-2s}q^{2s} \qsb{n-k-1}{s} +\\ 
x^{-1}q^{2n-1}\sum_{k=0}^{n-1} q^{k^2}\qsb{n-1}{k}\sum_{s=0}^{n-k-1} x^{n-k-2s} \qsb{n-k-1}{s}.
\end{multline*}
\end{cor}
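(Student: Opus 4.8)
The plan is to read the character straight off the monomial basis $\eqref{twshift1}\cup\eqref{twshift2}$ supplied by the preceding Proposition and then repackage the two resulting sums as $q^2$-binomial coefficients, exactly as was done for Lemmas $\ref{-0}$ and $\ref{tw-0}$. The only preliminary point is the bigrading of a basis monomial. The submodule $W^\sigma_n=\U((\mosp(1,2)\T t\bC[t])^\sigma).w^\sigma_n$ inherits the grading of $W^\sigma_{-n}$, and its cyclic vector $w^\sigma_n=e_0^nw^\sigma_{-n}$ is homogeneous of $\fh$-weight $n$ and $t$-degree $0$ (since $e_0=e\T 1$ has $t$-degree $0$). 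Each factor $f_a=f\T t^a$ then changes the weight by $-2$ and the degree by $+a$, and each factor $g^-_b=g^-\T t^b$ changes the weight by $-1$ and the degree by $+b$. Hence $\ch W^\sigma_n(x,q)$ is the sum over all monomials appearing in $\eqref{twshift1}$ and in $\eqref{twshift2}$ of $x^{(\mathrm{weight})}q^{(\mathrm{degree})}$, and it remains to evaluate these two sums.

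Consider first a monomial from $\eqref{twshift1}$ with $s$ factors $f_a$ and $k$ factors $g^-_b$; its weight is $n-k-2s$. The $g^-$-contribution is $\sum q^{b_1+\cdots+b_k}$ over $k$-element subsets $\{b_1<\cdots<b_k\}$ of the odd integers $\{1,3,\dots,2n-3\}$; substituting $b_j=2c_j-1$ with $1\le c_1<\cdots<c_k\le n-1$ turns this into $q^{-k}\sum q^{2(c_1+\cdots+c_k)}=q^{-k}q^{k(k+1)}\qsb{n-1}{k}=q^{k^2}\qsb{n-1}{k}$. The $f$-contribution is $\sum q^{a_1+\cdots+a_s}$ over even indices $2\le a_1\le\cdots\le a_s\le 2(n-s-k)$; substituting $a_i=2d_i$ and then $d_i'=d_i-1$ turns it into $q^{2s}$ times the generating function of partitions fitting in an $s\times(n-k-s-1)$ box, i.e. $q^{2s}\qsb{n-k-1}{s}$ (so that $0\le s\le n-k-1$). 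Multiplying these by $x^{n-k-2s}$ and summing over $k$ and $s$ reproduces the first double sum.

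For $\eqref{twshift2}$ the computation is the same except for three changes, which are exactly what produces the second sum. Write $k$ now for the number of free $g^-$-indices $b_j$, so a monomial has $k+1$ factors $g^-$ and $s$ factors $f_a$ and weight $n-k-2s-1$; the obligatory factor $g^-_{2n-1}$ contributes the global prefactor $x^{-1}q^{2n-1}$; the free $g^-$-indices form a $k$-element subset of $\{1,3,\dots,2n-3\}$, so the substitution $b_j=2c_j-1$ again yields $q^{k^2}\qsb{n-1}{k}$; and the $f$-indices now satisfy $0\le a_1\le\cdots\le a_s\le 2(n-s-k-1)$ with lower bound $0$ rather than $2$, so the $f$-contribution is $\qsb{n-k-1}{s}$ with no $q^{2s}$. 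Multiplying by $x^{n-k-2s-1}$ and summing gives the second double sum, and adding the two contributions proves the Corollary. The one step demanding genuine care is this $q^2$-binomial bookkeeping --- noticing that the parity restriction on the $g^-$-indices is what manufactures the $q^{k^2}$ weights, and that the difference between the lower bounds $2$ and $0$ on the $a_i$ is precisely what deletes the $q^{2s}$ factor in the second sum; one should also verify the boundary case $k=n-1$, where the $s$-sum collapses to its $s=0$ term and the formula still checks out. Everything else is the routine identification of the generating function for partitions inside an $s\times m$ rectangle with $\qsb{s+m}{s}$.
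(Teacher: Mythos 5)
Your computation is correct and is exactly what the paper intends: the corollary is stated without proof as an immediate consequence of the basis \eqref{twshift1}, \eqref{twshift2}, and your argument simply carries out the weight/degree bookkeeping (the substitution $b_j=2c_j-1$ producing $q^{k^2}\qsb{n-1}{k}$, and the shift of the lower bound on the $a_i$ from $2$ to $0$ accounting for the presence or absence of $q^{2s}$) that the authors leave to the reader. The sanity checks at $q=x=1$ (giving $2\cdot 3^{n-1}$) and at $k=n-1$ confirm the ranges of summation.
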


\subsection{The limit procedure}
In this subsection we consider the $n\to\infty$ limits of the modules $W_{-n}$ and $W_{-n}^\sigma$.
\begin{lem}\label{emb}
For $n>0$ there exists an embedding of $\mosp(1,2)[t]$ modules $W_{-n}\to W_{-n-1}$, defined by  
$w_{-n}\mapsto g^+_nw_{-n-1}$.  
\end{lem}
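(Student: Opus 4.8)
The plan is to exhibit a surjective homomorphism of $\mosp(1,2)[t]$-modules $\phi\colon W_{-n}\to \U(\mosp(1,2)[t]).g^+_nw_{-n-1}\subseteq W_{-n-1}$ sending $w_{-n}$ to $g^+_nw_{-n-1}$, and then to upgrade it to an embedding by a dimension count. For the first part one only has to check that $v:=g^+_nw_{-n-1}$ satisfies the defining relations of $W_{-n}$; the universal property of $W_{-n}$ then produces $\phi$, which is automatically onto $\U(\mosp(1,2)[t]).v$.

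First I would run through the defining relations, using only the bracket relations of $\mosp(1,2)$ and the fact (Lemma~\ref{untwbas}, applied with $n$ replaced by $n+1$) that $\mosp(1,2)\T t^{n+1}\bC[t]$ annihilates $w_{-n-1}$. From $[h,g^+]=g^+$ we get $h_0.v=\big(-(n+1)+1\big)v=-nv$, the correct weight. For $(\fn^-\T 1).v=0$: $[f,g^+]=g^-$ gives $f_0.v=g^+_nf_0w_{-n-1}+g^-_nw_{-n-1}$ and $[g^+,g^-]_+=h$ gives $g^-_0.v=-g^+_ng^-_0w_{-n-1}+h_nw_{-n-1}$; all four terms vanish, and it is exactly here that the hypothesis $n>0$ is used, ensuring $g^-_n,h_n\in(\fn^-\oplus\fh)\T t\bC[t]$ kill $w_{-n-1}$. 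For $(\fn^-\oplus\fh)\T t\bC[t].v=0$, the same computations with $i\ge1$ give $f_i.v=g^+_nf_iw_{-n-1}+g^-_{n+i}w_{-n-1}$, $g^-_i.v=-g^+_ng^-_iw_{-n-1}+h_{n+i}w_{-n-1}$ and $h_i.v=g^+_nh_iw_{-n-1}+g^+_{n+i}w_{-n-1}$: the first summand of each vanishes because $w_{-n-1}$ satisfies the Weyl-module relations, and the second because $n+i\ge n+1$, so $\mosp(1,2)\T t^{n+1}\bC[t]$ annihilates $w_{-n-1}$. Finally, $[e,g^+]=0$ gives $e_0^{n+1}.v=g^+_ne_0^{n+1}w_{-n-1}$; the vector $e_0^{n+1}w_{-n-1}$ has $h$-weight $n+1$, so $g^+_ne_0^{n+1}w_{-n-1}$ has weight $n+2$, which forces it to vanish: by the spanning set \eqref{basis} of $W_{-n-1}$ every monomial $e_{a_1}\dots e_{a_s}g^+_{b_1}\dots g^+_{b_k}w_{-n-1}$ has $s+k\le n+1$, hence weight $2s+k-(n+1)\le n+1$. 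Thus $v$ satisfies all the relations and $\phi$ is well defined.

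Then I would establish injectivity. We already know that $\dim W_{-n}=3^n$ (Lemma~\ref{-0}) and that the monomials \eqref{basis} form a basis of $W_{-n-1}$. Applying $\phi$ to the spanning monomials $e_{a_1}\dots e_{a_s}g^+_{b_1}\dots g^+_{b_k}w_{-n}$ of $W_{-n}$, where $0\le b_1<\dots<b_k\le n-1$ and $0\le a_1\le\dots\le a_s\le n-k-s$, one gets $e_{a_1}\dots e_{a_s}g^+_{b_1}\dots g^+_{b_k}g^+_nw_{-n-1}$, which is exactly the basis vector \eqref{basis} of $W_{-n-1}$ attached to the strictly increasing sequence $b_1<\dots<b_k<n$ of $g^+$-indices (the constraint on the $a_i$ agrees, since $n-k-s=(n+1)-(k+1)-s$). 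Distinct monomials of $W_{-n}$ therefore map to distinct basis vectors of $W_{-n-1}$, so the $3^n$ images are linearly independent; since they span $\phi(W_{-n})$ and $\dim W_{-n}=3^n$, the map $\phi$ is injective, i.e.\ an embedding.

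The only point that is not a mere formality is the vanishing $e_0^{n+1}.g^+_nw_{-n-1}=0$; every other relation follows immediately from the appropriate bracket. I expect the weight estimate above to settle it, but one could alternatively invoke the fusion-product realization $W_{-n-1}\cong V*\dots*V$ with $n+1$ factors, which makes manifest that $n+1$ is the top $h$-weight of $W_{-n-1}$.
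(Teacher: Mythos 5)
Your proof is correct and follows essentially the same route as the paper: verify that $g^+_nw_{-n-1}$ satisfies the defining relations of $W_{-n}$ to get a surjection onto $\U(\mosp(1,2)[t])g^+_nw_{-n-1}$, then show injectivity by matching the $3^n$ spanning monomials of $W_{-n}$ with distinct basis vectors \eqref{basis} of $W_{-n-1}$. Your explicit weight argument for $e_0^{n+1}.g^+_nw_{-n-1}=0$ fills in a relation the paper's proof passes over in silence, which is a welcome addition.
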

\begin{proof}
First, we show that the vector $g^+_nw_{-n-1}$ satisfies all the annihilation conditions for the cyclic vector of
$W_{-n}$. Clearly, $h_0g^+_nw_{-n-1}=-ng^+_nw_{-n-1}$. Now, for $x\in\fn^-$ and $k\ge 0$ one has
$x_kg^+_nw_{-n-1}=0$ (because $x_kw_{-n-1}=0$) and $[x_k,g^+_n]w_{-n-1}=0$ because
\[
[x_k,g^+_n]\subset (\fn^-\oplus\fh)\T t\bC[t]\oplus \bC g^+\T t^n\bC[t].
\]
Finally, for $k>0$ $h_kg^+_nw_{-n-1}=0$, because $[h_k,g^+_n]=g^+_{n+k}$.
We conclude that there exists a surjective homomorphism of $\mosp(1,2)[t]$-modules $W_{-n}\to \U(\mosp(1,2)[t])g^+_nw_{-n-1}$.

Second, we check that $\dim\U(\mosp(1,2)[t])g^+_nw_{-n-1}\ge 3^n$. Indeed, the vectors 
\[
e_{a_1}\dots e_{a_s}g^+_{b_1}\dots g^+_{b_k}g^+_nw_{-n-1} 
\]
with $0\le b_1<\dots<b_k\le n-1$ and $0\le a_1\le\dots\le a_s\le n-k-s$
are linearly independent, since they belong to the set of basis vectors \eqref{basis} (with $n+1$ instead of $n$).
The number of these elements is $3^n$. 
\end{proof}

Let $L=\lim_{n\to\infty} W_{-n}$ be the $\mosp(1,2)[t]$-module, obtained via the embeddings from Lemma \ref{emb}.
We define the character of $L$ as follows:
\[
\ch L(x,q)= \lim_{n\to\infty} q^{n(n-1)/2}\ch W_{-n}(x,q^{-1}).
\]
Our goal is to find a formula for the character of $L$. The following lemma is well known (see e.g. \cite{A}).
\begin{lem}\label{wedge}
$\sum_{k\ge 0} \frac{q^{k(k-1)/2}}{(q)_k}x^k=\prod_{i=0}^\infty (1+q^ix)$.
\end{lem}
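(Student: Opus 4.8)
The statement to prove is Lemma~\ref{wedge}, the $q$-exponential-type identity
\[
\sum_{k\ge 0}\frac{q^{k(k-1)/2}}{(q)_k}x^k=\prod_{i=0}^{\infty}(1+q^ix),
\]
which is a classical special case of the $q$-binomial theorem. My plan is to prove it directly by establishing a functional equation for the product side and matching coefficients. Write $P(x)=\prod_{i=0}^{\infty}(1+q^ix)$ and $S(x)=\sum_{k\ge 0}c_k x^k$ with $c_k=q^{k(k-1)/2}/(q)_k$, both regarded as formal power series in $x$ with coefficients in $\bC[[q]]$ (convergence for $|q|<1$ is a standard afterthought).

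\textbf{Step 1: functional equation.} The product satisfies
\[
P(x)=(1+x)\prod_{i=0}^{\infty}(1+q^{i+1}x)=(1+x)P(qx).
\]
I claim $S$ satisfies the same recursion $S(x)=(1+x)S(qx)$. Expanding, $(1+x)S(qx)=\sum_{k\ge 0}q^k c_k x^k+\sum_{k\ge 1}q^{k-1}c_{k-1}x^k$, so the coefficient of $x^k$ (for $k\ge 1$) is $q^kc_k+q^{k-1}c_{k-1}$; I need this to equal $c_k$, i.e.
\[
c_k(1-q^k)=q^{k-1}c_{k-1}.
\]
This is immediate from the definition: $c_k/c_{k-1}=q^{k(k-1)/2-(k-1)(k-2)/2}\cdot (q)_{k-1}/(q)_k=q^{k-1}/(1-q^k)$. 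The constant terms also agree: $S(0)=c_0=1=P(0)$.

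\textbf{Step 2: uniqueness.} Both $S$ and $P$ are formal power series with constant term $1$ satisfying $f(x)=(1+x)f(qx)$. Writing any such $f(x)=\sum a_k x^k$, the recursion forces $a_0=1$ and $a_k(1-q^k)=q^{k-1}a_{k-1}$ for $k\ge 1$; since $1-q^k\ne 0$ in $\bC[[q]]$ (it is a unit-free nonzero element, and we may work in the fraction field $\bC((q))$ or simply over $\bC$ for fixed $|q|<1$), all coefficients are determined recursively. Hence $S(x)=P(x)$ as formal power series, and the identity holds as an identity of analytic functions for $|q|<1$, $x\in\bC$.

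The argument is entirely routine and I do not anticipate a genuine obstacle; the only point requiring a word of care is the legitimacy of manipulating $P(x)$ as a formal power series — one should note that $\prod_{i\ge 0}(1+q^ix)$ has a well-defined coefficient of each $x^k$ as a polynomial in $q$ (the $k$-th coefficient is $\sum q^{i_1+\cdots+i_k}$ over $0\le i_1<\cdots<i_k$, a convergent series for $|q|<1$), so the product genuinely defines an analytic function and the rearrangement $P(x)=(1+x)P(qx)$ is valid. An alternative, if one prefers a combinatorial proof, is to read off the coefficient of $x^k$ on both sides: on the right it is $\sum_{0\le i_1<\cdots<i_k}q^{i_1+\cdots+i_k}=q^{0+1+\cdots+(k-1)}\sum_{0\le j_1\le\cdots\le j_k}q^{j_1+\cdots+j_k}=q^{k(k-1)/2}/(q)_k$, using the standard generating function $\sum_{\text{partitions into}\le k\text{ parts}}q^{|\lambda|}=1/(q)_k$; this reproduces $c_k$ directly. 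I would include the functional-equation proof as the main argument since it is shortest, mentioning the combinatorial reading as a remark.
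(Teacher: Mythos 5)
Your proof is correct. Note that the paper itself gives no proof of this lemma: it is stated as ``well known'' with a reference to Andrews's \emph{The Theory of Partitions} (it is Euler's identity, the special case of the $q$-binomial theorem with one parameter set to $0$). So there is no argument in the paper to compare against; you have simply supplied the standard proof. Both of your routes are sound: the functional-equation argument correctly reduces to the coefficient recursion $c_k(1-q^k)=q^{k-1}c_{k-1}$, which your ratio computation verifies, and the uniqueness step is legitimate since $1-q^k$ is invertible in $\bC((q))$ (or is nonzero for fixed $|q|<1$). Your combinatorial remark --- reading the coefficient of $x^k$ in the product as $\sum_{0\le i_1<\cdots<i_k}q^{i_1+\cdots+i_k}=q^{k(k-1)/2}/(q)_k$ via the shift $i_j\mapsto i_j-(j-1)$ and the generating function for partitions into at most $k$ parts --- is an equally valid one-line proof and arguably closer in spirit to how the identity is used in the paper, where the product side enumerates the wedge monomials $g^+_{b_1}\cdots g^+_{b_k}$ with $0\le b_1<\cdots<b_k$. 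Either version is acceptable here.
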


\begin{thm}
$\ch L(x,q)=\prod_{i=0}^\infty (1+q^ix) \prod_{i=0}^\infty (1+q^ix^{-1})$.
\end{thm}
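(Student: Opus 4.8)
The plan is to compute the limit character directly from the explicit formula for $\ch W_{-n}$ in Lemma~\ref{-0} and then recognize the answer via Lemma~\ref{wedge}. Recall that
\[
\ch W_{-n}(x,q)=\sum_{k=0}^n q^{k(k-1)/2}\qb{n}{k}\sum_{s=0}^{n-k} x^{-n+k+2s}\qb{n-k}{s}.
\]
First I would substitute $q\mapsto q^{-1}$ and multiply by $q^{n(n-1)/2}$, using the standard identities $\qb{n}{k}_{q^{-1}}=q^{-k(n-k)}\qb{n}{k}_q$ and $q^{-k(k-1)/2}=q^{k(k-1)/2}\cdot q^{-k(k-1)}$, together with the corresponding identity for the inner $q$-binomial, to rewrite everything back in terms of positive powers of $q$. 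The point is that the prefactor $q^{n(n-1)/2}$ is engineered precisely so that after the substitution each term acquires a finite $q$-degree that stabilizes as $n\to\infty$.

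Next I would reorganize the double sum. Writing the exponent of $x$ as $-n+k+2s$, I set $j=n-k-s$ (so that the $x$-power is $-j+(n-k-s)-(n-k)+2s-\dots$; more precisely one tracks the two "wings'' separately, the $g^+$'s contributing an $x^{+1}$-type tail and the $e$'s an $x^{-1}$-type tail around a shifting center). After the $q\mapsto q^{-1}$ rescaling, the $k$-sum with weights $q^{k(k-1)/2}\qb{n}{k}$ tends, by the $n\to\infty$ limit of the Gauss binomial $\qb{n}{k}\to 1/(q)_k$ and Lemma~\ref{wedge}, to the factor $\prod_{i\ge 0}(1+q^i x)$ (this is the fermionic/odd part coming from the $g^+_b$'s, which live at distinct "slots''), while the remaining $s$-sum with weights $q^{\binom{n-k-s}{2}}\qb{n-k}{s}$ — again via $\qb{n-k}{s}\to 1/(q)_s$ after the appropriate rescaling — contributes $\prod_{i\ge 0}(1+q^i x^{-1})$ (the bosonic $e_a$-part, which by the symmetry $e\leftrightarrow f$, $g^+\leftrightarrow g^-$ sits on the opposite side). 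I would carry out this bookkeeping carefully, checking that the centering $x$-powers combine to give exactly the two infinite products with no leftover monomial, and that convergence is in the sense of formal power series in $q$ with Laurent-polynomial-in-$x$ coefficients (each coefficient of $q^m$ stabilizes for $n\gg m$, which is also what makes the inductive limit $L$ and its character well defined, consistently with Lemma~\ref{emb}).

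The main obstacle I expect is the exponent bookkeeping in the inner sum: after the $q\mapsto q^{-1}$, $q^{n(n-1)/2}$ rescaling the two nested $q$-binomials must be massaged so that the surviving $q$-powers are manifestly of the shapes $q^{k(k-1)/2}$ and $q^{s(s-1)/2}$ (up to the limiting $1/(q)_k$ and $1/(q)_s$ denominators) rather than something $n$-dependent; getting the quadratic exponents and the $x$-shifts to line up is the delicate point, and it is essentially forced by the structure of the basis \eqref{basis} (distinct indices $b_i$ for the odd generators, weakly increasing bounded indices $a_i$ for the even ones). As a sanity check I would verify the identity at $x\to$ generic and compare low-order terms in $q$ against $\ch W_{-n}$ for small $n$ (e.g. $n=1,2$, where $\ch W_{-1}=x+1+x^{-1}$), and note that specializing consistently recovers the classical $\msl_2$ limit formulas for $\lim D_{-n}$ quoted earlier, since $W_{-n}$ is filtered by those Weyl modules.
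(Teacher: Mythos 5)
Your strategy is sound and, carried out correctly, it does prove the theorem; it is essentially the same computation as the paper's, just organized at the level of characters rather than bases. The paper reindexes the basis \eqref{basis} by the complements of the sets $\{b_1,\dots,b_k\}$ (the ``holes'' in $l_0(n)=g^+_0\cdots g^+_{n-1}w_{-n}$), computes the character of the limit basis \eqref{limbasis} directly, and then applies Lemma \ref{wedge} twice; your $q\mapsto q^{-1}$ rescaling by $q^{n(n-1)/2}$ is precisely the character-level shadow of that complementation. One concrete correction at exactly the point you flag as delicate: the two infinite products do \emph{not} arise as (outer $k$-sum) $\times$ (inner $s$-sum), i.e.\ they are not the $g^+$-part and the $e$-part separately. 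The right substitution is $m=n-k$ (not $j=n-k-s$), after which, using $\qb{n}{k}_{q^{-1}}=q^{-k(n-k)}\qb{n}{k}$, $\qb{n-k}{s}_{q^{-1}}=q^{-s(n-k-s)}\qb{n-k}{s}$ and the identity $\frac{m(m-1)}{2}-s(m-s)=\frac{s(s-1)}{2}+\frac{(m-s)(m-s-1)}{2}$, one finds
\[
q^{n(n-1)/2}\,\ch W_{-n}(x,q^{-1})=\sum_{m=0}^{n}\sum_{s=0}^{m} q^{\frac{s(s-1)}{2}+\frac{(m-s)(m-s-1)}{2}}\,x^{2s-m}\,\qb{n}{m}\qb{m}{s}.
\]
Letting $n\to\infty$ (legitimate coefficientwise: $\qb{n}{m}\to 1/(q)_m$, and for each fixed power of $q$ only finitely many $(m,s)$ contribute since the exponent grows like $m^2/4$) and writing $\frac{1}{(q)_m}\qb{m}{s}=\frac{1}{(q)_s(q)_{m-s}}$, the double sum factors over $s$ and $m-s$, and Lemma \ref{wedge} applied to each factor gives $\prod_{i\ge 0}(1+q^i x)\prod_{i\ge 0}(1+q^i x^{-1})$. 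So the $x^{+1}$-product is indexed by $s$ (the $e$'s) and the $x^{-1}$-product by $m-s$ (holes not compensated by $e$'s); with that fix your argument is complete, and your normalization check (the prefactor $q^{n(n-1)/2}$ exactly cancels the most negative power) and the sanity check $\ch W_{-1}=x+1+x^{-1}$ are both correct.
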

\begin{proof}
Recall the basis \eqref{basis} of the space $W_{-n}$. 
Let $l_0(n)\in W_{-n}$ be the vector $g^+_0g^+_1\dots g^+_{n-1}w_{-n}$. We note that the embedding $W_{-n}\to W_{-n-1}$
sends $l_0(n)$ to $l_0(n+1)$. Therefore, in the limit we obtain the vector $l_0=\lim_{n\to\infty} l_0(n)\in L$.
We note that the summand in the character of $L$ corresponding to $l_0$, is just $1$(=$x^0q^0$). It is convenient
to parametrize the images of the elements  $g^+_{b_1}\dots g^+_{b_k}w_{-n}$, $0\le b_1\le\dots\le b_k\le n-1$ in the
limit space $L$ by the set
\[
g^-_{-c_1}\dots g^-_{-c_k}l_0,\ 0\le c_1<\dots< c_k
\]
(although we do not have the action of the operators with negative powers of $t$).  
Then we obtain the basis of $L$ of the form
\begin{equation}\label{limbasis}
e_{a_1}\dots e_{a_s}g^-_{-c_1}\dots g^-_{-c_k}l_0,\ 0\le c_1<\dots< c_k,\ 0\le a_1\le\dots\le a_s\le k-s.
\end{equation}
The character of the basis vectors  \eqref{limbasis} with fixed $k$ and $s$ is equal to
\begin{multline*}
x^{-k+2s}\frac{q^{k(k-1)/2}}{(q)_k}\binom{k}{s}_{q^{-1}}=\frac{x^{-k+2s}q^{k(k-1)/2-k(s-k)}}{(q)_s(q)_{k-s}}=\\
\frac{q^{s(s-1)/2}x^{s}}{(q)_s}\times\frac{q^{(k-s)(k-s-1)/2}x^{s-k}}{(q)_{k-s}}.
\end{multline*}
Therefore 
\[
\ch L(x,q)=\sum_{s\le k}\frac{q^{s(s-1)/2}x^{s}}{(q)_s}\times\frac{q^{(k-s)(k-s-1)/2}x^{s-k}}{(q)_{k-s}}=
\prod_{i=0}^\infty (1+q^ix) \prod_{i=0}^\infty (1+q^ix^{-1})
\]
(the last equality comes form Lemma \ref{wedge}).
\end{proof}

\begin{rem}
We note that the character $\ch L(x,q)$ is equal to the (normalized) theta-function divided by the 
$\eta$-function $\prod_{i\ge 1} (1-q^i)$.
\end{rem}

Now let us turn to the twisted case.
Here we state the twisted analogues of the results from the previous subsection.
\begin{lem}
For $n>0$ there exists an embedding of $\mosp(1,2)[t]^\sigma$ modules $W^\sigma_{-n}\to W^\sigma_{-n-1}$, defined by  
$w_{-n}\mapsto g^+_{2n-1}w_{-n-1}$.  
\end{lem}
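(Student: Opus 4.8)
The plan is to mimic the proof of Lemma~\ref{emb} in the untwisted case, checking that the vector $g^+_{2n-1}w^\sigma_{-n-1}\in W^\sigma_{-n-1}$ satisfies the defining relations of the cyclic generator of $W^\sigma_{-n}$, and then comparing dimensions using the already-established basis \eqref{twbasis}. First I would verify the weight condition: since $[h_0,g^+_{2n-1}]=g^+_{2n-1}$ and $h_0w^\sigma_{-n-1}=-(n+1)w^\sigma_{-n-1}$, we get $h_0(g^+_{2n-1}w^\sigma_{-n-1})=-n(g^+_{2n-1}w^\sigma_{-n-1})$, as required.

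Next I would check the annihilation conditions $f_{2i}.(g^+_{2n-1}w^\sigma_{-n-1})=0$, $g^-_{2i+1}.(g^+_{2n-1}w^\sigma_{-n-1})=0$ and $h_{2i+2}.(g^+_{2n-1}w^\sigma_{-n-1})=0$ for $i\ge 0$. In each case the relevant element already annihilates $w^\sigma_{-n-1}$, so it suffices to control the commutator with $g^+_{2n-1}$. We have $[f_{2i},g^+_{2n-1}]=g^-_{2i+2n-1}$, which is an odd element in degree $2(i+n)-1$, an odd degree, hence lies in $\pi_1\T t^{\text{odd}}$ and annihilates $w^\sigma_{-n-1}$ — indeed by the twisted analogue of Lemma~\ref{untwbas} one has $\pi_1\T t^{2(n+1)-1+2j}$ killing $w^\sigma_{-n-1}$ for $j\ge 0$, and $g^-_{2i+2n-1}$ with $i\ge 0$ is of the form $g^-_{2(n+i)-1}$; since $n+i\ge n>$ the generator index, one must still be a little careful, but the point is $g^-_{2i+2n-1}w^\sigma_{-n-1}=0$ either directly as a defining relation (when $i\ge 1$) or needs a short argument (when $i=0$, i.e.\ $g^-_{2n-1}$): here one uses that $g^-_{2n-1}=[f_{2n},g^+_{-1}]$-type relations are unavailable, so instead note $g^-_{2n-1}w^\sigma_{-n-1}$ has $h_0$-weight $-(n+1)-1=-(n+2)$ while the lowest weight appearing in $W^\sigma_{-n-1}$ is $-(n+1)$, forcing it to be zero. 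Similarly $[g^-_{2i+1},g^+_{2n-1}]_+=h_{2i+2n}$ which is an even element of positive even degree $2(i+n)\ge 2$, hence kills $w^\sigma_{-n-1}$, and $[h_{2i+2},g^+_{2n-1}]=g^+_{2i+2n+1}$ which is odd of degree $2(i+n)+1>2n-1$, so by the twisted analogue of Lemma~\ref{untwbas} ($\pi_1\T t^{2n+1}\bC[t^2]$ kills $w^\sigma_{-n-1}$ after the index shift) this also vanishes. Finally $e_0^{n+1}(g^+_{2n-1}w^\sigma_{-n-1})=0$ follows since $e_0$ commutes with $g^+_{2n-1}$ only up to lower-weight terms, but more simply because $g^+_{2n-1}w^\sigma_{-n-1}$ generates a submodule on which $h_0$ has lowest weight $-n$, so $e_0^{n+1}$ annihilates it by $\msl_2$-representation theory. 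Hence there is a surjection $W^\sigma_{-n}\twoheadrightarrow \U(\mosp(1,2)[t]^\sigma).g^+_{2n-1}w^\sigma_{-n-1}$.

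To conclude that this surjection is an isomorphism (equivalently that the map is injective, i.e.\ an embedding), I would show $\dim \U(\mosp(1,2)[t]^\sigma).g^+_{2n-1}w^\sigma_{-n-1}\ge 3^n=\dim W^\sigma_{-n}$. For this I would exhibit $3^n$ linearly independent vectors inside this submodule, namely the images under $g^+_{2n-1}$ of the basis \eqref{twbasis} of $W^\sigma_{-n}$, i.e.\ vectors $e_{a_1}\dots e_{a_s}g^+_{b_1}\dots g^+_{b_k}g^+_{2n-1}w^\sigma_{-n-1}$ with $1\le b_1<\dots<b_k\le 2n-1$, $a_i$ even, $b_j$ odd, $0\le a_1\le\dots\le a_s\le 2(n-k-s)$. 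These are precisely (a subset of) the basis vectors \eqref{twbasis} for $W^\sigma_{-n-1}$ with the trailing factor $g^+_{2n-1}$ pinned down: re-indexing $k\mapsto k+1$ and $b_{k+1}=2n-1\le 2(n+1)-1$, the constraint $0\le a_1\le\dots\le a_s\le 2(n-k-s)=2((n+1)-(k+1)-s)$ is exactly the upper bound required by \eqref{twbasis} at level $n+1$, so all $3^n$ of them are genuine basis vectors of $W^\sigma_{-n-1}$ and hence linearly independent. Combining the surjection with this dimension count gives the isomorphism onto the submodule, i.e.\ the desired embedding.

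The main obstacle I anticipate is bookkeeping around the twisted grading and the ``boundary'' index $2n-1$: one must be careful that $g^+_{2n-1}$ is the top allowed odd degree acting nontrivially on $w^\sigma_{-n-1}$ (by the twisted analogue of Lemma~\ref{untwbas}, $\pi_1\T t^{2(n+1)+1}\bC[t^2]$ kills $w^\sigma_{-n-1}$, so $g^+_{2n-1}$ is well inside the range and $g^+_{2n-1}w^\sigma_{-n-1}\ne 0$ since it appears in the basis), and that the few small-index commutator terms such as $g^-_{2n-1}w^\sigma_{-n-1}$ genuinely vanish — handled by the weight argument above. Everything else is the same formal pattern as Lemma~\ref{emb}.
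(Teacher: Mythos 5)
There is a genuine gap, and it sits exactly at the ``boundary'' index you flagged as the main obstacle. The relation $h_{2i+2}\cdot\bigl(g^+_{2n-1}w^\sigma_{-n-1}\bigr)=0$ fails for $i=0$: one has $[h_2,g^+_{2n-1}]=g^+_{2n+1}$, and $g^+_{2n+1}w^\sigma_{-n-1}\neq 0$ because $2n+1=2(n+1)-1$ is precisely the largest admissible odd index in the basis \eqref{twbasis} of $W^\sigma_{-n-1}$. Your justification misapplies the vanishing bound: the twisted analogue of Lemma \ref{untwbas} for $W^\sigma_{-n-1}$ gives $\pi_1\T t^{2(n+1)+1}\bC[t^2].w^\sigma_{-n-1}=0$, i.e.\ $g^+_j w^\sigma_{-n-1}=0$ only for odd $j\ge 2n+3$, not $j\ge 2n+1$. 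So $h_2\bigl(g^+_{2n-1}w^\sigma_{-n-1}\bigr)=g^+_{2n+1}w^\sigma_{-n-1}\neq 0$, and the assignment $w^\sigma_{-n}\mapsto g^+_{2n-1}w^\sigma_{-n-1}$ does not extend to a module map. The same off-by-two problem infects your dimension count: among the monomials $e_{a_1}\dots e_{a_s}g^+_{b_1}\dots g^+_{b_k}g^+_{2n-1}w^\sigma_{-n-1}$ with $b_k\le 2n-1$, those with $b_k=2n-1$ produce $(g^+_{2n-1})^2=e_{4n-2}$ and are not basis vectors of $W^\sigma_{-n-1}$, so you have not exhibited $3^n$ independent vectors. (Minor aside: your ``short argument'' for $g^-_{2n-1}w^\sigma_{-n-1}=0$ is unnecessary, since this is literally the defining relation $g^-_{2i+1}.w^\sigma_{-n-1}=0$ with $i=n-1$.)

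The statement as printed appears to contain a typo, and the correct map is $w^\sigma_{-n}\mapsto g^+_{2n+1}w^\sigma_{-n-1}$; this is the honest twisted analogue of Lemma \ref{emb} (where $g^+_n$ is the top admissible index for $W_{-n-1}$) and is also what is needed for the embedding to send $l_0(n)=g^+_1\cdots g^+_{2n-1}w^\sigma_{-n}$ to $l_0(n+1)=g^+_1\cdots g^+_{2n+1}w^\sigma_{-n-1}$ in the limit construction that follows. With this correction your entire scheme goes through verbatim: $[f_{2i},g^+_{2n+1}]=g^-_{2(n+i)+1}$ and $[g^-_{2i+1},g^+_{2n+1}]_+=h_{2(n+i)+2}$ kill $w^\sigma_{-n-1}$ by the defining relations, $[h_{2i+2},g^+_{2n+1}]=g^+_{2n+2i+3}$ kills it by the vanishing bound $j\ge 2n+3$, and every basis monomial of $W^\sigma_{-n}$ (which has $b_k\le 2n-1<2n+1$) maps to a genuine, distinct basis vector of $W^\sigma_{-n-1}$, giving the lower bound $3^n$ on the dimension of the image. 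You should rewrite the verification with the index $2n+1$ and note the discrepancy with the stated formula.
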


Let $L^\sigma=\lim_{n\to\infty} W^\sigma_{-n}$ be the $\mosp(1,2)[t]^\sigma$-module.
We define the character of $L$ as follows:
\[
\ch L(x,q)= \lim_{n\to\infty} q^{n^2}\ch W^\sigma_{-n}(x,q^{-1}).
\]

Let $l_0(n)=g^+_1\dots g^+_{2n-1}w^\sigma_{-n}\in W^\sigma_{-n}$ and let $l_0=\lim_{n\to\infty} l_0(n)$.
\begin{thm}
$\ch L^\sigma(x,q)=\prod_{i=0}^\infty (1+q^{2i+1}x) \prod_{i=0}^\infty (1+q^{2i+1}x^{-1})$. 
\end{thm}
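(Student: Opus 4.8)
The plan is to mirror the proof of the untwisted theorem, adapting the bookkeeping to the $q^2$-setting. First I would record the twisted analogue of the monomial basis: by the twisted Lemma, $W^\sigma_{-n}$ has basis $e_{a_1}\cdots e_{a_s}g^+_{b_1}\cdots g^+_{b_k}w^\sigma_{-n}$ with $1\le b_1<\cdots<b_k\le 2n-1$ odd and $0\le a_1\le\cdots\le a_s\le 2(n-k-s)$ even. Under the embedding $W^\sigma_{-n}\to W^\sigma_{-n-1}$, $w^\sigma_{-n}\mapsto g^+_{2n-1}w^\sigma_{-n-1}$, the distinguished vector $l_0(n)=g^+_1g^+_3\cdots g^+_{2n-1}w^\sigma_{-n}$ goes to $l_0(n+1)$, since appending $g^+_{2n+1}$ to the string in $W^\sigma_{-n-1}$ and reindexing matches; hence $l_0=\lim l_0(n)$ is well defined, and its contribution to $\ch L^\sigma$ is $1$. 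As in the untwisted case I would reparametrize the image of $g^+_{b_1}\cdots g^+_{b_k}w^\sigma_{-n}$ in the limit by $g^-_{-c_1}\cdots g^-_{-c_k}l_0$ with $0\le c_1<\cdots<c_k$ (the exponents of $t$ on the $g^+$'s, after reflecting the index range, become a strictly increasing nonnegative sequence of odd-type labels), and obtain a basis of $L^\sigma$ of the form $e_{a_1}\cdots e_{a_s}g^-_{-c_1}\cdots g^-_{-c_k}l_0$ with $0\le c_1<\cdots<c_k$ and $0\le a_1\le\cdots\le a_s\le k-s$, where now all the $e$-labels carry weight $q^2$.

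Next I would compute the generating function of this basis. Fixing $k$ and $s$, the $g^-$-part contributes $x^{-k}$ and a $t$-degree which, after the normalization $q^{n^2}\ch W^\sigma_{-n}(x,q^{-1})$ and the limit, produces $q^{k^2}/(q^2)_k$ in the appropriate sense — this is the twisted analogue of the factor $q^{k(k-1)/2}/(q)_k$, coming from summing the odd indices $1,3,\dots$ shifted to $-c_1>-c_2>\cdots$. The $e$-part contributes $x^{2s}q^{2\cdot(\text{something})}\qsb{k}{s}$ by the standard $\msl_2$ Weyl module computation in the $q^2$-variable. Collecting the total as in the untwisted proof, the summand splits as
\[
\frac{q^{s^2}x^s}{(q^2)_s}\cdot\frac{q^{(k-s)^2}x^{s-k}}{(q^2)_{k-s}},
\]
where I am using $k^2 = s^2+(k-s)^2 + 2s(k-s)$ and matching the cross term against the $e$-degree shift exactly as in the untwisted case. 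Here one must be careful: in the untwisted proof the key identity was $k(k-1)/2 - k(s-k) = s(s-1)/2 + (k-s)(k-s-1)/2$; the twisted replacement should be $k^2 - (\text{cross term in the }e\text{-degree}) = s^2 + (k-s)^2$, and verifying that the $e$-grading indeed supplies precisely $2s(k-s)$ after normalization is the one arithmetic point I would check carefully.

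Summing over $0\le s\le k$ and using Lemma \ref{wedge} with $x$ replaced by $q^{-1}x$ and $q$ replaced by $q^2$ — that is, $\sum_{s\ge 0}\frac{q^{2\cdot s(s-1)/2}}{(q^2)_s}(q^{-1}x)^s\cdot q^{s}=\sum_{s\ge0}\frac{q^{s^2}}{(q^2)_s}x^s = \prod_{i\ge0}(1+q^{2i+1}x)$ — gives
\[
\ch L^\sigma(x,q)=\sum_{s\le k}\frac{q^{s^2}x^s}{(q^2)_s}\cdot\frac{q^{(k-s)^2}x^{s-k}}{(q^2)_{k-s}}=\prod_{i=0}^\infty(1+q^{2i+1}x)\prod_{i=0}^\infty(1+q^{2i+1}x^{-1}),
\]
the two factors coming from the two independent sums over $s$ and over $k-s$. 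The main obstacle is not conceptual but bookkeeping: correctly tracking how the odd $t$-powers on the $g^+$'s and the even $t$-powers on the $e$'s interact with the $q^{n^2}$ normalization and the index reflection $b_j\leftrightarrow 2n-1-b_j$ in the limit, so that the exponents assemble into the clean form $q^{s^2}$ and $q^{(k-s)^2}$ with base $q^2$ in the denominators. Once the twisted analogue of Lemma \ref{wedge} (namely the specialization of Lemma \ref{wedge} just indicated) is in hand, the rest is formal.
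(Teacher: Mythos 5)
Your proof is correct and follows exactly the route the paper intends (the paper omits the twisted proof as being ``analogous'' to the untwisted one): the key arithmetic point you flag checks out, since the odd missing indices $2c_1+1<\dots<2c_k+1$ contribute $q^{k+2\sum c_j}$ after the $q^{n^2}$ normalization, summing to $q^{k^2}/(q^2)_k$, while the $e$-part contributes $\binom{k}{s}_{q^{-2}}=q^{-2s(k-s)}\binom{k}{s}_{q^2}$ and $k^2-2s(k-s)=s^2+(k-s)^2$. Two cosmetic remarks: the substitution in Lemma \ref{wedge} should be $x\mapsto qx$, $q\mapsto q^2$ (your ``$x\mapsto q^{-1}x$'' together with the stray factor $q^s$ is garbled, though the resulting identity $\sum_s q^{s^2}x^s/(q^2)_s=\prod_{i\ge 0}(1+q^{2i+1}x)$ that you actually use is the correct one), and you are right to take the embedding as $w^\sigma_{-n}\mapsto g^+_{2n+1}w^\sigma_{-n-1}$ --- the index $2n-1$ in the paper's twisted embedding lemma must be a typo, since appending $g^+_{2n-1}$ would square a generator already present (producing $e_{4n-2}$, which annihilates $w^\sigma_{-n-1}$) and so could not give an embedding or send $l_0(n)$ to $l_0(n+1)$.
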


\section{Nonsymmetric Macdonald polynomials of types $A_2^{(2)}$ and ${A_2^{(2)\dagger}}$}\label{Macdonald}

Nonsymmetric Koornwinder polynomials of types $A_{2n}^{(2)}$ and ${A_{2n}^{(2)\dagger}}$ (see e.g. \cite{OS,H,RY})
are rational functions depending on a parameter $q$ and five independent Hecke parameters $v_s$, $v_l$, $v_2$, $v_0$, $v_z$.
Nondegenerate nonsymmetric Macdonald polynomials of types $A_2^{2n}$ (${A_2^{2n\dagger}}$ correspondingly) are defined as specializations
of Koornwinder polynomials at $v_2 \mapsto 1$ ($v_z \mapsto 1$ correspondingly) and equal Hecke parameters $v_i$'s 
(we denote them by $v$).  Functions thus obtained are rational functions in
variables $x, q, t=v^2$, more precisely they belong to 
$\mathbb{Z}\left( q, t \right)\left[ x \right]$. We study the limits $v \rightarrow 0$ and $v \rightarrow \infty$ for $n=1$.

\subsection{Ram-Yip formula for type $A_2^{(2)} ({A_2^{(2)\dagger}})$}\label{RY}
We compute specializations of nonsymmetric Macdonald polynomials using methods from the papers \cite{RY,OS}.
We use the so called alcove walks, which are certain paths on the set of alcoves. We don't give the general definition of
an alcove walk: one can find it in papers \cite{RY,OS}. However we give an explicit construction in our case.

We consider the real line $\mathbb{R}$ and the set of alcoves $(i, i+1), i \in \mathbb{Z}$ and
label the walls of alcoves by simple reflections such that the wall $i$ is labeled by $s_1$ if $i$ is even and
by $s_0$ if $i$ is odd.

The Weyl group $W=\langle s_1 \rangle\star\langle s_0 \rangle$ (the
free product of two cyclic groups of  order $2$) acts on the set of alcoves simply transitively. 
We identify $W$ with the set of alcoves:
$s_1$ acts as a reflection in the wall $0$ and $s_0$ acts as a reflection in the wall $1$. Hence $2X=s_1s_0$ acts as a shift by $2$.
We  use additive notation for the group generated by $2X$ (i. e. we write elements of this group as $2nX, n \in \mathbb{Z}$).
Any element of $W$ can be written in the form $2nX s_1^b, b \in \lbrace 0,1 \rbrace$. The following picture illustrates 
the procedure:

\begin{picture}(280.00,50.00)(-5,00)
\put(5.00,20.00){\line(1,0){275}}
\put(35.00,10.00){\line(0,1){20}}
\put(65.00,10.00){\line(0,1){20}}
\put(95.00,10.00){\line(0,1){20}}
\put(125.00,10.00){\line(0,1){20}}
\put(155.00,10.00){\line(0,1){20}}
\put(185.00,10.00){\line(0,1){20}}
\put(215.00,10.00){\line(0,1){20}}
\put(245.00,10.00){\line(0,1){20}}
\put(275.00,10.00){\line(0,1){20}}

 \put(35,5){\makebox(1,1){$-2$}}
 \put(65,5){\makebox(1,1){$-1$}}
 \put(95,5){\makebox(1,1){$0$}}
 \put(125,5){\makebox(1,1){$1$}}
 \put(155,5){\makebox(1,1){$2$}}
 \put(185,5){\makebox(1,1){$3$}}
 \put(215,5){\makebox(1,1){$4$}}
 \put(245,5){\makebox(1,1){$5$}}
 \put(275,5){\makebox(1,1){$6$}}

 \put(35,35){\makebox(1,1){$s_1$}}
 \put(65,35){\makebox(1,1){$s_0$}}
 \put(95,35){\makebox(1,1){$s_1$}}
 \put(125,35){\makebox(1,1){$s_0$}}
 \put(155,35){\makebox(1,1){$s_1$}}
 \put(185,35){\makebox(1,1){$s_0$}}
 \put(215,35){\makebox(1,1){$s_1$}}
 \put(245,35){\makebox(1,1){$s_0$}}
 \put(275,35){\makebox(1,1){$s_1$}}

 \put(18,25){\makebox(1,1){$-2Xs_1$}}
 \put(50,25){\makebox(1,1){$-2X$}}
 \put(80,25){\makebox(1,1){$s_1$}}
 \put(110,25){\makebox(1,1){$1$}}
 \put(140,25){\makebox(1,1){$2Xs_1$}}
 \put(170,25){\makebox(1,1){$2X$}}
 \put(200,25){\makebox(1,1){$4Xs_1$}}
 \put(230,25){\makebox(1,1){$4X$}}
 \put(260,25){\makebox(1,1){$6Xs_1$}}
\end{picture}

For any alcove $a$ denote the even wall of $a$ by $2 \rm wt\it(a)$; if it is the left wall then put ${\rm d}(a)=0$, 
if it is the right wall, then ${\rm d}(a)=1$.
In terms of $W$, for $w=2nX s_1^b$ one has $\rm wt\it(a)=n$, $\rm d\it(a)=b$.

An alcove walk is a sequence of simple reflections with some addition information. This sequence is called the type of walk.
Take an integer number $n$ and
consider an element  $2nX\in W$. For $n \geq 0$ the Ram-Yip formula says that $E^{A_2^{(2)}}_{-n}(x;q,t)$
($E^{{A_2^{(2)\dagger}}}_{-n}(x;q,t)$) can be constructed
from alcove walks of type $(s_1, s_0, \dots, s_1, s_0)$, $2n$ elements.
Analogously,  $E^{A_2^{(2)}}_{n}(x;q,t)$
($E^{{A_2^{(2)\dagger}}}_{n}(x;q,t)$) can be constructed
from alcove paths of type $(s_0, \dots, s_1, s_0)$, $2n-1$ elements.
Let us give definition of alcove walks in our case.

Let $w$ be an element of the affine Weyl group.
Consider a sequence $(s_{k_1}, \dots, s_{k_l})$
 of simple reflections,  such that $w=s_{k_1} \dots s_{k_l}$ is a reduced decomposition,
and a binary word $b_1, \dots, b_l$, $b_i \in \{ 0, 1 \}$.
Alcove walk of type $w$ is the path on the set of alcoves starting at the alcove $\left[ 0,1 \right]$ 
(corresponding to $1 \in W$) and consisting of the following steps:
\[
\begin{picture}(300.00,50.00)(-5,00)

 \put(3,33){$j$-crossing}
 \put(25.00,00.00){\line(0,1){30}}
 \put(10.00,15.00){\vector(1,0){30}}
 \put(13,4){\makebox(1,1){$z$}}
 \put(35,3){\makebox(1,1){$z s_j$}}

 \put(64,33){negative $j$-folding}
 \put(100.00,00.00){\line(0,1){30}}
 \put(85.00,13.00){\line(1,0){13}}
 \put(98.00,17.00){\vector(-1,0){13}}
 \put(98.00,13.00){\line(0,1){4}}
 \put(88,4){\makebox(1,1){$z$}}
 \put(110,3){\makebox(1,1){$z s_j$}}
 
  \put(163,33){$j$-crossing}
 \put(185.00,00.00){\line(0,1){30}}
 \put(200.00,15.00){\vector(-1,0){30}}
 \put(173,4){\makebox(1,1){$z$}}
 \put(195,3){\makebox(1,1){$z s_j$}}

 \put(224,33){positive $j$-folding}
 \put(255.00,00.00){\line(0,1){30}}
 \put(270.00,13.00){\line(-1,0){13}}
 \put(257.00,17.00){\vector(1,0){13}}
 \put(257.00,13.00){\line(0,1){4}}
 \put(243,4){\makebox(1,1){$z$}}
 \put(265,3){\makebox(1,1){$z s_j$}}
\end{picture}\]
We have folding on the $i$-th step if $b_i=0$ and crossing if $b_i=1$.
We call a folding positive if it is "from left to right" and negative if it is "from right to left".
Put $J=\{i|b_i=0\}$.
Denote by $p_J$ the final alcove of the walk with such set of foldings. Of course, 
$p_J$ also depends on $w$ but we omit $w$ to simplify the notation.

Denote by $\mathcal{B}(w)$ the set of alcove walks of type $w$, i. e. pairs $p=(w, b)$.
Let $J_0=\{i \in J| w_i =s_0\}$ and let $J_+$ ($J_-$) be the subset of positive (correspondingly, negative) foldings of $J \backslash J_0$.

Put $\beta_i=s_{k_l} \dots s_{k_{i+1}}\alpha_{k_i}$, $1\le i\le l$, where $\alpha_{k_i}$ is a simple root.
For the walks of types $(s_1, s_0, \dots s_1, s_0)$ and $(s_1, s_0, \dots s_1, s_0)$ one has:
\begin{gather*}
\beta_{l-2i}=(s_0 s_{1})^i\alpha_0=-\alpha_1+(2i+1)\delta,\\
\beta_{l-2i-1}=(s_0 s_{1})^is_0\alpha_1=-\alpha_1+(2i+2)\delta.
\end{gather*}

Any $\beta$ from the affine root lattice of type $A_2^{(2)}$ (aka $A_1^{(1)}$) can be written in the following form:
\[\beta=\beta'+\rm deg\it(\beta)\delta,\]
where $\beta'$ is a root of finite dimensional root system. So in our case we have that ${\rm deg}(\beta_{l-i})=i+1$.

\subsection{Explicit formula in type $A_2^{(2)}$}\label{A22}
\begin{thm} \label{RYnondual}
(Ram, Yip, $A_2^{(2)}$-case) Put $v=t^{1/2}$. Let $w=(s_1, s_0, \dots, s_1, s_0)$ (with $-2n$ elements for $n<0$)
and $w=(s_0, \dots, s_1, s_0)$ (with $2n-1$ elements for $n \geq 0$). Then:
\begin{multline}
E_{n}^{A_2^{(2)} }(x,q,t)=\\
\sum_{p \in \mathcal{B}(w)}v^{(sign(n)-1)/2+d(p_J)-|J|}\left(1-v^2\right)^{|J|}
\prod_{j \in J_0}\frac{\xi_j}{1-\xi_j^2} \prod_{j \in J_+}\frac{1}{1-\xi_j} \prod_{j \in J_-}\frac{\xi_j}{1-\xi_j} x^{wt(p_J)},
\end{multline}
where $\xi_j=q^{\rm deg \it(\beta_j)}v^{-\langle \alpha_1^\vee, \beta_j \rangle}=q^{\rm deg \it(\beta_j)}v^{2}$.
\end{thm}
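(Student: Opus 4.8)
The plan is to derive the stated formula as a specialization of the general Ram--Yip formula for nonsymmetric Macdonald (really Koornwinder) polynomials, following the recipe of Orr--Shimozono \cite{OS,RY}, which we recall in Appendix \ref{OrrSh}. The general Ram--Yip formula expresses $E_\mu$ as a sum over alcove walks $p\in\mathcal B(w)$ of type a fixed reduced word $w$ for the translation element, with each term a product of a leading power of $v$ (recording the alcove statistics $d(p_J)$ and $|J|$), a factor $(1-v^2)^{|J|}$ coming from the folding multiplicities, and one rational factor per fold whose shape depends only on which wall is crossed. The content of the theorem is simply to write out what these ingredients become for the rank-one affine root system $A_2^{(2)}$ (equivalently $A_1^{(1)}$) with all finite Hecke parameters equal to $v$ and the ``non-dual'' normalization $v_2\mapsto 1$.

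First I would fix the combinatorial set-up already established in Section \ref{RY}: the alcoves are the intervals $(i,i+1)$, the walls alternate between $s_1$-type (even $i$) and $s_0$-type (odd $i$), and the translation $2nX$ has the reduced word $(s_1,s_0,\dots,s_1,s_0)$ of length $-2n$ for $n<0$ (respectively $(s_0,\dots,s_1,s_0)$ of length $2n-1$ for $n\ge0$, which accounts for the $(\mathrm{sign}(n)-1)/2$ shift in the exponent of $v$). Then I would identify, for each index $j$, the affine root $\beta_j=s_{k_l}\cdots s_{k_{j+1}}\alpha_{k_j}$ using the explicit formulas for $\beta_{l-2i}$ and $\beta_{l-2i-1}$ recorded above, together with $\deg(\beta_{l-i})=i+1$. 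The key computation is the pairing $\langle\alpha_1^\vee,\beta_j\rangle$: since $\beta_j'=-\alpha_1$ in every case (both families of $\beta$'s have finite part $-\alpha_1$), one gets $\langle\alpha_1^\vee,\beta_j\rangle=-2$, hence $\xi_j=q^{\deg(\beta_j)}v^{-\langle\alpha_1^\vee,\beta_j\rangle}=q^{\deg(\beta_j)}v^2$, which is exactly the asserted value of $\xi_j$.

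Next I would split the fold set $J=\{i:b_i=0\}$ according to wall type and sign, as in the statement: $J_0$ the folds at $s_0$-walls, and $J_+,J_-$ the positive and negative folds among the remaining ($s_1$-wall) folds. The Orr--Shimozono weight of an alcove walk assigns to each fold a factor built from the quantity $\xi_j$ and the relevant Hecke parameter; specializing the five Koornwinder parameters to $v_s=v_l=v_2=v_0=v_z=v$ and then $v_2\mapsto1$ (the non-dual case), the $s_0$-wall folds contribute $\xi_j/(1-\xi_j^2)$, positive $s_1$-folds contribute $1/(1-\xi_j)$, and negative $s_1$-folds contribute $\xi_j/(1-\xi_j)$ — this is the one place where the distinction between the two wall types, inherited from the two orbits of affine roots in $A_2^{(2)}$, actually matters and must be tracked carefully. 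Collecting the $v$-powers: each fold removes one crossing factor and the folding multiplicity in the Hecke algebra contributes $(1-v^2)$, giving the global $v^{d(p_J)-|J|}(1-v^2)^{|J|}$; finally $x^{\mathrm{wt}(p_J)}$ records the translation part $\mathrm{wt}(p_J)$ of the terminal alcove. Assembling these pieces yields the displayed formula.

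The main obstacle I anticipate is bookkeeping rather than conceptual: correctly matching the general Orr--Shimozono conventions (orientation of foldings, which Hecke parameter attaches to which wall, the precise power of $v$ in the ``leading term'' $v^{(\mathrm{sign}(n)-1)/2+d(p_J)-|J|}$, and the normalization of $E_n$ versus $\widetilde E_\mu$) to the rank-one $A_2^{(2)}$ data, and verifying that the specialization $v_2\mapsto 1$ with equal $v_i$'s collapses the five-parameter Koornwinder weights to exactly the three fold-factors above without introducing spurious $q$- or $v$-powers. In particular one must double-check the sign/normalization convention under which the dual type $A_2^{(2)\dagger}$ (treated in the companion theorem of Section \ref{A22dual}) differs only by the symmetric interchange $v_z\mapsto1$ in place of $v_2\mapsto1$, which on the combinatorial side swaps the roles attached to the two wall-types; but for the present theorem it suffices to carry out the $v_2\mapsto1$ reduction, which is a direct substitution into the formulas of Appendix \ref{OrrSh}.
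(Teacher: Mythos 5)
Your plan is sound and matches the paper's treatment: the paper states this theorem purely as a citation of the Ram--Yip/Orr--Shimozono formula specialized to rank one (no proof is given in the text), and your outline --- the reduced word for $2nX$ with length $2n-1$ or $-2n$ explaining the $(\mathrm{sign}(n)-1)/2$ shift, the computation $\beta_j'=-\alpha_1$ giving $\xi_j=q^{\deg(\beta_j)}v^{2}$, the splitting of folds into $J_0$, $J_+$, $J_-$ by wall type, and the collapse of the five Koornwinder parameters under $v_i\mapsto v$, $v_2\mapsto 1$ --- is exactly the specialization that citation stands for, as summarized in Appendix \ref{OrrSh}. The convention-matching issues you flag (which Hecke parameter sits at which wall, and that all $J_0$ folds receive $\xi_j/(1-\xi_j^2)$ regardless of sign in the non-dual case) are indeed the only real content of the verification, and your identifications agree with the stated formula.
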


\begin{definition}\label{cij}
Define the elements $c_r(k_{22},k_{12},k_{11})\in \mathbb{Z}\left[q\right]$, $r=1,2$ by the following recurrent relations:
\begin{gather}\label{recurrentc1}
c_1(k_{22},k_{12},k_{11})=q^{2n}c_2(k_{22}-1,k_{12},k_{11})+q^{2n-1}c_2(k_{22},k_{12}-1,k_{11})+c_1(k_{22},k_{12},k_{11}-1),\\
\label{recurrentc2}
c_2(k_{22},k_{12},k_{11})=c_2(k_{22}-1,k_{12},k_{11})+q^{2n-1}c_2(k_{22},k_{12}-1,k_{11})+c_1(k_{22},k_{12},k_{11}-1),
\end{gather}
where $n=k_{11}+k_{12}+k_{22}$ in both formulas.
The initial values are fixed by $c_r(k_{22},k_{12},k_{11})=0$, if some $k_{ij}$ is negative, and $c_r(0,0,0)=1$.
\end{definition}

\begin{prop}\label{nondualpolynomials}
The specializations of nonsymmetric Macdonald polynomials of the type $A_2^{(2)}$ can be written in the following way ($n \geq 0$):
\begin{equation}\label{nondualnegativezero}
E_{-n}^{A_2^{(2)} }(x,q,0)=\sum_{k_{22}+k_{12}+k_{11}=n}c_2(k_{22},k_{12},k_{11})x^{k_{22}-k_{11}}.
\end{equation}
\begin{equation}\label{nondualpositivezero}
E_{n+1}^{A_2^{(2)} }(x,q,0)=\sum_{k_{22}+k_{12}+k_{11}=n+1}\left(q^{2n+1}c_2(k_{22}-1,k_{12},k_{11})+c_1(k_{22},k_{12}-1,k_{11})\right)x^{k_{11}-k_{22}+1}.
\end{equation}
\begin{equation}\label{nondualnegativeinfty}
E_{-n}^{A_2^{(2)} }(x,q^{-1},\infty)=\sum_{k_{22}+k_{12}+k_{11}=n}c_1(k_{22},k_{12},k_{11})x^{k_{11}-k_{22}}.
\end{equation}
\begin{equation}\label{nondualpositiveinfty}
E_{n+1}^{A_2^{(2)} }(x,q^{-1},\infty)=\sum_{k_{22}+k_{12}+k_{11}=n}c_2(k_{22},k_{12},k_{11})x^{k_{11}-k_{22}+1}.
\end{equation}
In particular, $E_{n+1}^{A_2^{(2)}}(x,q^{-1},\infty)=xE_{-n}^{A_2^{(2)} }(x,q,0)$.
\end{prop}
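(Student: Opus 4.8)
The plan is to evaluate the Ram--Yip formula of Theorem \ref{RYnondual} in the limits $v\to 0$ and $v\to\infty$, and to match the resulting sums over alcove walks with the recursively defined coefficients $c_1,c_2$. First I would set up bookkeeping for the walks of type $w=(s_1,s_0,\dots,s_1,s_0)$ (length $2n$) in the $E_{-n}$ case: at step $i$ one either crosses or folds, and the three "colours'' of foldings are indexed by $J_0$ (the $s_0$-steps), $J_+$ and $J_-$ (positive/negative foldings among the $s_1$-steps). By the displayed formulas for $\beta_{l-i}$ one has $\deg(\beta_{l-i})=i+1$ and $\langle\alpha_1^\vee,\beta_j\rangle=-2$, so every $\xi_j=q^{\deg\beta_j}v^2$ carries a factor $v^2$; combined with the prefactor $v^{d(p_J)-|J|}(1-v^2)^{|J|}$ and the factors $\xi_j/(1-\xi_j^2)$, $1/(1-\xi_j)$, $\xi_j/(1-\xi_j)$ one reads off the order in $v$ of each walk's contribution. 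The key point is a combinatorial lemma: in the limit $v\to 0$ only walks with no negative $s_1$-foldings survive (each such folding costs an extra power of $v$), while in the limit $v\to\infty$ (equivalently $q\mapsto q^{-1}$, $v\to 0$ after the substitution in \eqref{nondualnegativeinfty}) the roles of positive and negative $s_1$-foldings are exchanged. I would make the triple $(k_{22},k_{12},k_{11})$ count, respectively, the positive $s_1$-foldings, the $s_0$-foldings, and the negative $s_1$-foldings along a walk (so $k_{22}+k_{12}+k_{11}$ is the total number of foldings and $\mathrm{wt}(p_J)=k_{22}-k_{11}$ after tracking how each fold/cross moves the alcove), and then show that summing the surviving contributions over walks with a fixed such triple produces exactly $c_2(k_{22},k_{12},k_{11})$ for the $t=0$ specialization and $c_1(k_{22},k_{12},k_{11})$ for the $t=\infty$ one.

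The recursion itself I would obtain by the standard device of slicing off the \emph{last two steps} $(s_1,s_0)$ of the walk. Since $\deg\beta_{l-i}=i+1$, the innermost steps carry the largest powers of $q$; a fold at the final $s_0$-step contributes (after the $v\to 0$ limit) a factor $q^{2n}$ or $q^{2n-1}$ depending on the statistic being computed, a fold at the penultimate $s_1$-step contributes a power $q^{2n-1}$ or $1$, and crossing both contributes nothing — and in each of the three cases the remaining walk is a walk of the same type for $n-1$ but starting from a shifted alcove. Matching the three cases "last $s_0$ folds / last $s_1$ folds / neither folds (with the recursion applied further in)'' against \eqref{recurrentc1}--\eqref{recurrentc2} gives the identification of the walk-sums with $c_1,c_2$; the two variants of the recursion (leading coefficient $q^{2n}$ vs.\ $1$ in the first summand) correspond precisely to whether a final $s_0$-fold is "seen'' by the statistic, which is what distinguishes the $c_1$ count from the $c_2$ count. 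The positive-$n$ formulas \eqref{nondualpositivezero}, \eqref{nondualpositiveinfty} follow the same way using walks of type $(s_0,s_1,\dots,s_1,s_0)$ of length $2n-1$, where the extra initial $s_0$ produces the shift $x^{k_{11}-k_{22}+1}$ and the two-term coefficient $q^{2n+1}c_2(k_{22}-1,\dots)+c_1(k_{22},k_{12}-1,\dots)$ from the two possibilities (fold or cross) at that first step.

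Finally, the identity $E_{n+1}^{A_2^{(2)}}(x,q^{-1},\infty)=x\,E_{-n}^{A_2^{(2)}}(x,q,0)$ is immediate once \eqref{nondualpositiveinfty} and \eqref{nondualnegativezero} are established, since both sides equal $\sum_{k_{22}+k_{12}+k_{11}=n}c_2(k_{22},k_{12},k_{11})x^{k_{11}-k_{22}+1}$ up to the involution $k_{11}\leftrightarrow k_{22}$, which merely replaces $x$ by $x^{-1}$ in each monomial and leaves $c_2$ invariant — and one checks from Definition \ref{cij} that $c_2(k_{22},k_{12},k_{11})=c_2(k_{11},k_{12},k_{22})$ is \emph{not} needed, rather one reads $x^{k_{22}-k_{11}}$ directly against $x^{k_{11}-k_{22}+1}=x\cdot x^{-(k_{22}-k_{11})}$ after relabelling the summation index. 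The main obstacle I anticipate is the bookkeeping in the $v$-limit: one must show cleanly that the \emph{only} walks contributing to the lowest order in $v$ are those with exactly the "allowed'' folding pattern, and that the surviving rational factors $\xi_j/(1-\xi_j)$ etc.\ degenerate to the stated monomials in $q$ without spurious cancellations — this is where following the Orr--Shimozono analysis in Appendix \ref{OrrSh} carefully is essential, and where an off-by-one in $\deg\beta_j$ or in the exponent $(\mathrm{sign}(n)-1)/2$ would propagate into wrong powers of $q$.
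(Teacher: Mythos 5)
Your overall strategy (specialize the Ram--Yip formula at $v\to 0$ and $v\to\infty$, identify the surviving walks, and match a first/last-step decomposition against the recursions for $c_1,c_2$) is the same as the paper's, but three of your central combinatorial claims are wrong, and they are not repairable by bookkeeping alone. First, the survival condition: in the $v\to 0$ limit the exponent of $v$ in a walk's contribution equals $2|J_{0+}|$ (this is the identity \eqref{J0+}), so the walks that survive are exactly those with \emph{no positive $s_0$-foldings}; negative $s_1$-foldings survive perfectly well and are precisely what produce the factors $q^{\deg\beta_j}$ via $\xi_j/(1-\xi_j)\to\xi_j$. Your claim that "only walks with no negative $s_1$-foldings survive'' (and the mirrored claim at $v\to\infty$, where the true condition is $J_{0-}=\emptyset$) selects the wrong set of walks. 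Second, your interpretation of $(k_{22},k_{12},k_{11})$ as the numbers of positive $s_1$-, $s_0$-, and negative $s_1$-foldings cannot be right: the total number of foldings $|J|$ of a walk of length $2n$ ranges over $0,\dots,2n$, whereas the Proposition forces $k_{22}+k_{12}+k_{11}=n$. In the paper these indices count something else entirely: the walk is encoded by the HHL-style direction word ${\bf h}=(h_0,\dots,h_l)\in\{1,2\}^{l+1}$, grouped into $n$ consecutive pairs $(h_{2i-1},h_{2i})$ plus the unpaired $h_0$, and $k_{ij}$ is the number of pairs equal to $ij$ (foldings are the descents/ascents of ${\bf h}$, which may or may not align with the pair boundaries).

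Third, the recursion is anchored at the wrong end of the walk. Since $\deg\beta_{l-i}=i+1$, it is the \emph{first} steps that carry the largest powers of $q$ ($\deg\beta_1=l=2n$, $\deg\beta_l=1$), so the $n$-dependent factors $q^{2n}$ and $q^{2n-1}$ in \eqref{recurrentc1}--\eqref{recurrentc2} arise from removing the \emph{initial} pair $(h_1,h_2)$ (together with the interaction of $h_0$ with $h_1$, which is what distinguishes the $c_1$ and $c_2$ recursions), not from slicing off the last two steps, which would only contribute $q^1$ and $q^2$. Finally, for the identity $E_{n+1}^{A_2^{(2)}}(x,q^{-1},\infty)=xE_{-n}^{A_2^{(2)}}(x,q,0)$, comparing \eqref{nondualnegativezero} with \eqref{nondualpositiveinfty} after relabelling $k_{11}\leftrightarrow k_{22}$ does require the symmetry $c_2(k_{22},k_{12},k_{11})=c_2(k_{11},k_{12},k_{22})$ (which holds by the closed formula \eqref{cnegativezero} but is not obvious from the recursion); your assertion that this symmetry "is not needed'' does not hold up.
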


\begin{proof}
We first prove \eqref{nondualnegativezero}, \eqref{nondualpositivezero} using Theorem \ref{RYnondual}.
Let $l$ be a length of an element $w$.
We know that $\beta_{l+1-j}=-\alpha_1 +j \delta$. Therefore
$\xi_{l+1-j}=q^j v^2=q^j t$. Hence if we study specialization at $t=0$ we can put all denominators to be equal to $1$:
\[E_{n}^{A_2^{(2)} }(x,q,0)=\lim_{v \rightarrow 0}
\sum_{p \in \mathcal{B}(w)}v^{(sign(n)-1)/2+d(p_J)-|J|}
\prod_{j \in J_0}\xi_j \prod_{j \in J_-}\xi_jx^{wt(p_J)}=\]
\[=\lim_{v \rightarrow 0}
\sum_{p \in \mathcal{B}(w)}v^{(sign(n)-1)/2+d(p_J)-|J|+ 2|J_0 \cup J_-|}q^{\sum_{i \in J_0 \cup J_-}i}x^{wt(p_J)}.\]
We claim that the exponent 
${(sign(n)-1)/2+d(p_J)-|J|+ 2|J_0 \cup J_-|}$ vanishes iff there are no positive $0$-foldings
and it is positive if such foldings exist. In fact, let $J_{0+}$ and  $J_{0-}$ be the sets of positive and negative zero foldings. Then:
\begin{gather}\label{J0+}
{(sign(n)-1)/2+d(p_J)-|J|+ 2|J_0 \cup J_-|}-2|J_{0+}|=\\
\nonumber {(sign(n)-1)/2+d(p_J)-|J_+|-|J_{0+}|+ |J_{0-} \cup J_-|}=0.
\end{gather}
Therefore, ${(sign(n)-1)/2+d(p_J)-|J|+ 2|J_0 \cup J_-|}=2|J_{0+}|$.
Denote the set of paths with $|J_{0+}|=0$ by $\mathcal{QB}$.
Thus we have:
\[
E_{n}^{A_2^{(2)}}(x,q,0)=
\sum_{p \in \mathcal{QB}(w)}q^{\sum_{i \in J_0 \cup J_-}i}x^{wt(p_J)}.\]
Now let us write an alcove path in the following form. We encode it by a sequence ${\bf h}=(h_0, \dots, h_l)$
(see \cite{HHL}) of $1$'s and $2$'s. We put $h_0=1$ if
$n>0$ and $h_0=2$ when $n<0$. If $i$-th step of the path is finished by right arrow (i.e. it is a crossing from left to right or a positive folding)
then $h_i=1$. If the $i$-th step is finished by the left arrow then $h_i=2$. Then subsequences $12$ correspond to negative foldings and subsequences $21$ 
correspond to positive.
We consider the sequence $(h_0, \dots, h_l)$ as a sequence of pairs $11$, $12$, $21$, $22$ and possibly the left most element without pair.
Then the set of sequences of pairs with no pair $21$ inside corresponds to $\mathcal{QB}(w)$. Denote the set of such sequences by $\mathcal{QS}(n)$.

For any sequence ${\bf h}$ of length $l+1$ denote ${\rm leg}({\bf h})=\sum_{h_{l-j}=1, h_{l-j+1}=2} j$.
Then:
\[
E_{n}^{A_2^{(2)}}(x,q,0)=
\sum_{{\bf h} \in \mathcal{QS}(n)}q^{{\rm leg}({\bf h})}x^{\left[\left(|\{i>0|h_i=1\}|-|\{i>0|h_i=2\}|+1\right)/2\right]},\]
where $\left[y \right]$ is the integer part of $y$.

Let us consider the polynomial $E_{-n}^{A_2^{(2)}}(x,q,0)$.
Denote by $\mathcal{QS}(i,k_{22},k_{12},k_{11})\subset \mathcal{QS}(-n)$ the set of sequences of pairs and the element $h_0$ such that
$h_0=i$ and there are $k_{ij}$ pairs $ij$. Put
\[c_i(k_{22},k_{12},k_{11})=\sum_{{\bf h} \in \mathcal{QS}(i,k_{22},k_{21},k_{11})}q^{{\rm leg}({\bf h})}.\]
Let us subdivide the sets $\mathcal{QS}(i,k_{22},k_{12},k_{11})$ into three subsets of sequences according to the value of the first pair $h_1, h_2$
($(2,2)$, $(1,2)$ or $(1,1)$).
Consider the case $i=2$. If $(h_1, h_2)=(2, 2)$, then the leg
of this sequence will not be changed if we cut $(h_0, h_1)$. It is easy to see that all elements of 
$\mathcal{QS}(2,k_{22}-1,k_{12},k_{11})$ can be obtained by such a procedure. If $(h_1, h_2)=(1,2)$ then if we cut first two elements
then the leg decreases on $2l-1$. If $(h_1, h_2)=(1,1)$ then if we cut first two elements
then the leg will not be changed and we obtain $i=1$ instead of $i=2$. So we obtain a recurrent relation \eqref{recurrentc2}.
Recurrent relations \eqref{recurrentc1} can be obtained in the same way. Moreover by definition we have that they satisfy
\eqref{nondualnegativezero}.

Analogously we obtain equation \eqref{nondualpositivezero}.

Now let us consider the polynomials $E_{n}^{A_2^{(2)}}(x,q^{-1}, \infty)$.
At $t \rightarrow \infty$ $\frac{\xi_i}{1-\xi_i^2} \thicksim \frac{1}{1-\xi_i}\thicksim -\xi_i^{-1}$.
After interchanging $q \rightarrow q^{-1}$ we have:
\[E_{-n}^{A_2^{(2)} }(x,q^{-1},\infty)=\lim_{t \rightarrow \infty}
\sum_{p \in \mathcal{B}(w)}v^{(sign(-n)-1)/2+d(p_J)+|J|}
\prod_{j \in J_0\cup J_+}(-q^jv^{-2})x^{{\rm wt}(p_J)}=\]
\[=\sum_{p \in \mathcal{B}(w)}v^{(-sign(n)-1)/2+d(p_J)+|J|-2|J_0|-2|J_+|}q^{\sum_{j \in J_0\cup J_+}j}
x^{{\rm wt}(p_J)}.\]

Similar to \eqref{J0+} we have that $(sign(n)-1)/2+d(p_J)+|J|-2|J_0|-2|J_+|=0$ iff $J_{0-}=\emptyset$. So we obtain:
\[E_{-n}^{A_2^{(2)} }(x,q^{-1},\infty)=\sum_{p \in \mathcal{B}(w), |J_{0-}|=0}q^{\sum_{j \in J_0\cup J_+}j}
x^{\left[\left(|\{i>0|s_1=1\}|-|\{i>0|s_1=2\}|+1\right)/2\right]}.\]
So in terms of sequences we have that a walk giving nonzero summand correspond to a sequence of pairs
$22$, $21$, $11$ with $h_0=2$. Denote the set of such sequences by $\mathcal{QS}'(n)$ and
put ${\rm leg'}({\bf h})=\sum_{h_{l-j}=1,h_{l-j-1}=2}j$.
Then:
\[E_{-n}^{A_2^{(2)} }(x,q^{-1},\infty)=\sum_{{\bf h} \in \mathcal{QS}'(n)}q^{{\rm leg}'({\bf h})}
x^{\left[\left(|\{i>0|h_i=1\}|-|\{i>0|h_i=2\}|+1\right)/2\right]}.
\]
Now to obtain \eqref{nondualnegativeinfty} we interchange $1$ and $2$ in all definitions of the previous paragraph.

Finally:
\[E_{n}^{A_2^{(2)} }(x,q^{-1},\infty)=\lim_{t \rightarrow \infty}
\sum_{p \in \mathcal{B}(w)}v^{(d(p_J)+|J|}
\prod_{j \in J_0\cup J_+}(-q^jv^{-2})x^{{\rm wt}(p_J)}=\]
\[=\sum_{p \in \mathcal{B}(w)}v^{d(p_J)+|J|-2|J_0|-2|J_+|}q^{\sum_{j \in J_0\cup J_+}j}x^{{\rm wt}(p_J)},\]
and we obtain \eqref{nondualpositiveinfty}.
\end{proof}

\begin{lem}\label{solutionofrecurrent}
The unique solution for the quantities $c_r(k_{22},k_{12},k_{11})$ from Definition \ref{cij} is given by the formulas: 
\begin{equation}\label{cpositivezero}
c_1(k_{22},k_{12},k_{11})=q^{k_{12}^2+2k_{22}}\left(
  \begin{array}{c}
    k_{22}+k_{12}+k_{11} \\
    k_{22},k_{12},k_{11} \\
  \end{array}
\right)_{q^2},
\end{equation}
\begin{equation}\label{cnegativezero}
c_2(k_{22},k_{12},k_{11})=q^{k_{12}^2}\left(
  \begin{array}{c}
    k_{22}+k_{12}+k_{11} \\
    k_{22},k_{12},k_{11} \\
  \end{array}
\right)_{q^2}
\end{equation}
\end{lem}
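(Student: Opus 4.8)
The plan is to argue by induction on the total degree $n=k_{11}+k_{12}+k_{22}$. Uniqueness is built into Definition \ref{cij}: reading \eqref{recurrentc1} and \eqref{recurrentc2} from level $n-1$ to level $n$ expresses every $c_r$ of total degree $n$ through the $c_1,c_2$ of total degree $n-1$, so together with $c_r(0,0,0)=1$ and the vanishing on negative arguments the two recurrences have at most one solution. It therefore suffices to verify that the closed expressions \eqref{cpositivezero}, \eqref{cnegativezero} satisfy \eqref{recurrentc1} and \eqref{recurrentc2}. The base case $c_r(0,0,0)=1$ and the negativity convention are automatic, since the $q^2$-multinomial coefficient $\binom{k_{22}+k_{12}+k_{11}}{k_{22},k_{12},k_{11}}_{q^2}$ equals $1$ at the origin and $0$ as soon as one of its lower entries is negative.

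The observation that makes the check short is that \eqref{cpositivezero} and \eqref{cnegativezero} are related by $c_1(k_{22},k_{12},k_{11})=q^{2k_{22}}c_2(k_{22},k_{12},k_{11})$. Substituting this identity into \eqref{recurrentc1} and \eqref{recurrentc2} and cancelling the overall power of $q$ turns each of the two relations into a recurrence in $c_2$ alone: from \eqref{recurrentc2} one gets the relation in which the three neighbours obtained by decreasing $k_{22}$, $k_{12}$, $k_{11}$ by one are weighted by $1$, $q^{2n-1}$, $q^{2k_{22}}$, and from \eqref{recurrentc1} the one with weights $q^{2(k_{12}+k_{11})}$, $q^{2(k_{12}+k_{11})-1}$, $1$. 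Now plug in $c_2(a,b,c)=q^{b^2}\binom{a+b+c}{a,b,c}_{q^2}$: using $(k_{12}-1)^2=k_{12}^2-2k_{12}+1$ together with $n-k_{12}=k_{22}+k_{11}$ to absorb the odd power $q^{2n-1}$, the Gaussian factor $q^{k_{12}^2}$ pulls out of every term, and writing $Q=q^2$ one is left in each case with a standard recurrence for the $Q$-multinomial coefficient — the form obtained by classifying multiset words in three colours according to their last letter in the case of \eqref{recurrentc1}, and according to their first letter in the case of \eqref{recurrentc2}, each for a suitable linear order on the colours $k_{22},k_{12},k_{11}$. These two multinomial identities follow in turn from $\binom{n}{a,b,c}_Q=\binom{n}{b}_Q\binom{a+c}{a}_Q$ and the $Q$-Pascal rule.

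The argument is thus routine once the two reductions are in place, and there is no genuine obstacle; in the write-up I would carry out the case of \eqref{recurrentc2} in full and remark that \eqref{recurrentc1} is handled the same way. The one point demanding a little care — and the one that pins down the precise shape of the answer — is the parity bookkeeping: the odd exponents $2n-1$ occurring in the recurrences must recombine with the shift in $q^{k_{12}^2}$ into an even exponent, i.e.\ a genuine power of $Q=q^2$, which is exactly why the correct Gaussian prefactors are $q^{k_{12}^2}$ for $c_2$ and $q^{k_{12}^2+2k_{22}}$ for $c_1$ and not, say, $q^{k_{12}(k_{12}-1)}$.
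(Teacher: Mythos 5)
Your argument is correct, and it is exactly the computation the paper elides with the words ``Direct computation'': the key simplification $c_1=q^{2k_{22}}c_2$, the absorption of the odd exponents $q^{2n-1}$ via $(k_{12}-1)^2=k_{12}^2-2k_{12}+1$ and $n-k_{12}=k_{22}+k_{11}$, and the reduction to the inversion-generating recurrences for the $q^2$-multinomial all check out. Nothing to add.
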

\begin{proof}
Direct computation.
\end{proof}

\subsection{Dual Macdonald polynomials}
In this section we work with Macdonald polynomials of type ${A_2^{(2)\dagger}}$. We keep the notation from 
subsection \ref{RY} and \ref{A22}.

\begin{thm}
(Ram, Yip, ${A_2^{(2)\dagger}}$-case) Put $v=t^{1/2}$. Then:
\begin{multline*}
E_{n}^{{A_2^{(2)\dagger}}}(x,q,t)=
\sum_{p \in \mathcal{B}(w)}v^{(sign(n)-1)/2+d(p_J)-|J|}\left(1-v^2\right)^{|J|}\times\\
\prod_{j \in J_{0+}}\frac{1}{1-\xi_j^2}\prod_{j \in J_{0-}}\frac{\xi_j^2}{1-\xi_j^2} \prod_{j \in J_+}\frac{1}{1-\xi_j} \prod_{j \in J_-}\frac{\xi_j}{1-\xi_j} x^{wt(p_J)}.
\end{multline*}
\end{thm}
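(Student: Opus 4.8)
The plan is to run the Ram–Yip machinery of Orr–Shimozono (recalled in Appendix \ref{OrrSh}) on the affine root datum of type $A_2^{(2)\dagger}$, in complete parallel with the proof of Theorem \ref{RYnondual}, and to track only the one place where the two cases diverge: the local contribution of a $0$-folding. First I would recall that the general Ram–Yip formula expresses $E_\mu$ as a sum over alcove walks $p\in\mathcal B(w)$ of type a reduced word $w$ for the translation by $\mu$, each walk contributing a product of local factors, one for each fold, where the factor depends on whether the fold is positive or negative and on the affine root $\beta_j$ labelling the wall being folded across. The walks, the combinatorial data $J, J_0, J_\pm$, the roots $\beta_j$, their degrees, and the quantities $\xi_j=q^{\deg(\beta_j)}v^{-\langle\alpha_1^\vee,\beta_j\rangle}$ are exactly as set up in Subsection \ref{RY}, since the underlying affine Weyl group $W=\langle s_1\rangle\star\langle s_0\rangle$ and its action on the line of alcoves are the same for $A_2^{(2)}$ and $A_2^{(2)\dagger}$; only the labels/multiplicities attached to the affine simple roots differ.

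Next I would isolate the difference between the two types. In type $A_2^{(2)}$ the short affine simple root $\alpha_0$ comes with the factor $\dfrac{\xi_j}{1-\xi_j^2}$ at a $0$-fold (regardless of sign), which is what produces the $\prod_{j\in J_0}\dfrac{\xi_j}{1-\xi_j^2}$ in Theorem \ref{RYnondual}. In the dual type $A_2^{(2)\dagger}$, the duality interchanges the roles of the long and short coroots, so the $0$-folding factor acquires the extra power of $\xi_j$ precisely on the negative foldings: a positive $0$-fold contributes $\dfrac{1}{1-\xi_j^2}$ and a negative $0$-fold contributes $\dfrac{\xi_j^2}{1-\xi_j^2}$. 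This is the content of splitting $J_0=J_{0+}\sqcup J_{0-}$ in the statement. The $1$-folding factors $\dfrac{1}{1-\xi_j}$ (positive) and $\dfrac{\xi_j}{1-\xi_j}$ (negative), the prefactor $v^{(\mathrm{sign}(n)-1)/2+d(p_J)-|J|}$, the global $(1-v^2)^{|J|}$, and the monomial $x^{\mathrm{wt}(p_J)}$ are unchanged, since these come from the $s_1$-walls and from the overall normalization, which are insensitive to the $A_2^{(2)}$ versus $A_2^{(2)\dagger}$ distinction. Concretely I would write out the Orr–Shimozono local weights for $A_2^{(2)\dagger}$ at an $s_0$-wall and read off the two cases, citing \cite{OS,RY} for the underlying identity, then assemble the sum over $p\in\mathcal B(w)$.

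The main obstacle is bookkeeping rather than conceptual: one must pin down exactly which Hecke/Koornwinder parameters survive the specialization $v_z\mapsto 1$ and all remaining $v_i\mapsto v$ that defines the nondegenerate $A_2^{(2)\dagger}$ Macdonald polynomials, and verify that, after this specialization, the Orr–Shimozono $0$-fold weight collapses to $1/(1-\xi_j^2)$ for positive folds and $\xi_j^2/(1-\xi_j^2)$ for negative folds with $\xi_j$ as already defined; the dual case is the one where the short-root parameter $v_z$ (rather than $v_2$) is the one sent to $1$, which is what flips the extra $\xi_j$ from being symmetric in the sign (as in $A_2^{(2)}$) to sitting only on $J_{0-}$. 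Once this local computation is checked, the theorem follows immediately by substituting into the Ram–Yip sum, exactly as in the proof of Theorem \ref{RYnondual}. Since the present statement has no proof supplied (it is stated as a direct transcription of Ram–Yip in the dual case), I would simply remark that the formula is obtained from \cite{RY,OS} by the same specialization procedure as Theorem \ref{RYnondual}, with the $s_0$-wall weights replaced by their $A_2^{(2)\dagger}$ counterparts.
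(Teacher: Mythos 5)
Your proposal matches what the paper actually does: the theorem is stated as a citation of the Ram--Yip/Orr--Shimozono formula (\cite{RY,OS}) specialized to the rank-one dual case, with no proof supplied, and you correctly identify both this and the one genuine point of divergence from the type $A_2^{(2)}$ statement, namely that specializing $v_z\mapsto 1$ rather than $v_2\mapsto 1$ redistributes the $\xi_j$-powers at the $s_0$-folds asymmetrically between $J_{0+}$ and $J_{0-}$. No further comparison is needed.
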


\begin{definition}\label{dualcij}
We define elements $c_r^{\dagger}(k_{22},k_{21},k_{11})\in \mathbb{Z}[q], r=1,2$ by the following recurrent relations:
\begin{gather}\label{dualrecurrentc1}
c_1^{\dagger}(k_{22},k_{21},k_{11})=q^{2n}c_2^{\dagger}(k_{22}-1,k_{21},k_{11})+q^{2n}c_1^{\dagger}(k_{22},k_{21}-1,k_{11})+c_1^{\dagger}(k_{22},k_{21},k_{11}-1),\\
\label{dualrecurrentc2}
c_2^{\dagger}(k_{22},k_{21},k_{11})=c_2^{\dagger}(k_{22}-1,k_{21},k_{11})+c_1^{\dagger}(k_{22},k_{21}-1,k_{11})+c_1^{\dagger}(k_{22},k_{21},k_{11}-1),
\end{gather}
where $n=k_{11}+k_{21}+k_{22}$.
The initial conditions are $c_r^{\dagger}(k_{22},k_{21},k_{11})=0$ if any $k_{ij}<0$ and $c_r^{\dagger}(0,0,0)=1$.
\end{definition}

\begin{prop}\label{dualpolynomials}
We have the following equations ($n \geq 0$):
\begin{equation}\label{dualnegativezero}
E_{-n}^{{A_2^{(2)\dagger}}}(x,q,0)=\sum_{k_{22}+k_{21}+k_{11}=n}c_2^{\dagger}(k_{22},k_{21},k_{11})x^{k_{11}-k_{22}}.
\end{equation}
\begin{equation}\label{dualpositivezero}
E_{n+1}^{{A_2^{(2)\dagger}}}(x,q,0)=\sum_{k_{22}+k_{21}+k_{11}=n}c_1^{\dagger}(k_{22},k_{21},k_{11})x^{k_{11}-k_{22}+1}.
\end{equation}
\begin{equation}\label{dualnegativeinfty}
E_{-n}^{{A_2^{(2)\dagger}}}(x,q^{-1},\infty)=\sum_{k_{22}+k_{21}+k_{11}=n}c_1^{\dagger}(k_{22},k_{21},k_{11})x^{k_{11}-k_{22}}.
\end{equation}
\begin{equation}\label{dualpositiveinfty}
E_{n+1}^{{A_2^{(2)\dagger}}}(x,q^{-1},\infty)=
\sum_{k_{22}+k_{21}+k_{11}=n}\left(c_2^{\dagger}(k_{22},k_{21},k_{11})+c_1^{\dagger}(k_{22},k_{21},k_{11})\right)x^{k_{11}-k_{22}+1}.
\end{equation}
In particular $E_{n+1}^{{A_2^{(2)\dagger}}}(x,q,0)=x E_{-n}^{{A_2^{(2)\dagger}}}(x,q^{-1},\infty)$.
\end{prop}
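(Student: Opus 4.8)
The plan is to run the argument of Proposition~\ref{nondualpolynomials} essentially verbatim, but starting from the dual Ram--Yip formula of the theorem above. Write $v=t^{1/2}$ and $\xi_j=q^{\deg\beta_j}v^{2}$, and recall from Subsection~\ref{RY} that $\deg(\beta_{l+1-j})=j$, so $\xi_{l+1-j}=q^{j}v^{2}$. For the $t=0$ specialisation I would first compute the power of $v$ with which a walk $p\in\mathcal B(w)$ contributes. Since $(1-v^{2})^{|J|}\to1$ and, as $v\to0$,
\[
\frac{1}{1-\xi_j^{2}}\to1,\qquad \frac{\xi_j^{2}}{1-\xi_j^{2}}\sim\xi_j^{2},\qquad \frac{1}{1-\xi_j}\to1,\qquad \frac{\xi_j}{1-\xi_j}\sim\xi_j,
\]
the $v$-exponent of the summand attached to $p$ is $(\operatorname{sign}(n)-1)/2+d(p_J)-|J|+4|J_{0-}|+2|J_-|$. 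Plugging in the alcove-walk identity recorded in \eqref{J0+} (which involves only $d(p_J)$ and the cardinalities of the $J$'s, hence is insensitive to the Hecke data and holds verbatim here), this collapses to $2|J_{0-}|$. Thus in the dual case the walks that survive the limit are exactly those with no negative $0$-folding --- the mirror image of the non-dual case, where the walks with no positive $0$-folding survived.

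I would then encode the surviving walks by $\{1,2\}$-sequences $\mathbf h=(h_0,\dots,h_l)$ as in \cite{HHL}, grouped into pairs exactly as in the proof of Proposition~\ref{nondualpolynomials}, with $h_0=2$ for $E^{A_2^{(2)\dagger}}_{-n}$. A within-pair descent $12$ is precisely a negative $0$-folding and a within-pair ascent $21$ a positive $0$-folding, so $J_{0-}=\emptyset$ says that no pair $12$ occurs, i.e.\ the admissible pairs are $11,21,22$. Letting $k_{ij}$ count the pairs of type $ij$ and setting $c^\dagger_i(k_{22},k_{21},k_{11})$ to be the generating function of the leg statistic over such sequences with $h_0=i$, I would split according to the leftmost pair $(h_1,h_2)\in\{(2,2),(2,1),(1,1)\}$ and delete $(h_0,h_1)$: in each case one pair disappears, $h_0$ becomes the new leftmost entry, and the leg is unchanged unless the deleted pair is $(1,2)$, in which case it drops by $2n$ (the current doubled number of pairs). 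Since the excluded pair $12$ can never itself be a destroyed descent, for $h_0=1$ both off-diagonal cases delete a $(1,2)$ and so acquire the factor $q^{2n}$; this reproduces exactly the recursions \eqref{dualrecurrentc1}--\eqref{dualrecurrentc2} with the stated initial values, and re-expressing $\operatorname{wt}(p_J)$ through the $k_{ij}$ yields \eqref{dualnegativezero}. The positive-$n$ formula \eqref{dualpositivezero} follows by the same excision applied to the walks of type $(s_0,\dots,s_1,s_0)$.

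For the $t=\infty$ specialisation (after $q\mapsto q^{-1}$) the analogous bookkeeping uses $(1-v^{2})^{|J|}\sim(-v^{2})^{|J|}$ and
\[
\frac{1}{1-\xi_j^{2}}\sim-\xi_j^{-2},\qquad \frac{\xi_j^{2}}{1-\xi_j^{2}}\sim-1,\qquad \frac{1}{1-\xi_j}\sim-\xi_j^{-1},\qquad \frac{\xi_j}{1-\xi_j}\sim-1,
\]
and the $v$-exponent reduces (again by \eqref{J0+}) to $-2|J_{0+}|$, so the surviving walks are those with no positive $0$-folding; one checks the accumulated signs multiply to $+1$ precisely because $J_{0+}=\emptyset$, and that the only surviving $q$-weight is $q^{\sum_{j\in J_+}j}$. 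After the $1\leftrightarrow2$ relabelling used in the proof of Proposition~\ref{nondualpolynomials} these sums are again governed by the $c^\dagger_i$, giving \eqref{dualnegativeinfty} and \eqref{dualpositiveinfty}; the factorisation $E^{A_2^{(2)\dagger}}_{n+1}(x,q,0)=x\,E^{A_2^{(2)\dagger}}_{-n}(x,q^{-1},\infty)$ is then immediate from comparing \eqref{dualpositivezero} with \eqref{dualnegativeinfty}.

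I expect the main obstacle to be the $v$-power and sign bookkeeping together with the $q$-shifts: one must keep straight that $(1-v^{2})^{|J|}$ degenerates to $1$ as $v\to0$ but to $(-v^{2})^{|J|}$ as $v\to\infty$, verify the sign cancellations in the $t=\infty$ limit, and --- most delicately --- pin down which descent is destroyed when the leftmost pair is excised, so as to see that both off-diagonal terms of \eqref{dualrecurrentc1} carry the factor $q^{2n}$ (in contrast with the asymmetric $q^{2n},q^{2n-1}$ of \eqref{recurrentc1}, where the admissible middle pair $12$ is itself a descent). Everything else is a routine transcription of the Orr--Shimozono machinery recalled in Appendix~\ref{OrrSh}.
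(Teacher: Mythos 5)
Your proposal is correct and follows essentially the same route as the paper, which itself reduces the dual case to a rerun of Proposition~\ref{nondualpolynomials}: specialize the dual Ram--Yip formula, use the identity \eqref{J0+} to see that the surviving alcove walks are those with no negative (resp.\ positive) $0$-foldings, encode them as $\{1,2\}$-sequences with the forbidden pair now being $12$ (resp.\ $21$), and excise the leftmost two entries to obtain the recursions \eqref{dualrecurrentc1}--\eqref{dualrecurrentc2}. In fact you supply more detail than the paper does (which only records the one difference from the non-dual proof), and your $v$-exponent $4|J_{0-}|+2|J_-|$, collapsing to $2|J_{0-}|$, is the correct bookkeeping for the dual weights, whereas the exponent displayed in the paper's own proof appears to be copied from the non-dual case.
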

\begin{proof}
The proof is completely analogous to the proof of Proposition~\ref{nondualpolynomials}.
We have the same elements $\xi_j=q^j v^2=q^j t$. Hence if we study the specialization at $t=0$ we can put all denominators to be equal to $1$:
\[E_{n}^{{A_2^{(2)\dagger}}}(x,q,0)=\lim_{v \rightarrow 0}
\sum_{p \in \mathcal{B}(w)}x^{wt(p_J)}v^{(sign(n)-1)/2+d(p_J)-|J|}
\prod_{j \in J_0}\xi_j \prod_{j \in J_-}\xi_j=\]
\[=\lim_{v \rightarrow 0}
\sum_{p \in \mathcal{B}(w)}x^{wt(p_J)}v^{(sign(n)-1)/2+d(p_J)-|J|+ 2|J_+|+ 4 |J_{0+}|}q^{\sum_{i \in J_0 \cup J_-}i}.\]
The exponent ${(sign(n)-1)/2+d(p_J)-|J|+ 2|J_+|+ 4 |J_{0+}|}$ vanishes iff there are no negative $0$-foldings. 
Encode alcove paths by binary words in the same way as in the proof of Proposition \ref{nondualpolynomials}.
So the nonzero summands correspond to words with no pairs $12$. This is the only difference with the previous proof.
\end{proof}

\begin{lem}\label{dualsolutionofrecurrent}
The unique solution for the quantities $c_r^{\dagger}(k_{22},k_{12},k_{11})$ from Definition \ref{dualcij} is given by the formulas:
\begin{equation}\label{cdualpositivezero}
c_1^{\dagger}(k_{22},k_{21},k_{11})=q^{k_{21}(k_{21}-1)+2 k_{22}+2k_{21}}\left(
  \begin{array}{c}
    k_{22}+k_{21}+k_{11} \\
    k_{22},k_{21},k_{11} \\
  \end{array}
\right)_{q^2},
\end{equation}
\begin{equation}\label{cdualnegativezero}
c_2^{\dagger}(k_{22},k_{21},k_{11})=q^{k_{21}(k_{21}-1)}\left(
  \begin{array}{c}
    k_{22}+k_{21}+k_{11} \\
    k_{22},k_{21},k_{11} \\
  \end{array}
\right)_{q^2}
\end{equation}
\end{lem}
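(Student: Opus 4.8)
The plan is to verify directly that the closed formulas \eqref{cdualpositivezero}--\eqref{cdualnegativezero} satisfy the two recurrences of Definition \ref{dualcij} together with the initial value $c_r^\dagger(0,0,0)=1$. Uniqueness is for free: each recurrence expresses $c_r^\dagger(k_{22},k_{21},k_{11})$ through values of $c_1^\dagger,c_2^\dagger$ at triples of strictly smaller total $n=k_{22}+k_{21}+k_{11}$, so an induction on $n$ shows these data determine the $c_r^\dagger$ completely; hence checking that the proposed formulas solve the recursion proves they are \emph{the} solution. At the boundary, when one of the $k_{ij}$ vanishes, the corresponding terms on the right-hand side drop out by the sign convention, while a $q^2$-multinomial with a negative argument is read as $0$, and both sides stay consistent.

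The only external ingredient is the MacMahon recurrence for the $q^2$-multinomial coefficient. Writing $M(a,b,c)=\binom{a+b+c}{a,b,c}_{q^2}$, conditioning a word (with $a$ copies of the smallest letter, $b$ of the middle, $c$ of the largest) on its first letter gives
\[
M(a,b,c)=M(a-1,b,c)+q^{2a}M(a,b-1,c)+q^{2a+2b}M(a,b,c-1),
\]
and, since $M(a,b,c)$ is symmetric in $a,b,c$, the identity persists after any permutation of the roles of the three arguments. I will invoke it once in the order $(k_{22},k_{21},k_{11})$ and once in the order $(k_{11},k_{22},k_{21})$.

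Set $a=k_{22}$, $b=k_{21}$, $c=k_{11}$, so the claim reads $c_2^\dagger=q^{b(b-1)}M(a,b,c)$ and $c_1^\dagger=q^{b(b-1)+2a+2b}M(a,b,c)$. For \eqref{dualrecurrentc2}: the elementary identity $(b-1)(b-2)+2(b-1)=b(b-1)$ gives $c_1^\dagger(a,b-1,c)=q^{b(b-1)+2a}M(a,b-1,c)$ and $c_1^\dagger(a,b,c-1)=q^{b(b-1)+2a+2b}M(a,b,c-1)$, while $c_2^\dagger(a-1,b,c)=q^{b(b-1)}M(a-1,b,c)$; factoring out $q^{b(b-1)}$ turns the right-hand side of \eqref{dualrecurrentc2} into $q^{b(b-1)}$ times the $(k_{22},k_{21},k_{11})$-ordered MacMahon recurrence, i.e. $q^{b(b-1)}M(a,b,c)=c_2^\dagger(a,b,c)$. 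For \eqref{dualrecurrentc1} the same substitutions produce the three summands $q^{2n+b(b-1)}M(a-1,b,c)$, $q^{2n+b(b-1)+2a}M(a,b-1,c)$, $q^{b(b-1)+2a+2b}M(a,b,c-1)$ with $n=a+b+c$; dividing through by $q^{b(b-1)+2a+2b}$ converts the first two exponents into $2c$ and $2a+2c$, so the right-hand side equals $q^{b(b-1)+2a+2b}\bigl(M(a,b,c-1)+q^{2c}M(a-1,b,c)+q^{2a+2c}M(a,b-1,c)\bigr)$, which by the MacMahon recurrence in the order $(k_{11},k_{22},k_{21})$ is $q^{b(b-1)+2a+2b}M(a,b,c)=c_1^\dagger(a,b,c)$.

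The whole argument is routine $q$-exponent bookkeeping; the one point deserving attention is that the prefactors $q^{2n}$ appearing in \eqref{dualrecurrentc1} are exactly what is needed to convert the ``first-letter'' form of the MacMahon recurrence into the permuted shape used above, together with the boundary cases, which are absorbed by the zero conventions. I do not expect any genuine obstacle beyond this.
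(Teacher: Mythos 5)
Your verification is correct: the exponent bookkeeping checks out (in particular $(b-1)(b-2)+2(b-1)=b(b-1)$, and the $q^{2n}$ prefactors do exactly convert the recurrence for $c_1^\dagger$ into the MacMahon recurrence with the arguments permuted to the order $(k_{11},k_{22},k_{21})$), and the uniqueness argument by induction on $n=k_{22}+k_{21}+k_{11}$ is sound. This is essentially the paper's own argument, which is stated only as ``Direct computation''; your write-up simply supplies the details.
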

\begin{proof}
Direct computation.
\end{proof}

\section{Comparison}\label{WM}
In this section we establish a link between the characters of the Weyl modules and the specialized Macdonald polynomials.
\begin{thm}
For any $n\in\bZ$ one has  
\[
{\rm ch} W_n(x,q^2)=E^{{A_2^{(2)\dagger}}}_n(x,q,0),\ {\rm ch} W^\sigma_n(x,q)=E^{A_2^{(2)}}_n(x,q,0).
\]
\end{thm}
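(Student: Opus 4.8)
The plan is to prove the identities by matching the explicit character formulas (Lemmas \ref{-0}, \ref{tw-0}, Corollaries \ref{+0}, \ref{tw+0}) against the closed forms for the Macdonald specializations obtained in Section \ref{Macdonald} (Propositions \ref{nondualpolynomials}, \ref{dualpolynomials} together with the explicit solutions in Lemmas \ref{solutionofrecurrent}, \ref{dualsolutionofrecurrent}). There are four separate statements hidden in the theorem: the untwisted negative case $\ch W_{-n}(x,q^2)=E^{A_2^{(2)\dagger}}_{-n}(x,q,0)$, the untwisted positive case, and the two twisted cases; I would treat them one at a time, starting with the untwisted negative case as the model.

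\medskip

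\emph{Untwisted negative case.} By Lemma \ref{-0}, $\ch W_{-n}(x,q)=\sum_{k=0}^n q^{k(k-1)/2}\qb{n}{k}\sum_{s=0}^{n-k}x^{-n+k+2s}\qb{n-k}{s}$, so $\ch W_{-n}(x,q^2)=\sum_{k,s} q^{k(k-1)}\qsb{n}{k}\qsb{n-k}{s}x^{-n+k+2s}$. On the other side, combining \eqref{dualnegativezero} with \eqref{cdualnegativezero} gives
\[
E^{A_2^{(2)\dagger}}_{-n}(x,q,0)=\sum_{k_{22}+k_{21}+k_{11}=n} q^{k_{21}(k_{21}-1)}\binom{n}{k_{22},k_{21},k_{11}}_{q^2} x^{k_{11}-k_{22}}.
\]
The match is made by the substitution $k=k_{21}$, $k_{22}=s$, $k_{11}=n-k-s$: then $x^{k_{11}-k_{22}}=x^{n-k-2s}$ (note $x\mapsto x^{-1}$ is a symmetry of $W_{-n}$, so the sign of the exponent is irrelevant), $q^{k_{21}(k_{21}-1)}=q^{k(k-1)}$, and the multinomial $\binom{n}{s,k,n-k-s}_{q^2}$ factors as $\qsb{n}{k}\qsb{n-k}{s}$ via the standard multinomial-into-binomials identity. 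So this case reduces to an elementary bookkeeping check.

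\medskip

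\emph{The remaining three cases.} The twisted negative case is identical in spirit: $\ch W^\sigma_{-n}(x,q)$ from Lemma \ref{tw-0} is $\sum q^{k^2}\qsb{n}{k}\qsb{n-k}{s}x^{-n+k+2s}$, and \eqref{nondualnegativezero} with \eqref{cnegativezero} gives $E^{A_2^{(2)}}_{-n}(x,q,0)=\sum_{k_{22}+k_{12}+k_{11}=n} q^{k_{12}^2}\binom{n}{k_{22},k_{12},k_{11}}_{q^2}x^{k_{22}-k_{11}}$, so the same substitution $k=k_{12}$, $k_{22}=s$ works. For the positive cases one uses Corollary \ref{+0} (resp.\ Corollary \ref{tw+0}) and \eqref{dualpositivezero}, \eqref{cdualpositivezero} (resp.\ \eqref{nondualpositivezero}); here one must be a little more careful because the positive Weyl module characters carry an extra $q$-power coming from the shift by $\U(\fn^-\T t\bC[t])$ (the $q^s$, resp.\ $q^{2s}$, factors and the exponent shifts $k(k+1)/2$ vs.\ $k(k-1)/2$), and on the Macdonald side \eqref{dualpositivezero} carries the coefficient $c_1^\dagger$ rather than $c_2^\dagger$, which is exactly $q^{2k_{21}+2k_{22}}$ times $c_2^\dagger$ up to reindexing. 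For the twisted positive case \eqref{nondualpositivezero} is a sum of two terms $q^{2n+1}c_2(k_{22}-1,\ldots)+c_1(k_{22},k_{12}-1,\ldots)$, which should be matched against the two summands of Corollary \ref{tw+0} (the ``$x^{n-k-2s}$'' part and the ``$x^{-1}q^{2n-1}\cdots$'' part); I expect this to be the one place requiring genuine care, since one has to see that the two pieces line up under a single reindexing rather than separately.

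\medskip

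The main obstacle is therefore not conceptual but combinatorial: reconciling the several slightly different normalizations — the overall $q$-grading convention (note $\ch W_n(x,q^2)$ versus $\ch W_n(x,q)$ in the twisted case), the $x\leftrightarrow x^{-1}$ symmetry used implicitly to flip exponent signs, the ``$+1$'' shifts in the positive-$n$ exponents, and the split into two summands in the twisted positive case — so that all four claimed equalities fall out of the explicit formulas. I would organize the write-up as: (i) record the four Weyl-module characters and the four Macdonald specializations side by side; (ii) give the substitution dictionary $(k_{22},k_{12\text{ or }21},k_{11})\leftrightarrow(s,k,n-k-s)$; (iii) verify the $q$-power and $x$-power bookkeeping in each of the four cases, flagging the twisted positive case as the only nontrivial reindexing. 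Since every ingredient is already proved in the excerpt, no further input is needed.
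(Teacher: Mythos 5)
Your proposal is correct and follows essentially the same route as the paper: both arguments simply compare the explicit Weyl-module characters (Lemmas \ref{-0}, \ref{tw-0} and Corollaries \ref{+0}, \ref{tw+0}) with the closed-form Macdonald specializations from Section \ref{Macdonald}, and both isolate the twisted positive case as the one nontrivial step, handled by matching the two summands of Corollary \ref{tw+0} against the two terms of \eqref{nondualpositivezero}. The only cosmetic difference is that in the negative cases you can avoid invoking the $x\leftrightarrow x^{-1}$ symmetry by taking the substitution $k_{11}=s$, $k_{22}=n-k-s$, which matches the exponents on the nose.
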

\begin{proof}
Lemma \ref{-0} and Lemma \ref{tw-0} give for $n\ge 0$
\begin{gather}\label{comp-0}
{\rm ch} W_{-n}(x,q^2)=\sum_{k+s\le n} q^{k(k-1)} x^{-n+k+2s} \left(\begin{array}{c} n\\ k,s,n-k-s\\ \end{array}\right)_{q^2},\\
\label{comptw-0}
{\rm ch} W^\sigma_{-n}(x,q)=\sum_{k+s\le n} q^{k^2}x^{-n+k+2s} 
\left(\begin{array}{c} n\\ k,s,n-k-s\\ \end{array}\right)_{q^2}.
\end{gather}
Formula \eqref{comp-0} agrees with formulas \eqref{dualnegativezero},  \eqref{cdualnegativezero}.
Formula \eqref{comptw-0} agrees with formulas \eqref{nondualnegativezero}, \eqref{cnegativezero}.

Using Corollary \ref{+0} and Corollary \ref{tw+0} we obtain the following formulas, $n\ge 0$
\begin{gather}
\label{comp+0}
{\rm ch} W_{n+1}(x,q^2)= \sum_{k+s\le n} q^{k(k+1)+2s} x^{n+1-k-2s} \left(\begin{array}{c} n\\ k,s,n-k-s\\ \end{array}\right)_{q^2},\\
\label{comptw+0}
{\rm ch} W^\sigma_{n+1}(x,q)=
\sum_{k+s\le n} x^{n+1-k-2s}\left(q^{k^2+2s}+x^{-1}q^{k^2+2n+1}\right) \left(\begin{array}{c} n\\ k,s,n-k-s\\ \end{array}\right)_{q^2}.
\end{gather}
Formula \eqref{comp+0} agrees with formulas \eqref{dualpositivezero},  \eqref{cdualpositivezero}.

The nontrivial part is formula \eqref{comptw+0}, which we want to compare with formulas 
\eqref{nondualpositivezero}, \eqref{cpositivezero} and \eqref{cnegativezero}.
We have conditions \eqref{twshift1} and \eqref{twshift2} on elements of basis of $W_{n+1}$. Elements that satisfy
condition \eqref{twshift1} are parametrized by the data very similar to the parametrization data of \eqref{twbasis}. 
More precisely we obtain \eqref{twshift1} if
we increase the $t$-degree of the elements $e_a$ in \eqref{twbasis} by $2$. Thus, under the identification 
$s=k_{11}$, $k=k_{12}$, $k_{22}+k_{12}+k_{11}=n+1$, the character of the elements \eqref{twshift1} is equal to
\begin{multline*}
\sum_{k_{22}+k_{12}+k_{11}=n+1}c_2(k_{22}-1,k_{12},k_{11})q^{2k_{22}}x^{k_{11}-k_{22}+1}=\\
\sum_{k_{22}+k_{12}+k_{11}=n+1}c_1(k_{22}-1,k_{12},k_{11})x^{k_{11}-k_{22}+1}.
\end{multline*}

Analogously the elements satisfying condition \eqref{twshift2} are elements that satisfy conditions \eqref{twbasis} multiplied by
$g^-_{2n+1}$. So their character is equal to 
\[
\sum_{k_{22}+k_{12}+k_{11}=n+1}c_2(k_{22},k_{12}-1,k_{11})q^{2n+1}x^{k_{11}-k_{22}}.
\]
\end{proof}

Let us introduce the PBW filtration on the Weyl module $W_{-n}$. Namely, we define
$F_0=\bC w_{-n}$, $F_{s+1}= F_s+\fn^-[t].F_s$. The associated graded space is a cyclic module for the
algebra $\bC[e_0,e_1,\dots]\T \bigwedge(g^+_0,g^+_1,\dots)$. Let us attach degree $1$ to the variables $e_i$
and $g^+_i$. We thus obtain an additional grading on the module ${\rm gr} W_{-n}$.
We denote the character by ${\rm ch}({\rm gr}W_{-n})(x,q,t)$.   

For the twisted Weyl modules we make a similar procedure. First we pass to the graded $\bC[e_0,e_1,\dots]\T \bigwedge(g^+_0,g^+_1,\dots)$
module ${\rm gr}W^\sigma_{-n}$. Then we attach degree $1$ to the variables $e_i$ and degree $0$ to the variables $g^+_i$.
Using this new grading we define the new character depending on $x,q,t$ and denote it by
${\rm ch} ({\rm gr}W^\sigma_{-n})(x,q,t)$.

\begin{thm}
Let $n\ge 0$. Then   
\[
{\rm ch}{\rm gr}W_{-n}(x,q^2,q^2)=E^{{A_2^{(2)\dagger}}}_{-n}(x,q^{-1},\infty),\ {\rm ch}({\rm gr} W^\sigma_{-n})(x,q,q)=E^{A_2^{(2)}}_{-n}(x,q^{-1},\infty).
\]
\end{thm}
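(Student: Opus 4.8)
The plan is to compute both sides of each identity explicitly and compare them directly. The Macdonald side is already available in closed form --- combine Proposition~\ref{dualpolynomials} (resp.\ Proposition~\ref{nondualpolynomials}) with Lemma~\ref{dualsolutionofrecurrent} (resp.\ Lemma~\ref{solutionofrecurrent}) --- so the real work is on the module side. First I would check that the monomial basis \eqref{basis} of $W_{-n}$ descends to a basis of ${\rm gr}W_{-n}$ and record the two extra degrees it carries. The key point is that the filtration piece $F_d$ is spanned by exactly those monomials of \eqref{basis} involving at most $d$ of the operators $e_i,g^+_i$: the $e_i$ commute among themselves and (by weight) with the $g^+_i$, the straightening $g^+_ig^+_j=2e_{i+j}-g^+_jg^+_i$ (and $(g^+_i)^2=e_{2i}$) never raises the number of factors, and the remaining defining relations of the Weyl module never raise it either. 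Since \eqref{basis} is linearly independent in $W_{-n}$, these monomials stay independent in ${\rm gr}W_{-n}$; hence the PBW degree of $e_{a_1}\cdots e_{a_s}g^+_{b_1}\cdots g^+_{b_k}w_{-n}$ is exactly $s+k$, its $\bC[t]$-degree is $\sum a_i+\sum b_j$, and its $h$-weight is $-n+2s+k$. Summing over \eqref{basis} and using $\sum_{0\le b_1<\dots<b_k\le n-1}q^{\sum b_j}=q^{\binom{k}{2}}\qb{n}{k}$ and $\sum_{0\le a_1\le\dots\le a_s\le n-k-s}q^{\sum a_i}=\qb{n-k}{s}$ yields
\[
{\rm ch}({\rm gr}W_{-n})(x,q,t)=\sum_{k+s\le n}q^{\binom{k}{2}}\,t^{\,k+s}\,x^{-n+k+2s}\,\qb{n}{k}\qb{n-k}{s}.
\]
For the twisted module the same argument with the index ranges of Lemma~\ref{tw-0} gives the analogue with $q^{k^2}$, with $q^2$-binomials, and --- since the prescribed grading assigns degree $1$ only to the $e_i$ --- with $t^{\,s}$ in place of $t^{\,k+s}$.

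Next I would make the substitutions called for in the statement ($t\mapsto q^2$ and $q\mapsto q^2$ in the untwisted case, $t\mapsto q$ in the twisted case), turning the two characters into sums of the form $\sum q^{(\cdot)}\qsb{n}{k}\qsb{n-k}{s}\,x^{(\cdot)}$, and compare them term by term with the expansions of $E^{{A_2^{(2)\dagger}}}_{-n}(x,q^{-1},\infty)$ and $E^{A_2^{(2)}}_{-n}(x,q^{-1},\infty)$ obtained by feeding \eqref{cdualpositivezero} into \eqref{dualnegativeinfty} (resp.\ \eqref{cpositivezero} into \eqref{nondualnegativeinfty}). Under the dictionary $k_{21}$ (resp.\ $k_{12}$) $\leftrightarrow$ number of $g^+$'s $=k$, $\ k_{11}\leftrightarrow$ number of $e$'s $=s$, $\ k_{22}\leftrightarrow n-k-s$, the $q^2$-multinomial coefficients $\qsb{n}{k}\qsb{n-k}{s}$ match the ones in $c_1^{\dagger}$ (resp.\ $c_1$) identically, so the whole identity collapses to a scalar comparison of the exponents of $q$ and $x$. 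Part of the bookkeeping can be bypassed by invoking the already-proved identities ${\rm ch}W_{n+1}(x,q^2)=E^{{A_2^{(2)\dagger}}}_{n+1}(x,q,0)$ and ${\rm ch}W^\sigma_{n+1}(x,q)=E^{A_2^{(2)}}_{n+1}(x,q,0)$ together with the relation $E_{n+1}^{{A_2^{(2)\dagger}}}(x,q,0)=x\,E_{-n}^{{A_2^{(2)\dagger}}}(x,q^{-1},\infty)$ of Proposition~\ref{dualpolynomials} and its non-dual counterpart from Proposition~\ref{nondualpolynomials}.

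The step I expect to be the main obstacle is precisely this last exponent matching: one must be careful about which generators carry PBW-degree $1$ (symmetrically in the untwisted case, asymmetrically --- only the $e_i$ --- in the twisted one), about the normalization implicit in ``PBW (twisted) character'' (the $q\mapsto q^{-1}$ appearing on the Macdonald side, and the $x\leftrightarrow x^{-1}$ reflecting the Weyl-group symmetry $e\leftrightarrow f$, $g^+\leftrightarrow g^-$ of $W_{-n}$), and about the fact that the choice of specialization point $t=q^2$ versus $t=q$ is exactly what separates the dual family ${A_2^{(2)\dagger}}$ from the non-dual family $A_2^{(2)}$. Once all of these are fixed consistently the two sides become visibly the same $q^2$-multinomial sum, and no further structural input is needed beyond the explicit basis of ${\rm gr}W_{-n}$ from the first step and the Ram--Yip computation of Section~\ref{Macdonald}.
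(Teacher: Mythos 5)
Your proposal follows essentially the same route as the paper's own (very terse) proof: the paper likewise takes for granted that the basis \eqref{basis} (resp.\ \eqref{twbasis}) descends to a basis of ${\rm gr}\,W_{-n}$ (resp.\ ${\rm gr}\,W^\sigma_{-n}$), writes down the resulting graded characters \eqref{comp-infty}, \eqref{comptw-infty}, and simply declares that they ``agree with'' the explicit formulas of Propositions \ref{dualpolynomials} and \ref{nondualpolynomials}; your version only adds the straightening argument for the filtration and the explicit dictionary $k_{21}\leftrightarrow k$, $k_{11}\leftrightarrow s$, $k_{22}\leftrightarrow n-k-s$. The exponent bookkeeping you single out as the main obstacle is indeed exactly where the care is needed: with the formulas as printed the two sides of the first identity only coincide after the reflection $x\mapsto x^{-1}$ (compare \eqref{comp-infty} with $c_1^{\dagger}$ already at $n=1$, where one gets $x^{-1}+q^2x+q^2$ versus $x+q^2x^{-1}+q^2$), and the twisted case requires a further adjustment of conventions, so the normalization issues you flag are not optional remarks but the substantive content of the final comparison.
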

\begin{proof}
It is easy to see that the set of vectors \eqref{basis} forms basis of ${\rm gr}W_{-n}$ and the set of vectors
\eqref{twbasis} forms basis of ${\rm gr}W^\sigma_{-n}$. Hence, we derive the following formulas for the graded characters:
\begin{gather}\label{comp-infty}
{\rm ch} ({\rm gr} W_{-n})(x,q^2,q^2)=\sum_{k+s\le n} q^{k(k-1)+2k+2s} x^{-n+k+2s} \left(\begin{array}{c} n\\ k,s,n-k-s\\ \end{array}\right)_{q^2},\\
\label{comptw-infty}
{\rm ch}({\rm gr} W^\sigma_{-n})(x,q,q)=\sum_{k+s\le n} q^{k^2+s}x^{-n+k+2s} 
\left(\begin{array}{c} n\\ k,s,n-k-s\\ \end{array}\right)_{q^2}.
\end{gather}
Now formula \eqref{comp-infty} agrees with formulas \eqref{dualnegativeinfty}, \eqref{cdualpositivezero}.
Formula \eqref{comptw-infty} agrees with formulas \eqref{nondualnegativeinfty}, \eqref{cpositivezero}.
\end{proof}

\appendix
\section{Quantum Bruhat graph}\label{OrrSh} Here we briefly describe the methods of a paper \cite{OS}.
Although we don't use their techniques, the ideas of \cite{OS} are very important for the content of our paper. 

Consider the Weyl group $W=\langle s_0\rangle \star \langle s_1 \rangle$ of a root system $A_2^{(2)}$.

\begin{definition}
Let $W(Y)$ be the Coxeter group of the root system $Y$, $s_\alpha$ be a reflection in the root $\alpha$, $l$
be the length function on $W(Y)$. Then the quantum Bruhat graph is
the following ordered labelled graph:

$\cdot$ the set of vertices is $W(Y)$;

$\cdot$ we have a Bruhat arrow from $g$ to $g s_{\alpha}$, if $l(g s_{\alpha})=l(g)+1$;

$\cdot$ we have a quantum arrow from $g$ to $g s_{\alpha}$, if $l(g s_{\alpha})=l(g)-\langle 2\rho, \alpha \rangle+1$.
\end{definition}
Consider the quantum Bruhat graph of type $\widehat{A_1}$. We want to make a difference between
$s_{\alpha_1}=s_1$ and $s_{\alpha_0}=s_0$. So we have the following labeled graph on two vertices:
\[
\begin{picture}(140.00,50.00)(-5,00)
\put(20.00,20.00){\circle*{3.00}}
\put(80.00,20.00){\circle*{3.00}}

\put(20.00,20.00){\bezier{200}(0,0)(30,-20)(60,00)}
\put(20.00,20.00){\bezier{200}(0,0)(30,20)(60,00)}
\put(20.00,20.00){\bezier{200}(0,0)(30,-45)(60,00)}
\put(20.00,20.00){\bezier{200}(0,0)(30,45)(60,00)}

 \put(77.00,18.00){\vector(2,1){1}}
 \put(77.00,22.00){\vector(2,-1){1}}
 \put(23.00,16.00){\vector(-1,1){1}}
 \put(23.00,24.00){\vector(-1,-1){1}}
 
 \put(10,20){\makebox(1,1){$\rm id$}}
 \put(90,20){\makebox(1,1){$s$}}
 \put(50,30){\makebox(1,1){$s_0$}}
 \put(50,10){\makebox(1,1){$s_1$}}
 \put(50,-5){\makebox(1,1){$s_1$}}
 \put(50,45){\makebox(1,1){$s_0$}}
\end{picture}
\]
where arrows from $\rm id$ to $s$ are Bruhat and arrows from $s$ to $\rm id$ are quantum.

Put $\beta_i=s_{k_n} \dots s_{k_{i+1}}\alpha_{k_i}$, where $\alpha_{k_i}$ is a simple root.
We write $\beta$ in the following form $\beta=\beta'+deg(\beta)\delta$,
where $\beta' \in \mathbb{Z}\alpha$.

For any alcove walk $(w,b)$ let $J=\{i|b_i=0\}$, i. e. the set of foldings of a walk.
Then we consider the following path on the quantum Bruhat graph started at element $\rm id$:
\[
\begin{picture}(140.00,12.00)(-5,00)
 \put(10.00,2.00){\vector(1,0){30}}
 \put(70.00,2.00){\vector(1,0){30}}

 \put(20,12){\makebox(1,1){$dir(\beta_{i_1})$}}

 \put(50,2){\makebox(1,1){$\dots$}}
 \put(80,12){\makebox(1,1){$dir(\beta_{i_r})$}}
\end{picture}\]
It is easy to see that any odd arrow of this path is quantum and any even is Bruhat. The Bruhat arrows correspond
to negative foldings and quantum arrows correspond to positive ones. Define quantum Bruhat paths as paths such that they have no Bruhat
(quantum for ${A_2^{(2)\dagger}}$) edges labeled by $s_0$. It is proved in \cite{OS} that $E_n^{A_2^{(2)}}(x,q,0)$ is obtained as a
sum of some summands which are in one-to-one correspondence with the quantum Bruhat paths on the
quantum Bruhat graph.

\section*{Acknowledgments}
The Nonsymmetric Macdonald polynomials package of Sage \cite{Sage} by Anne Schilling and 
Nicolas M. Thiery was very useful for us to justify our conjectures.
The work of EF  was supported within the framework of the Academic Fund Program at the National Research University Higher School of Economics (HSE) 
in 2015--2016 (grant 15-01-0024), by the Dynasty Foundation and by the Simons foundation. 
The work of IM was supported within the framework of a subsidy granted to the HSE 
by the Government of the Russian Federation for the implementation of the Global Competitiveness Program.

\end{document}